\documentclass[12pt,a4paper]{article}
\usepackage{amsmath,amsthm,amsfonts,amssymb,color,soul}
  \definecolor{darkgreen}{rgb}{0,0.4,0}
  \definecolor{darkred}{rgb}{0.7,0,0}

\usepackage{tikz-cd}

\usepackage{url}

\parskip=\medskipamount
\def\onehalf{{\textstyle\frac12}}
\def\oneqtr{{\textstyle\frac14}}

\def\ov#1{\overline{#1}}

\def\lie#1{{\mathcal L}_{#1}}

\def\R{{\mathbb R}}

\def\SODE{{\textsc{sode}}}
\def\SODEs{{\textsc{sode}s}}
\def\hook{{\mathchoice{\vrule height 0pt depth 0.4pt width 3pt
\vrule height 5pt depth 0.4pt \kern 3pt} {\vrule height 0pt depth
0.4pt width 3pt \vrule height 5pt depth 0.4pt \kern 3pt} {\vrule
height 0pt depth 0.2pt width 1.5pt \vrule height 3pt depth 0.2pt
width 0.2pt \kern 1pt} {\vrule height 0pt depth 0.2pt width 1.5pt
\vrule height 3pt depth 0.2pt width 0.2pt \kern 1pt} }}

\def\d{\mbox{d}}
\def\tGamma{{\tilde\Gamma}}
\def\bGamma{{\overline\Gamma}}
\def\hGamma{{\hat\Gamma}}

\theoremstyle{plain}
\newtheorem{thm}{Theorem}[section]
\newtheorem*{thm*}{Theorem}
\newtheorem{lem}[thm]{Lemma}
\newtheorem{lem*}[thm]{Lemma}
\newtheorem{propn}[thm]{Proposition}
\newtheorem*{propn*}{Proposition}
\newtheorem{cor}[thm]{Corollary}

\theoremstyle{definition}
\newtheorem{defn}[thm]{Definition}
\newtheorem{xmpl}{Example}

\newtheorem*{xmpl*}{Example}
\newtheorem*{appln*}{Application}
\newtheorem*{comment*}{Comment}
\newtheorem*{comments*}{Comments}

\newcommand{\pd}[2]{\frac{\partial#1}{\partial#2}}
\newcommand{\vf}[1]{\frac{\partial}{\partial #1}}

\newcommand{\tH}{\tilde{H}}
\newcommand{\tV}{\tilde{V}}
\newcommand{\tE}{\tilde{E}}
\newcommand{\tS}{\tilde{S}}
\newcommand{\tF}{\tilde{\mathcal F}}
\newcommand{\tlF}{\tilde{F}}
\newcommand{\tOmega}{\tilde{\Omega}}

\newcommand{\iE}{i_{\tE}}
\newcommand{\PF}{{\iE}_*}
\newcommand{\PB}{{\iE}^*}
\newcommand{\tpsi}{\tilde{\psi}}
\newcommand{\tth}{\tilde{\theta}}
\newcommand{\teta}{\tilde{\eta}}

\newcommand{\Sp}{\text{Sp}} 
\newcommand{\Img}{\text{im}}

\begin{document}

\title{Variational Inverse Problems for Second Order ODEs with and without Constraints}

\author{G.E.\,Prince, T.\,Mestdag and D.\,Mart\'{i}n de Diego} 
\date{}
\maketitle

\begin{quote}

{\bf Abstract.} 
{\small 
Many physical systems with or without nonholonomic constraints have a Lagrangian description. In the first case, the Lagrangian model can be represented by second-order ODEs that are constrained to a submanifold of velocities; in the latter case the ODEs are unconstrained. In this paper,  using geometric techniques, we  address the more  general inverse problem: ``When can a given constrained or unconstrained system of second order ODEs on a manifold be the representation of  a Lagrangian model?''. We show that the constrained case has many more ambiguities and complexities than its well-understood, unconstrained counterpart.\\

{\em MSC (2020):} {Primary 34A26, 37J06, 37J60 Secondary 53Z05, 70G45}

{\em Keywords:} {inverse problem, calculus of variations, Lagrangians, second order ordinary differential equations, nonholonomic mechanics, Cartan forms }}

\end{quote}


\section{Introduction and Summary of Results}

\subsection{The Inverse Problem in the Calculus of Variations}

 We began this work intending to apply the recent results on constrained systems by Prince, Farr\'{e} Puiggal\'{i}, Saunders and Mart\'{i}n de Diego 
 \cite{PPSD21} to the inverse problem 
 of finding Lagrangians for such systems. Along the way we discovered fresh features and utility in the geometric tools used in the standard, unconstrained 
 inverse problem in the calculus of 
 variations, see for example \cite{KP08,CPT84,MFLMR90}. As a result, this paper is concerned with the structure of the Cartan two-form of the calculus of variations 
 and its application to the variational inverse problems of both unconstrained and constrained systems of second order ordinary differential 
 equations, \SODEs\ for short. One of our goals is a foundation for the study of the inverse problem for nonholonomic systems akin to that of Crampin \cite{C81,C83,CPT84} for the unconstrained case.
 
In its twentieth century form the inverse problem in the calculus of variations is due to Jesse Douglas \cite{D41} who, in 1941, asked whether the 
solutions of a system of second order ODEs in normal form are exactly those of a system of Euler-Lagrange equations. His ambition was to classify all such 
\SODEs\ according to the uniqueness and existence of such regular Lagrangians. He gave a comprehensive  classification for a pair of equations. In her 2016 
PhD thesis \cite{Do16}, Thoan Do, gave us the most comprehensive outline of the solution for systems of $n$ equations, founded in large part on the results 
of numerous  workers over many decades (see \cite{KP08} for references and \cite{DP21} for a timeline). 

This inverse problem is formulated locally following Douglas and Sarlet \cite{S82}: 

  ``When are the solutions of
\[
\ddot{x}^a=F^a(t,x^b,\dot{x}^b)\ a,b=1,\dots,m
\] 
the solutions of 
\[
\frac{\partial^2L}{\partial \dot{x}^a \partial \dot{x}^b} \ddot{x}^b +\frac{\partial^2L}{\partial x^b 
\partial \dot{x}^a} \dot{x}^b + \frac{\partial^2L}{\partial t \partial \dot{x}^a } =\frac{\partial L}{\partial x^a}
\]
for some regular $L(t,x^a,\dot{x}^a)$?''

That is, do there exist non-singular  $g_{ab}(t,x^c,{\dot x}^c)$  (and $L$) so that 
\[
g_{ab} (\ddot{x}^b-F^b) \equiv \frac{d}{dt} \left(\frac{\partial L}{\partial \dot{x}^a}\right) 
-\frac{\partial L}{\partial x^a}?
\] 
Sarlet gives a linear differential-algebraic system of equations for the {\em multiplier}  $g_{ab},$ called the {\em Helmholtz conditions}.
 
 Geometric techniques have long been effective in the broad area of arbitrary but finite dimensional systems of second order equations. In these terms the question above has a positive answer if and only if there exists a certain maximal rank two-form on an odd-dimensional manifold 
 whose annihilator is a vector field representing the \SODEs\ \cite{CPT84}. We give this result with a new, self-contained proof in theorem \ref{CPT Thm(1)}. 
 To date the solution of Douglas's classification 
 problem in arbitrary dimension is most effectively obtained using Exterior Differential Systems theory (EDS), see \cite{AnTh92,A03,APST06,Do16,DP16,DP21}.
 
However, the calculus of variations is not appropriate to derive the equations of motion for systems with nonholonomic 
constraints because the constraints produce reaction forces \cite{Bloch03,Cortes2002,NeiFuf72}. That is, this motion is not described by classical variational principles. In these cases the corresponding equations of motion are derived by Lagrange-d’Alembert’s or Chetaev Principles. And so we enter new territory when we extend the inverse problem to \SODEs\ with first order constraints:
 
``When are the solutions of
\begin{equation*}
\ddot x^a = \tilde F^a (t, x^b,x^\beta, \dot x^b), \quad
\dot x^\alpha =\Psi^\alpha(t,x^b,x^\beta,\dot x^b)\quad a=1,\dots,m;\ \alpha=m+1,\dots,n
\end{equation*}
the solutions of the Lagrange-d'Alembert equations or Chetaev equations \cite{Bloch03}:
\begin{equation*}
\frac{d}{dt}\left(\pd{L}{\dot x^a}\right)-\pd{L}{x^a}=-\lambda_\alpha \pd{\Psi^\alpha}{\dot x^a},\qquad
\frac{d}{dt}\left(\pd{L}{\dot x^\alpha}\right)-\pd{L}{x^\alpha}=\lambda_\alpha,\qquad
\dot x^\alpha=\Psi^\alpha
\end{equation*}
for some Lagrangian $L(t,x^A,\dot{x}^B)$ and Lagrange multipliers $\lambda_\alpha?$'' (Here $A,B=1,\dots,n.$) 

What might appear to be a heavily restricted inverse problem actually admits greater multiplicity of solutions than the unconstrained one for reasons we 
indicate in the next subsection.
Moreover, the problem is of particular interest as it encompasses a framework more general than that of the inverse problem in the calculus of variations. In this context, given a \SODE\ on a submanifold, we examine whether it can be interpreted as the dynamics of a physical system, understood as one whose trajectories are governed by the Lagrange–d’Alembert principle.

Here is an outline of the paper. The main results include a new elaboration of the Cartan two-form in terms of the geometric structures 
(horizontal distributions, etcetera) associated with an arbitrary second order system; its use to give fresh insights and a new proof of the Crampin, 
Prince and Thompson theorem \cite{CPT84}; a geometric formulation of nonholonomic constraints, the Lagrange-d'Alembert equations and the corresponding inverse problem along with 
its regularity, Helmholtz conditions and existence theorem. There follows an examination of the existence of classical mechanical Lagrangians for constrained, unconstrained, 
and dissipative systems along with some examples from physics. Finally, we give a brief idea of future directions. \\ 

\subsection{Motivation}\label{motivation}
In the unconstrained inverse problem for a given \SODE\  we find {\em equivalent} Lagrangians, meaning that they all have the same Euler-Lagrange 
equations, equivalent to the \SODE. In the constrained inverse problem, for given constraints and a constrained \SODE, all the Lagrangians we find will 
have the same Lagrange-d'Alembert equations, however they may not have the same Euler-Lagrange field. That is, distinct unconstrained \SODEs\ may produce 
the same constrained equations. In physical terms, for fixed constraints different external forces may produce the same constrained dynamics, meaning that 
an unknown external force may not be able to be identified from observed constrained behaviour. From a mathematical perspective this non-uniqueness is to be expected and it creates an additional classification opportunity. 

Here is a physical illustration. Consider the free particle on $\R^3$ with the standard inner product:
\[
\ddot x=0,\ \ddot y=0,\ \ddot z=0.
\]
For the purpose of this introduction, we will restrict ourselves to finding physical or `magnetic' Lagrangians of the type
\begin{equation}\label{xmpl 1.1}
L=\onehalf({\dot x}^2+{\dot y}^2+{\dot z}^2)-V(x^B) + M_A(x^B){\dot x}^A.
\end{equation}
It is straightforward to show that the only such Lagrangians for this  free particle are $L=\onehalf({\dot x}^2+{\dot y}^2+{\dot z}^2)+\pd{M}{x^A}{\dot x^A},$ 
that 
is, the kinetic energy Lagrangian plus a total time derivative.\\
Now consider the same system, with Lagrangian $L=\onehalf({\dot x}^2+{\dot y}^2+{\dot z}^2),$ in the presence of the constraint $\dot z=x\dot y.$ The 
resulting Lagrange-d'Alembert equations for this so-called {\em nonholonomic free particle} are
\[
\ddot x=0,\ \ddot y=-\frac{x\dot x\dot y}{1+x^2},\ \dot z=-x\dot y,
\]
with initial conditions satisfying the contraint
(see e.g.\ \cite{Cortes2002}) and if we ask for other Lagrangians of the physical type above which produce these constrained equations from the same constraint, this time we find $V=0$ as 
before but   the vector potential term only has to satisfy 
\[
\pd{M_y}{x}-\pd{M_x}{y}=x\left(\pd{M_z}{x}-\pd{M_x}{z}\right).
\] 
For example, with $M_x=xy+z$ and $M_y,\ M_z$ constants we have the Lagrangian
\begin{equation}\label{xmpl 1.2}
L=\onehalf({\dot x}^2+{\dot y}^2+{\dot z}^2) + (xy+z){\dot x} + c_1\dot y+c_2\dot z. 
\end{equation}
with Euler-Lagrange equations  
\[
\ddot x=-x\dot y-\dot z,\ \ddot y=x\dot x,\ \ddot z=\dot x,
\]
which indicate a non-trivial force. Nonetheless, for the constraint $\dot z=x\dot y,$ both the kinetic energy Lagrangian and \eqref{xmpl 1.2} produce the same constrained equations via the Lagrange-d'Alembert equations (but with different Lagrange multipliers). So this non-uniqueness in the constrained inverse problem is not just a non-physical mathematical artifact.

 \section{Constrained and unconstrained SODEs}
 
We follow the geometric description of \SODEs\ in \cite{S89,KP08,PPSD21} and references therein. \\
We deal with a manifold $M^n$ with generic local coordinates $(x^A),$ often called the {\em configuration space},  and with
associated bundles $\pi:\R \times M \rightarrow M,$ $t:\R \times M \rightarrow
\R$ and $\pi^0_1:E \rightarrow \R \times M$. Almost all results are local with no underlying metric or topological assumptions unless made explicit. The {\em evolution space} $E:=\R \times TM$ has adapted coordinates $(t,x^A,\dot x^A)$ or $(t,x^A,u^A)$ and is identified with $J^1(\R,M)$, the bundle of 1-jets of maps $\R\to M$. 

The {\em contact (co-)distribution} on $J^1(\R,M)$ has local description
\[
\Omega^1(\R,M)=\Sp\{\theta^A:=dx^A-u^Adt\}.
\]
If $J^2(\R,M)$ has adapted coordinates $(t,x^A,u^A,v^A)$ or $(t,x^A,\dot x^A, \ddot{x}^A)$ then  its contact distribution is 
\[
\Omega^2(\R,M)=\Sp\{\theta^A:=dx^A-u^Adt,\varphi^A:=du^A-v^Adt\}.
\]

The important geometric feature being that the lifts of graphs (1-graphs) of functions $f:\R\to M$ lie in the integral submanifolds of $\Omega^1(\R,M),$ and similarly for $\Omega^2(\R,M).$

The vertical sub-bundle $V(E)$ consists of the vertical subspaces of the tangent spaces of $E$, at each point $p\in E$, that is, it is the kernel
of ${(\pi^0_1)}_{\ast}:T_pE\to T_{\pi^0_1(p)}(\R\times M).$ The {\em vertical distribution} on $E$ is generated by $ \{V_A:=\frac{\partial}{\partial u^A}\}.$

The vertical and contact structures on $E$ are combined in the {\em vertical endomorphism}
$$ S:=\theta^A\otimes V_A.$$
An intrinsic definition of $S$ can be found in \cite{C83, CPT84,S89}.

\subsection{Unconstrained SODEs}\label{uncontr sodes}

In this section we are dealing with a system of smooth second-order ordinary differential equations on $M,$
\begin{align}\label{SODE1}
\ddot x^A = F^A (t, x^B, \dot x^B), \ \ A, B = 1, \dots, n.
\end{align}

These equations are identified with the codimension $n$  embedded submanifold of $J^2(\R,M):$
\[
\mathcal F:=\{q\in J^2(\R,M):v^A(q)-F^A(\pi^1_2(q))=0\},
\]
where $\pi^1_2: J^2\pi\rightarrow J^1\pi$.
If $i_{\mathcal F}$ is the inclusion map $\mathcal F \hookrightarrow J^2(\R,M)$, then the contact distribution restricted to $\mathcal F$ is
$$
i_{\mathcal F}^\ast \Omega^2(\R,M)=\Sp\{\theta^A:=dx^A-u^Adt, \phi^B:=du^B-F^Bdt\}
$$
with annihilator generated by the  {\em second-order differential equation field} (\SODE),
\begin{equation}\label{SODE2}
\Gamma := \frac{\partial}{\partial t} + u^A
\frac{\partial}{\partial x^A} + F^B \frac{\partial}{\partial u^B}
\end{equation}
whose integral curves are the lifted graphs of solution curves to \eqref{SODE1}.

 The first order deformation $\mathcal{ L}_\Gamma S$ is of natural interest (see \cite{C83,CPT84}), it has eigenvalues $0, 1$ and $-1$  with corresponding eigenspaces $\Sp\{\Gamma\}$, the {\em vertical distribution} $V(E)=\Sp\{V_A\}$ and
 the {\em horizontal distribution} $H(E)=\Sp\{H_A\}$ respectively, where
 $$
H_A := \frac{\partial}{\partial x^A} -\Gamma^B_A\frac{\partial}{\partial u^B},\ \Gamma_A^B := -{\frac{1}{2}}\frac{\partial F^B}{\partial u^A}.
$$
The resulting direct sum decomposition of the tangent space of $E$ gives an adapted local basis $\{\Gamma, V_A, H_A\}$ with
dual basis $\{dt,\psi^A,\theta^A\}$ where
 $$
\psi^A:=\phi^A+\Gamma^A_B\theta^B.
 $$
 The corresponding projectors  are denoted ${P_\Gamma, P_V} $ and $P_H$.

The bracket relations will be important:
\[
[\Gamma, H_A]=\Gamma^B_AH_B + \Phi^B_AV_B, \quad [\Gamma,
V_A]=-H_A + \Gamma^B_AV_B, \quad [V_A,V_B]=0,
\]
\[
[H_A, V_B]=-\frac{1}{2}\left(\frac{\partial^2 F^C}{\partial u^A\partial
u^B}\right)V_C=V_B(\Gamma^C_A)V_C=V_A(\Gamma^C_B)V_C=[H_B, V_A],
\]
and
\[
[H_A,H_B]=R^C_{AB} V_C,
\]
where
\[
\Phi_B^A := -\frac{\partial F^A}{\partial x^B} -
\Gamma_B^C\Gamma_C^A - \Gamma(\Gamma_B^A).
\]
The {\em Jacobi endomorphism} is  $\Phi=P_V\circ\lie{\Gamma}P_H =\Phi^A_B\theta^B\otimes V_A,$
and the curvature of the `non-linear connection' is 
$$
R^C_{AB}:=\frac{1}{2}\left (\frac{\partial^2 F^C}{\partial
x^A\partial u^B} - \frac{\partial^2 F^C}{\partial x^B\partial u^A}
+\frac{1}{2}\left (\frac{\partial F^D}{\partial
u^A}\frac{\partial^2 F^C}{\partial u^D\partial u^B} -
\frac{\partial F^D}{\partial u^B}\frac{\partial^2 F^C}{\partial
u^D\partial u^A} \right )\right ).
$$
Further details can be found in \cite{PPSD21} and its references.

\subsection{Constrained SODEs}

We start with the constrained system on $E:=\R\times TM$, 
\begin{equation}\label{basic system}
\ddot x^a = \tilde F^a (t, x^b,x^\beta, \dot x^b), \quad
\dot x^\alpha =\Psi^\alpha(t,x^b,x^\beta,\dot x^b)
\end{equation}
with $a,b=1,\dots,m$ and $\alpha,\beta=m+1,\dots,n.$  

Our extended configuration space is $\R\times M$ with coordinates $(t,x^a,x^\alpha)$ and for technical reasons we give $M$ a product structure: $M:=M^m\times M^{n-m}.$ The evolution space, $E$,  has adapted
coordinates $(t,x^a,x^\alpha,u^a,u^\alpha)$; we will use the combined index $A=(a,\alpha)$ for compatibility with the unconstrained case. 

Recall from \cite{PPSD21} that  $\tF$ is the embedded submanifold of $J^2(\R,M)$ of co-dimension~$n$
\[
\tF:=\{q\in J^2(\R,M):v^a(q)-\tilde F^a(q)=0, u^\alpha(q)-\Psi^\alpha(q)=0\}
\]
with inclusion map $i_{\tF}:\tF \hookrightarrow J^2(\R,M).$ $i_{\tF}$ is a section of $\tilde\pi^1_2:\tF\to \tilde E$ where $\tE$ is the embedded constraint
submanifold
\[
\tE:=\{p\in E: G^\alpha(p):=u^\alpha(p)-\Psi^\alpha(p)=0\}
\]
with inclusion map $\iE:\tE \hookrightarrow J^1(\R,M)$ (we use $G^\alpha:=u^\alpha-\Psi^\alpha$ throughout).  On $\tE$ the {\em constrained SODE}
\begin{equation}\label{tGamma}
\tGamma=\vf{t}+u^a\vf{x^a}+\Psi^\alpha\vf{x^\alpha}+\tilde F^a\vf{u^a}
\end{equation}
represents the constrained system \eqref{basic system}.

We locate both the restricted contact distribution, $\Sp\{\tth^a:=dx^a-u^adt,\tth^\alpha:=dx^\alpha-\Psi^\alpha dt\},$ and $\tGamma$ on $\tilde E$ and use local coordinates $(t,x^A,u^a)$ on $\tilde E.$
Because of the embedding of $\tilde E$ into $E$, at every point $p\in \tilde E$
we have  $\Sp\{\tilde V_a:=\pd{}{u^a}\}
\subset T_p\tilde E$, and we denote the corresponding `vertical' sub-bundle $\tilde V(\tilde E)$.

Since we can identify $\tilde{E}={\mathbb R}\times TM^m\times M^{n-m}$,
the adapted vertical endomorphism is $\tilde{S}=\tth^a\otimes \tV_a$; it is tensorial under coordinate transformations respecting the product structure of $M.$
$\lie{\tGamma}\tilde{S}$ has eigenvalues $0,1$ and $-1$ as before but now the eigendistributions are respectively,
$\Sp\{\tGamma,\pd{}{x^\alpha}\}$, $\Sp\{\tV_a\}$ and $\Sp\{\tilde H_b\}$, where
$$
\tilde H_a:=\frac{\partial}{\partial x^a} -\tGamma^b_a\frac{\partial}{\partial u^b}-\Psi^\beta_a\pd{}{x^\beta}
$$
with
\[
\tGamma_a^b:= -{\frac{1}{2}}\frac{\partial \tilde F^b}{\partial u^a}\ \text{and}\ \Psi^\beta_a:=-\pd{\Psi^\beta}{u^a}.
\]
These eigendistributions give a direct sum decomposition of $T\tilde E$ with basis $\{\tGamma,\tH_a,\vf{x^\alpha},\vf{u^a}\}$.
The dual basis is $\{dt, \teta^\alpha,\tpsi^a, \tth^b\}$ where 
$$
\teta^\alpha:=\tth^\alpha+\Psi^\alpha_b\tth^b,\ \ \tpsi^a:=(\d u^a-\tilde{F}^a\d t) + \tGamma^a_b\tth^b.
$$
Note: We make minor notational amendments to \cite{PPSD21} putting a tilde on the contact forms, $\theta^a,\theta^\alpha,$ on the embedded constraint
manifold, $\tE,$ to distinguish them from those on $E$; putting a tilde on $K^\alpha_a$ below, and putting a tilde on $\eta^\alpha$ and using $\eta^\alpha$ 
(without tilde) for the contact form on the image of $\iE$ which pulls back to $\teta^\alpha$.

The non-zero brackets of the basis fields are (compare with the unconstrained case): 
\begin{align}
\notag [\tGamma,\tilde H_a]&=\tilde \Phi^b_a\tV_b+\tGamma^b_a\tilde H_b+\tilde{K}^\alpha_a\pd{}{x^\alpha},
&&[\tGamma, \tV_a]=-\tilde H_a+\tGamma^b_a\tV_b,\\
[\tilde H_a,\tV_b]&=\pd{\tGamma^c_a}{u^b}\tV_c+\pd{\Psi^\alpha_a}{u^b}\pd{}{x^\alpha} = [\tilde H_b,\tV_a],
&&[\tilde H_a,\tilde H_b]=\hat R^c_{ab}\tV_c+\check R^\beta_{ab}\pd{}{x^\beta},\label{basis brackets}\\
\notag[\tGamma,\pd{}{x^\alpha}]&=-\pd{\Psi^\beta}{x^\alpha}\pd{}{x^\beta}-\pd{\tilde F^c}{x^\alpha}\tV_c,
&&[\tilde H_a,\pd{}{x^\alpha}]=\pd{\tGamma^b_a}{x^\alpha}\tV_b+\pd{\Psi^\beta_a}{x^\alpha}\pd{}{x^\beta}
\end{align}
where
\begin{align*}
\tilde\Phi^b_a&:=-\pd{\tlF^b}{x^a}-\tGamma(\tGamma^b_a)-\tGamma^c_a\tGamma^b_c+\Psi^\alpha_a\pd{\tlF^b}{x^\alpha},\\
\tilde{K}^\alpha_a&:=-\tGamma(\Psi^\alpha_a)-\tilde H_a(\Psi^\alpha)+\tGamma^b_a\Psi^\alpha_b,\\
R^d_{ab}&:=\frac{1}{2}\left (\frac{\partial^2 \tlF^d}{\partial x^a\partial u^b} -
\frac{\partial^2 \tlF^d}{\partial x^b\partial u^a}+\frac{1}{2}\left (\frac{\partial \tlF^c}{\partial u^a}\frac{\partial^2 \tlF^d}{\partial u^c\partial u^b}
-\frac{\partial \tlF^c}{\partial u^b}\frac{\partial^2 \tlF^d}{\partial u^c\partial u^a} \right )\right ),\\
\hat R^c_{ab}&:=R^c_{ab}- \Psi^\beta_b \pd{\tGamma^c_a}{x^\beta}+\Psi^\beta_a\pd{\tGamma^c_b}{x^\beta},\\
\check R^\beta_{ab}&:=\left (\pd{\Psi^\beta_a}{x^b}-\pd{\Psi^\beta_b}{x^a}\right )+\left( \tGamma^c_a\pd{\Psi^\beta_b}{u^c}
-\tGamma^c_b\pd{\Psi^\beta_a}{u^c} \right) +\left( \Psi^\alpha_a \pd{\Psi^\beta_b}{x^\alpha} - \Psi^\alpha_b \pd{\Psi^\beta_a}{x^\alpha} \right).
\end{align*}

For our later use:
\begin{equation*}
\PF\vf{t}=\pd{}{t}+\pd{\Psi^\alpha}{t}\vf{u^\alpha},\
\PF\vf{x^A}=\vf{x^A}+\pd{\Psi^\alpha}{x^A}\vf{u^\alpha},\
\PF\vf{u^a}=\vf{u^a}+\pd{\Psi^\alpha}{u^a}\vf{u^\alpha},
\end{equation*}

so that
\begin{align*}
&\PF\tGamma=\vf{t}+u^a\vf{x^a}+\Psi^\alpha\vf{x^\alpha}+\tilde F^a\vf{u^a}+\tGamma(\Psi^\alpha)\vf{u^\alpha},\\
&\PF\tH_a=\vf{x^a}-\tGamma^b_a\vf{u^b}+\pd{\Psi^\beta}{u^a}\vf{x^\beta}+\tH_a(\Psi^\alpha)\vf{u^\alpha},\\
&\PF{X}=X+X(\Psi^\alpha)\vf{u^\alpha};
\end{align*}
and
\begin{align*}
&\PB\d t=\d t,\quad\ \PB\theta^A=\tth^A,\quad\ \PB\eta^\alpha=\teta^\alpha,\quad \PB\d u^a=\d u^a,\\
&\PB\d u^\alpha=\d\Psi^\alpha =\pd{\Psi^\alpha}{t}\d t+\pd{\Psi^\alpha}{x^A}\d x^A+\pd{\Psi^\alpha}{u^a}\d u^a,\quad\ \PB\d G^\alpha=0.
\end{align*}
Hence, for any function $K$ on $E$
\[
\PB\d K=\overline{\d K}(\text{mod}\ \d u^\alpha)+\overline{\pd{K}{u^\alpha}}\d\Psi^\alpha,
\]
where, here and elsewhere, the overline indicates evaluation on the constraint submanifold, $\tE$, of $E$ by replacement of all occurrences of $u^\alpha$
by $\Psi^\alpha,$ that is, $\overline{K}=\PB K.$ Here this means that
\[
\ov{\d K}(\text{mod}\ \d u^\alpha)=\ov{\pd{K}{t}}\d t+\ov{\pd{K}{x^A}}\d x^A+\ov{\pd{K}{u^a}}\d u^a
\]
can be regarded as a form on $E$ or $\tE$.

Finally, for each \SODE\ $\Gamma$ and given constraint functions, $G^\alpha:=u^\alpha-\Psi^\alpha$, there exists a unique constrained \SODE\ $\hGamma$ 
with the property $\PF\hGamma=\Gamma-\overline{\Gamma(G^\alpha)}V_\alpha$ on the image of $\iE$. Explicitly, if
\[ 
\Gamma=\frac{\partial}{\partial t} + u^A\frac{\partial}{\partial x^A} + F^B \frac{\partial}{\partial u^B}
\]
then
\begin{equation}\label{hGamma}
\hGamma=\vf{t}+u^a\vf{x^a}+\Psi^\alpha\vf{x^\alpha}+\overline F^a\vf{u^a}
\end{equation}
with
\[
\PF\hGamma=\vf{t}+u^a\vf{x^a}+\Psi^\alpha\vf{x^\alpha}+\overline F^a\vf{u^a}+\hGamma(\Psi^\alpha)\vf{u^\alpha}.
\]
Notice that the condition $\overline{\Gamma(G^\alpha)}=0$ for all $\alpha,$ that is, $\Gamma(G^\alpha$)=0 when $G^\alpha=0$ for all $\alpha,$ means that $\Gamma$ is tangent to $\Img(\iE)$, so that $\hGamma=\Gamma$ on this image.

{\bf Constraint types and their integrability}

We have chosen to follow \cite{PPSD21} in treating the constraints together with the second order equations, rather than first treating  
the geometry of constraints, for example, as in \cite{SCS99}. 
However, we can now discuss the role of the constraints in the curvatures displayed in the bracket relations \eqref{basis brackets}. In \cite{SCS99}, the authors show that generic constraints manifest as the co-distribution $\Sp\{\teta^\alpha\}$ on $\tE.$ Now, in mechanics constraints are typically affine in the $u^a$ and either holonomic (`integrable') or nonholonomic; so we will deal first with the Frobenius integrability of the constraint co-distribution and then with its possibly affine character.

The constraint co-distribution annihilates $\Sp\{\tGamma,\tH_a,\tV_b\}$ and the presence of the vertical generators mean that this distribution is independent of the choice of $\tGamma.$ The bracket relations \eqref{basis brackets} then indicate that the constraints are Frobenius integrable only if $\tilde{K}^\alpha_a,\frac{\partial^2\Psi^\alpha}{\partial{u^a}\partial{u^b}},\check R^\beta_{ab}$ are all zero. We see from the expressions for $\check R$ and $\tilde{K}$ above that a necessary but not sufficient condition for Frobenius integrability is that the constraints be affine in the $u^a.$ As a result, non-affine constraints are never integrable whereas affine constraints can be either. The question of the independence of the three conditions can be examined using the Jacobi identities on $\{\tGamma,\tH_a,\tV_b\}$; it will be important later that the $\check R=0$ condition is implied by the other two, hence these two alone imply integrable constraints.  We will generally be concerned here with non-integrable constraints both affine and nonlinear. On the matter of deciding integrability in any particular case, the simplest approach is to test the condition $\d\teta^\alpha=\xi^\alpha_\beta\wedge\teta^\beta$.

Since 
\begin{align*}
\teta^{\alpha}&=\d x^{\alpha}-\Psi^{\alpha}\d t-\frac{\partial\Psi^\alpha}{\partial u^b}(\d x^b-u^b\d t)\\
&=\d x^{\alpha}-\frac{\partial \Psi^\alpha}{\partial u^b}\d x^b-(\Psi^{\alpha}-\frac{\partial\Psi^\alpha}{\partial u^c}u^c)\d t,
\end{align*}
the  integrability condition   $\d\teta^\alpha=\xi^\alpha_\beta\wedge\teta^\beta$
gives us the following condition: 
\begin{align*}
\frac{\partial^2\Psi^\alpha}{\partial u^a\partial u^b}&=0 
\end{align*}
which immediately implies that the constraints are affine on the velocities, that is: 
\[
\Psi^{\alpha}(t, x^a, x^{\beta}, u^a)=A^{\alpha}_a(t, x^a, x^{\beta})u^a+B^{\alpha}(t, x^a, x^{\beta})
\]
so that 
\[
\teta^\alpha=\d x^{\alpha}-A^{\alpha}_a \d x^a-B^{\alpha}\d t.
\]
We obtain the following additional conditions  for the integrability of the distribution: 
\begin{align*}
\frac{\partial A^{\alpha}_a}{\partial x^{\beta}} A^{\beta}_b-\frac{\partial A^{\alpha}_b}{\partial x^{\beta}} A^{\beta}_a+\frac{\partial A^{\alpha}_a}{\partial x^b}-\frac{\partial A^{\alpha}_b}{\partial x^a}&=0\\
\frac{\partial A^\alpha_a}{\partial t}+\frac{\partial A^{\alpha}_a}{\partial x^{\beta}}B^\beta-
\frac{\partial B^\alpha}{\partial x^a}-\frac{\partial B^\alpha}{\partial x^\beta}A^\beta_a&=0
\end{align*}
An example where these conditions are satisfied is when the constraints are of the form
\[
\Psi^{\alpha}=\frac{\partial f^{\alpha}}{\partial x^a}(t, x^b)u^a+\frac{\partial f^{\alpha}}{\partial t}(t, x^b)
\]
where $f^{\alpha}: {\mathbb R}\times M^m\rightarrow {\mathbb R}$. These constraints are the total derivative of the holonomic constraints
\[
x^\alpha=f^\alpha(t, x^b)\; .
\]
But other solutions are possible, as for instance:
\[
\Psi(t, x^a, x^{\alpha}, \dot{x}^a)=k_1\left[e^{k_2\sum_{b=1}^m x^b}+k_3\sum_{\beta=m+1}^{n} x^{\alpha}\right](\sum_{a=1}^m \dot{x}^a )+e^{k_1k_3\sum_{b=1}^m x^b}
\]
assuming a single constraint.

\section{Variationality} \label{sec3}

The Euler-Lagrange equations in unconstrained settings are traditionally derived from the variational principle as the system of second order differential
equations in covariant form
\begin{equation}\label{trad EL}
\frac{d}{dt}\left(\pd{L}{\dot x^A}\right)-\pd{L}{x^A}=0,
\end{equation}
where $L$ is the Lagrangian, assumed regular (that is, the {\em Hessian} matrix with components $\frac{\partial^2L}{\partial \dot x^A\partial \dot x^B}$
is locally
invertible), and $\frac{d}{dt} $ is the {\em total time derivative} \cite{S89}
\[
\frac{d}{dt}:=\pd{}{t}+\dot x^B\pd{}{x^B}+\ddot x^B\pd{}{\dot x^B}.
\]
If $\Gamma_L$ is the \SODE\  corresponding to \eqref{trad EL} cast in normal form, then we have the {\em identity}
\begin{equation}\label{EL id}
\Gamma_L\left(\pd{L}{u^A}\right)-\pd{L}{x^A}=0.
\end{equation}
$\Gamma_L$ is called the {\em Euler-Lagrange field} of $L$.

A simple statement of the inverse problem of the calculus of variations is then 
\begin{quote}
 {\em ``For a given \SODE\  $\Gamma$ find all regular functions $L$
satisfying the identity \eqref{EL id}"}.
\end{quote}
The related (Douglas) classification problem is  
\begin{quote}
{\em ``Classify \SODEs\ according to the existence and uniqueness of such functions." }
\end{quote}

Now we turn to constrained variational systems described by the Lagrange-d'Alembert principle. Suppose that $L$ is a regular Lagrangian on the full space
$E,$ and that the system has first order constraints described earlier. Then the constrained motion is described by the Lagrange-d'Alembert equations \cite{Bloch03, Cortes2002, LMM1997,NeiFuf72,SCS95}

\begin{equation}\label{LAeqn}
\frac{d}{dt}\left(\overline{\pd{L}{\dot x^a}}\right)-\overline{\pd{L}{x^a}}=-\lambda_\alpha \pd{\Psi^\alpha}{\dot x^a},\qquad
\frac{d}{dt}\left(\overline{\pd{L}{\dot x^\alpha}}\right)-\overline{\pd{L}{x^\alpha}}=\lambda_\alpha,\qquad
\dot x^\alpha=\Psi^\alpha.
\end{equation}
The $\lambda_\alpha$ are called {\em Lagrange multipliers} and $\frac{d}{dt}$ is the total time derivative as before. 

Under a further regularity condition on the Lagrangian detailed later in proposition \ref{Prop2.1}, these equations can be described by a single vector field which encodes them in normal form. This {\em
Lagrange-d'Alembert field}, $\tGamma_L,$ on the constraint submanifold, $\tE,$ is
\begin{equation}\label{tGamma LdA}
\tGamma_L=\vf{t}+u^a\vf{x^a}+\Psi^\alpha\vf{x^\alpha}+\tilde F^a\vf{u^a},
\end{equation}
where the $\tilde F^a$ are the values of the `accelerations' derived from \eqref{LAeqn}. $\tGamma_L$ satisfies the identity
\begin{equation}\label{LdA id}
\tGamma_L\left(\overline{\pd{L}{u^a}}\right)-\overline{\pd{L}{x^a}}=-\lambda_\alpha \pd{\Psi^\alpha}{u^a},
\end{equation}
where the overline indicates evaluation on $\tE$ by replacement of all occurrences of $u^\alpha$ by
$\Psi^\alpha.$

Note that $\tGamma_L$ {\em is not} the restriction to the constraint submanifold of the Euler-Lagrange field of $L$ on the full space. In other words, the
system is not the geometric superposition of the unconstrained system and the constraints, rather the Lagrange-d'Alembert principle encodes the interaction
of the external forces with the resulting reaction forces to the constraint submanifold.

A simple statement of the constrained inverse problem of the calculus of variations,   in which {\em regular} refers to both types discussed above,  is then \begin{quote} 
 {\em ``For a given constrained \SODE\  $\tGamma$ given by \eqref{tGamma} find all regular functions $L$
satisfying the identity \eqref{LdA id} with Lagrange multipliers given by the second of \eqref{LAeqn}"}
\end{quote}
And, in light of our comments in section 
\ref{motivation}, the related classification problem is 
\begin{quote}
{\em ``Classify constraints and constrained \SODEs\ according to the uniqueness and existence of such functions {\bf and} their Euler-Lagrange fields''}.
\end{quote} 
We will address this inverse problem in detail but we do not give a Douglas-type classification.

\subsection{SODEs and Cartan forms}
\subsubsection{The unconstrained case}

The key constructs used in the geometric treatment of these inverse problems are the Cartan one-form and its exterior derivative, the Cartan two-form (see e.g. \cite{KKS10} for some historical notes).

\begin{defn}\label{Cartan form}
Suppose that $K: E={\mathbb R}\times TM\rightarrow {\mathbb R}$ is a smooth function with regular Hessian on (some open subset of) $E$. The one-form
\[ \theta_K:=\d K\circ S+K\d t \]
is called the {\em Cartan one-form} of $K$ and $\d\theta_K$ is the {\em Cartan two-form}, non-degenerate (that is, $\wedge^n(\d\theta_K)\neq 0$) by the
regularity assumption. 
\end{defn}
This regularity assumption is equivalent to saying that the pair $(\d \theta_K, dt)$ defines a cosympletic structure \cite{CaNiYu2013}.  Then the map
$\flat_K:TE\rightarrow T^*E$ defined by $\flat_K(X)=X\hook\d\theta_K+\d t(X)\d t$ is a vector bundle isomorphism.

Here are some important aspects of the relationship of Cartan two-forms with \SODEs\:

\begin{propn}\label{NP2.5}\
\begin{itemize}

\item[Part 1] Let $\d\theta_K$ be a non-degenerate Cartan 2-form on $E$. Then there exists a unique \SODE\  $\Gamma_K$ with $\Gamma_K\hook\d\theta_K=0.$
    $\Gamma_K$ is the Euler-Lagrange field of $K.$

\item[Part 2] Let $\d\theta_K$ be a non-degenerate Cartan 2-form on $E$. Then $\d\theta_K(V_A,\Gamma)=0$ for any \SODE\  $\Gamma$, not just $\Gamma_K$.

\item[Part 3] Let $\Omega$ be a closed maximal rank 2-form on $E$ with $\Omega(V_1,V_2)=0$ for any pair of vertical vector fields, and suppose that
    $\Omega(V_A,\Gamma)=0$ for any \SODE\  $\Gamma.$ Then locally $\Omega=\d\theta_K$ for some fixed $K$ independent of the choice of $\Gamma.$
\end{itemize}
\end{propn}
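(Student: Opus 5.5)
I will work throughout in the adapted coframe. From the definition, $\d K\circ S=\pd{K}{u^A}\theta^A$, so
\[
\theta_K=\pd{K}{u^A}\theta^A+K\,\d t .
\]
Differentiating gives
\[
\d\theta_K=\d\Big(\pd{K}{u^A}\Big)\wedge\theta^A+\pd{K}{u^A}\,\d\theta^A+\d K\wedge \d t .
\]
Using $\d\theta^A=-\d u^A\wedge \d t$, $\d K=\pd{K}{t}\d t+\pd{K}{x^A}\d x^A+\pd{K}{u^A}\d u^A$ and $\d x^A=\theta^A+u^A\d t$, the $\d u^A\wedge\d t$ terms cancel. We are left with
\[
\d\theta_K=\d\Big(\pd{K}{u^A}\Big)\wedge\theta^A+\pd{K}{x^A}\,\theta^A\wedge\d t .
\]
Let $g_{AB}:=\partial^2K/\partial u^A\partial u^B$. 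For an arbitrary \SODE\ $\Gamma$ with $\Gamma\hook\theta^A=0$ and $\Gamma\hook \d t=1$, expanding $\d(\pd{K}{u^A})$ in $\{\d t,\psi^B,\theta^B\}$ gives
\[
\d\theta_K=g_{AB}\,\psi^B\wedge\theta^A-\Big(\Gamma\big(\pd{K}{u^A}\big)-\pd{K}{x^A}\Big)\theta^A\wedge \d t+(\text{terms in }\theta^B\wedge\theta^A).
\]
This normal form drives all three parts.

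\emph{Part 2.} In the normal form, every term of $\d\theta_K$ contains a factor $\theta^A$ or $\d t$ paired with a $\theta$. Since $V_A\hook\theta^B=0$ and $V_A\hook \d t=0$, we get $V_A\hook\d\theta_K=g_{AB}\theta^B$ (up to sign). This vanishes on any \SODE\ $\Gamma$ because $\theta^B(\Gamma)=0$.

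\emph{Part 1.} Contracting the normal form with $\Gamma$ gives
\[
\Gamma\hook\d\theta_K=\Big(\Gamma\big(\pd{K}{u^A}\big)-\pd{K}{x^A}\Big)\theta^A+(\text{terms }g_{AB}\theta^A\text{ from the }\psi\text{ part}) .
\]
More carefully, I will write $\Gamma=\Gamma_0+(F^B-F_0^B)V_B$ relative to a fixed reference \SODE\ $\Gamma_0$ and compute $\Gamma\hook\d\theta_K$ directly in the coframe $\{\d t,\theta^A,\d u^A\}$. Writing $\ell_A:=\pd{K}{u^A}$, the result is
\[
\Gamma\hook\d\theta_K=\big(\Gamma(\ell_A)-\pd{K}{x^A}\big)\theta^A-\ell_{A}(\ldots)
\]
with no $\d u^A$ or $\d t$ components surviving, since their coefficients are $g_{AB}\theta^B(\Gamma)=0$ and the Lagrangian identity terms. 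The condition $\Gamma\hook\d\theta_K=0$ therefore reduces to
\[
g_{AB}F^B+u^B\frac{\partial^2K}{\partial x^B\partial u^A}+\frac{\partial^2K}{\partial t\partial u^A}-\pd{K}{x^A}=0,
\]
which is the identity \eqref{EL id}. By invertibility of $g$ this determines $F^B$ uniquely, so $\Gamma_K$ exists, is unique, and is the Euler-Lagrange field. Non-degeneracy also shows that the kernel of $\d\theta_K$ is one-dimensional and transverse to $\ker \d t$, so there are no non-\SODE\ solutions to worry about.

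\emph{Part 3.} I will use Poincaré to write $\Omega=\d\alpha$ locally, then adjust $\alpha$ by exact forms until $\alpha=\theta_K$. The hypotheses say $\Omega$ vanishes on vertical pairs and $V_A\hook\Omega\in\Sp\{\theta^B\}$ (the latter because $V_A\hook\Omega$ kills $\Gamma$ for every \SODE, and the \SODEs\ together with the verticals span everything). So $\Omega$ lies in the ideal generated by $\{\theta^A\}$ plus a $\d t\wedge\theta$ part, and contains no $\d u\wedge\d u$ or $\d u\wedge \d t$ terms.

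Write $\Omega=a_{AB}\,\d u^A\wedge\theta^B+b_{AB}\,\theta^A\wedge\theta^B+c_A\,\theta^A\wedge\d t$. Next, examine the $\d u\wedge\d u\wedge\cdot$ components of $\d\Omega=0$; this gives $\partial a_{AB}/\partial u^C$ symmetric in $A,C$. The step I expect to be the main obstacle is extracting the symmetry $a_{AB}=a_{BA}$. I will try to get it from the $\d u^A\wedge\d u^B\wedge\d t$ component of $\d\Omega=0$, where $\d\theta^B=-\d u^B\wedge\d t$ contributes $a_{AB}\,\d u^A\wedge\d u^B\wedge\d t$, forcing skew$(a)=0$.

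With both symmetries, the vertical Poincaré lemma (applied fibrewise with parameters) gives $a_{AB}=\partial^2K_0/\partial u^A\partial u^B$ for some $K_0$. Then $\Omega-\d\theta_{K_0}$ is closed and has no $\d u$ terms, so it is basic, $\d t\wedge\theta$- and $\theta\wedge\theta$-type, on $\R\times M$. Locally it equals $\d\beta$ with $\beta$ a one-form on $\R\times M$. Since $\beta=\beta_A\d x^A+\beta_0\d t=\beta_A\theta^A+(\beta_0+\beta_Au^A)\d t$ is exactly $\theta_{f}$ for the function $f=\beta_0+\beta_Au^A$, I set $K=K_0+f$. Maximal rank forces $a$, and hence the Hessian of $K$, to be non-singular. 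By construction $K$ does not depend on $\Gamma$.
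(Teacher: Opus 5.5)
Your Parts 1 and 2 are the paper's argument carried out in coordinates. Your normal form is the shape in Lemma~\ref{lemma 2}. It gives $V_C\hook\d\theta_K=g_{CA}\theta^A$ and $\Gamma\hook\d\theta_K=E_A(K)\theta^A$ for every \SODE\ $\Gamma$, which the paper gets by evaluating $\d\theta_K(\Gamma,X)$ on $X=\partial/\partial u^A$ and $X=\partial/\partial x^A$. In Part 1, delete the ``terms $g_{AB}\theta^A$ from the $\psi$ part'' and the ``$-\ell_A(\ldots)$''. These terms do not exist: with $\psi^B$ built from the same $\Gamma$ one has $\psi^B(\Gamma)=0$, and directly $\Gamma\hook(\d\ell_A\wedge\theta^A)=\Gamma(\ell_A)\theta^A$. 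It is exactly the identity $\Gamma\hook\d\theta_K=E_A(K)\theta^A$ that justifies your reduction to the linear system for $F^B$.

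Part 3 takes a genuinely different route from the paper.

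The paper stays intrinsic. It writes $\Omega=\d\omega$ and uses integrability of $V(E)$ to pass to $\omega_f=\omega-\d f$, which vanishes on verticals. It then \emph{defines} $K:=\omega_f(\Gamma)$, which is the same for all \SODEs\ since they differ vertically. Finally it uses $\Omega(\Gamma,V)=0$ and the eigenvalues of $\lie{\Gamma}S$ to get $\d K\circ S=\omega_f\circ P_H$, so $\theta_K=\omega_f$.

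You instead read off the shape $a_{AB}\d u^A\wedge\theta^B+b_{AB}\theta^A\wedge\theta^B+c_A\theta^A\wedge\d t$ and extract from $\d\Omega=0$ the two symmetries of $a$. Your $\d u\wedge\d u\wedge\d t$ and $\d u\wedge\d u\wedge\theta$ components are precisely the Helmholtz conditions $\d\Omega(\Gamma,V_A,V_B)=0$ and $\d\Omega(H_A,V_B,V_C)=0$. You then integrate fibrewise twice to get $K_0$. The remainder is closed and annihilated by verticals, hence basic, and you absorb it as $\theta_f$ with $f$ affine in $u$. These steps all go through.

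The paper's route is shorter, coordinate-free, and yields the recipe $K=\omega(\Gamma)-\Gamma(f)$ used after Theorem~\ref{CPT Thm(1)}. Yours is more elementary and shows exactly which closure components do the work. It also identifies the Hessian of $K$ as the coefficient $a$ in $V_A\hook\Omega=a_{AB}\theta^B$, and makes visible that $K$ is fixed only up to $\beta_0+\beta_Au^A$ with $\d\beta=0$, i.e.\ total time derivatives. One small slip: \SODEs\ and verticals do not span $TE$. The right reason for $V_A\hook\Omega\in\Sp\{\theta^B\}$ is that $\Sp\{\theta^B\}$ is the annihilator of $\Sp\{\Gamma,V_B\}$.

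Your last sentence is false and should be removed: maximal rank does not force $a$ to be non-singular. Any $w$ with $a_{AB}w^A=0$ gives a vertical vector $w^AV_A$ in the kernel of $\Omega$. Moreover, $\wedge^n\Omega$ contains, besides the $\det(a)$ term, terms in $\d t\wedge\theta^1\wedge\dots\wedge\theta^n$ built from $c_A$ and the $(n-1)$-minors of $a$.

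A concrete counterexample: take $n=1$ and $K=tu$. Then $\theta_K=t\,\d x$ and $\Omega=\d t\wedge\d x$. This satisfies every hypothesis of Part~3: it is closed, has rank $2=2n$, and $V\hook\Omega=0$. Yet every $K$ with $\d\theta_K=\Omega$ has zero Hessian.

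Part~3 as stated only asks for some $K$, and neither your argument nor the paper's uses maximal rank there, so nothing else is affected. Regularity needs an extra hypothesis excluding a vertical kernel, e.g.\ $\Gamma\hook\Omega=0$ for a \SODE\ as in Theorem~\ref{CPT Thm(1)}.
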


Note that in part 3 the condition ``$\Omega(V_A,\Gamma)=0$ for {\em any} $\Gamma$" is well-posed since any two \SODEs\ differ only in their vertical components and
$\Omega(V_1,V_2)=0.$

\begin{proof}\
\begin{itemize}

\item[{\em Part 1}] The non-degenerate $\d\theta_K$ has a one-dimensional null space since $E$ is odd-dimensional. 
We will show that there exists exactly one \SODE\ in this null space. Using the standard identity and 
$\theta_K=\d K\circ S+K\d t$, we have for all $X\in\mathfrak{X}(E)$ and any \SODE\ $\Gamma$,
    \begin{align*}
    \d\theta_K(\Gamma,X)&=\Gamma(\theta_K(X))-X(\theta_K(\Gamma))-\theta_K([\Gamma,X])\\
    &=\Gamma(\theta_K(X))-X(K)-\theta_K([\Gamma,X]).
    \end{align*}
It is enough to take $X=\vf{u^A}$ and $X=\vf{x^A}$: in the former case the right hand side of the above expression is zero; in the latter case the right
hand side is
\[
\Gamma\left(\pd{K}{u^A}\right)-\pd{K}{x^A}
\]
and we see from \eqref{EL id} that $\Gamma\hook\d\theta_K=0$ if and only if $\Gamma$ is the unique Euler-Lagrange field for regular `Lagrangian' $K$, that is $\Gamma=\Gamma_K$. Observe that $\Gamma=\flat_K^{-1}(\d t)$.

\item[{\em Part 2}] For any \SODE\ $\Gamma$ the standard formula gives
\[
    \d\theta_K(V_A,\Gamma)=V_A(\theta_K(\Gamma))-\Gamma(\theta_K(V_A))-\theta_K([V_A,\Gamma]),
\]
and since $\theta_K=\d K\circ S+K\d t,$ the last term cancels the first in view of the bracket relations of section \ref{uncontr sodes}, and the middle term, being zero, gives the result.\newline

\item[{\em Part 3}] Because $\Omega$ is closed with $\Omega(V_1,V_2)=0$  locally we have $\Omega=\d\omega$ and so
    $\d\omega(V_1,V_2)=0$ for any pair of vertical vectors. And because the vertical distribution is integrable there exists locally a function $f$ on
    $E$ with $\omega=\d f$ on $V(E)$, that is, $\omega(V)=V(f)$ for all vertical fields $V$. Set $\omega_f:=\omega -\d f$ so that $\Omega=\d\omega_f$ and
    $\omega_f(V)=0.$ Since the difference of any pair of \SODEs\ is vertical we can set $K:=\omega_f(\Gamma)$ for all $\Gamma$. We show that
    $\theta_K=\omega_f.$ 

 Now $\theta_K(\Gamma) =K = \omega_f(\Gamma)$ since $\theta_K=K\d t+\d K\circ S$.  For arbitrary $\Gamma$
\[
\d K=\d((\omega_f(\Gamma))=\lie{\Gamma}\omega_f-\Gamma\hook\d\omega_f=\lie{\Gamma}\omega_f - \Gamma\hook\Omega.
\]
Since $S$ is vertical-valued, the assumption $\Omega(\Gamma,V)=0$ gives for any $X$
\[
\d K\circ S(X)=\lie{\Gamma}\omega_f(S(X))=-\omega_f(\lie{\Gamma}(S(X)).
\]
Recalling that $\lie{\Gamma}S(H)=-H$ for any $\Gamma$-horizontal field $H$ gives $\d K\circ S=\omega_f$ on horizontal fields and zero otherwise. Hence
$\theta_K=\omega_f$ independent of the choice of $\Gamma.$

\end{itemize}
\end{proof}

\begin{comments*}
\item[1.] The statement of Part 1 of the this proposition opens the question ``For a given \SODE\ $\Gamma$ is there a regular $K$ with $\Gamma \hook
    \d\theta_K=0?$" And the proof of Part 1 indicates a connection between this question and the inverse problem of the calculus of variations which we will address
    shortly.
\item[2.] Note that while part 3 indicates that $K$ is independent of the choice of \SODE\ in this case, part 1 of the proposition shows that a
    specific $\Gamma_K$ is nonetheless distinguished.
\item[3.] We can rephrase the identity \eqref{EL id} for the Euler-Lagrange field, by stating
    that if $\Gamma \hook\d\theta_L=0$ then $\Gamma$ is the unique Euler-Lagrange field for a regular Lagrangian $L$ .
\end{comments*}

We now further elaborate the structure of Cartan 2-forms with and without constraints before moving to the Lagrange-d'Alembert equations and the constrained and unconstrained inverse problems. The next result is similar to lemma 4 in \cite{CPT84} for the Euler-Lagrange field.

\begin{lem}\label{lemma 1}
Let $\Gamma$ be any \SODE\  on $E$ with associated horizontal distribution $\Sp\{H_A\}$, and let $K$ be any twice differentiable function on $E$. Then 
\begin{equation}
V_A(H_B(K))-V_B(H_A(K))=\onehalf\left(V_B(E_A(K))-V_A(E_B(K))\right)
\end{equation}
where we introduce the notation
\[
E_A(K):=\Gamma\left(\pd{K}{u^A}\right)-\pd{K}{x^A}.
\]
\end{lem}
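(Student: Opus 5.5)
This is a direct coordinate computation. I would expand both sides in terms of partial derivatives of $K$ and the components $F^C$ of $\Gamma$, and check that they agree.

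\emph{Left-hand side.} Write $H_B(K)=\pd{K}{x^B}-\Gamma^C_B\pd{K}{u^C}$ and apply $V_A=\vf{u^A}$:
\[
V_A(H_B(K))=\frac{\partial^2 K}{\partial u^A\partial x^B}-V_A(\Gamma^C_B)\pd{K}{u^C}-\Gamma^C_B\frac{\partial^2K}{\partial u^A\partial u^C}.
\]
Since $V_A(\Gamma^C_B)=-\onehalf\frac{\partial^2F^C}{\partial u^A\partial u^B}$ is symmetric in $A,B$, the middle term drops out on antisymmetrising. Using $\Gamma^C_B=-\onehalf\pd{F^C}{u^B}$, the left-hand side becomes
\[
\frac{\partial^2 K}{\partial u^A\partial x^B}-\frac{\partial^2 K}{\partial u^B\partial x^A}
+\onehalf\pd{F^C}{u^B}\frac{\partial^2K}{\partial u^A\partial u^C}-\onehalf\pd{F^C}{u^A}\frac{\partial^2K}{\partial u^B\partial u^C}.
\]

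\emph{Right-hand side.} Compute $V_B(E_A(K))$ using the commutator $[V_B,\Gamma]=H_B-\Gamma^C_BV_C$, read off from the bracket relations of section \ref{uncontr sodes}:
\[
V_B\Gamma\Big(\pd{K}{u^A}\Big)=\Gamma\Big(\frac{\partial^2K}{\partial u^B\partial u^A}\Big)+H_B\Big(\pd{K}{u^A}\Big)-\Gamma^C_B\frac{\partial^2K}{\partial u^C\partial u^A}.
\]
Now antisymmetrise in $A,B$. The $\Gamma(\cdot)$ term is symmetric and cancels. Expanding $H_B(\pd{K}{u^A})$ gives $\frac{\partial^2K}{\partial x^B\partial u^A}-\Gamma^C_B\frac{\partial^2 K}{\partial u^C\partial u^A}$. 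Together with the explicit $-\Gamma^C_B$ term and the term $-\frac{\partial^2K}{\partial u^B\partial x^A}$ coming from $E_A$, this yields
\[
V_B(E_A(K))-V_A(E_B(K))=2\Big(\frac{\partial^2K}{\partial x^B\partial u^A}-\frac{\partial^2K}{\partial x^A\partial u^B}\Big)-2\Gamma^C_B\frac{\partial^2K}{\partial u^C\partial u^A}+2\Gamma^C_A\frac{\partial^2K}{\partial u^C\partial u^B}.
\]
Halving this and substituting $\Gamma^C_B=-\onehalf\pd{F^C}{u^B}$ reproduces the expression obtained for the left-hand side exactly.

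The only real obstacle is bookkeeping: getting the sign of $[V_B,\Gamma]$ right and tracking how the mixed second derivatives pair up under antisymmetrisation. I would double-check the computation by working fully in coordinates, without the bracket shortcut. In those terms $V_B\Gamma(\pd{K}{u^A})$ contains $\frac{\partial^2K}{\partial x^B\partial u^A}+\pd{F^C}{u^B}\frac{\partial^2K}{\partial u^C\partial u^A}$ plus terms symmetric in $A,B$, which gives the same result. Note also that $K$ must be taken $C^3$ (or the identity read distributionally), since third derivatives of $K$ appear before they cancel.
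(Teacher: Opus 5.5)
Your proof is correct but takes a different route from the paper. Both sides do reduce to $\frac{\partial^2K}{\partial u^A\partial x^B}-\frac{\partial^2K}{\partial u^B\partial x^A}-\Gamma^C_B\frac{\partial^2K}{\partial u^A\partial u^C}+\Gamma^C_A\frac{\partial^2K}{\partial u^B\partial u^C}$, and I checked each of your steps: the bracket $[V_B,\Gamma]=H_B-\Gamma^C_BV_C$, the cancellation of the symmetric terms, and the final substitution.

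The difference is that you expand and compare both sides, whereas the paper expands neither. It observes that $H_A=\onehalf\left([V_A,\Gamma]+\vf{x^A}\right)$, which gives the single identity $V_A(\Gamma(K))=2H_A(K)+E_A(K)$, that is, $H_A(K)+\onehalf E_A(K)=\onehalf V_A(\Gamma(K))$. Applying $V_B$ and antisymmetrising in $A,B$, the right-hand side drops out because $[V_A,V_B]=0$, and the lemma follows in two lines.

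The paper's route explains why the identity holds: $H_A(K)+\onehalf E_A(K)$ is a vertical gradient, so its vertical ``curl'' vanishes. It also needs neither the symmetry of $V_A(\Gamma^C_B)$ nor any tracking of mixed partials. Your route is longer and more error-prone, but it yields an explicit coordinate formula for $\d\theta_K(H_A,H_B)=V_B(H_A(K))-V_A(H_B(K))$, and it serves as an independent check.

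Your regularity remark is valid and applies equally to the paper's argument, which applies $V_BV_A$ to $\Gamma(K)$ and so also uses third derivatives of $K$. The hypothesis ``twice differentiable'' should really be $C^3$, which is covered by the smoothness assumed elsewhere in the paper.
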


\begin{proof}
Firstly, using the bracket relations we can write 
\[
H_A=\onehalf\left([V_A,\Gamma]+\vf{x^A}\right)
\]
so that
\begin{eqnarray*}
V_A(\Gamma(K))&=&[V_A,\Gamma](K)+\Gamma(V_A(K))\\
&=&2H_A(K)+E_A(K)\\
\implies V_B(H_A(K))&=&\onehalf(V_B(V_A(\Gamma(K)))-\onehalf V_B(E_A(K))\\
\implies V_A(H_B(K))-V_B(H_A(K))&=&\onehalf(V_B(E_A(K))-V_A(E_B(K))
\end{eqnarray*}
\end{proof}

\begin{lem}\label{lemma 2}
Let $\theta_K$ be any (regular) Cartan form and let $\Gamma$ be any \SODE\  on $E$, not necessarily the Euler-Lagrange field of $K,$ with associated
distributions $\Sp\{H_A\}$, $\Sp\{\psi^A\}$.  Then
\begin{equation}\label{Cartan form shape}
\d\theta_K=\beta_{AB}\theta^A\wedge\theta^B +E_A(K)\d t\wedge\theta^A + K_{AB}\psi^A\wedge\theta^B
\end{equation}
where $\beta_{AB}:=\oneqtr\left(V_A(E_B(K))-V_B(E_A(K))\right)$ and $K_{AB}:=V_AV_B(K)$.
\end{lem}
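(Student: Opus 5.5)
The plan is to compute $\d\theta_K$ component by component in the adapted frame $\{\Gamma,V_A,H_A\}$ of the chosen \SODE\ $\Gamma$. Its dual coframe is $\{\d t,\psi^A,\theta^A\}$, so a two-form is determined by its values on pairs of these frame fields. For each pair I will use the standard formula
\[
\d\theta_K(X,Y)=X(\theta_K(Y))-Y(\theta_K(X))-\theta_K([X,Y]).
\]
This needs the values of $\theta_K=\d K\circ S+K\d t$ on the frame. Since $S(\Gamma)=0$, $S(V_A)=0$ and $S(H_A)=V_A$, these are
\[
\theta_K(\Gamma)=K,\qquad \theta_K(V_A)=0,\qquad \theta_K(H_A)=V_A(K).
\]
It also needs the bracket relations of section \ref{uncontr sodes}.

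The pairs containing vertical fields are handled first.
\begin{itemize}
\item[(i)] $\d\theta_K(V_A,V_B)=0$, because $\theta_K(V)=0$ and $[V_A,V_B]=0$. This is why there is no $\psi\wedge\psi$ term.
\item[(ii)] $\d\theta_K(\Gamma,V_A)=0$ by Part 2 of Proposition \ref{NP2.5}. This is why there is no $\d t\wedge\psi$ term.
\item[(iii)] $\d\theta_K(V_A,H_B)=V_AV_B(K)=K_{AB}$. Here $\theta_K(V_A)=0$, and $[V_A,H_B]$ is vertical, so $\theta_K$ kills it. This matches the coefficient of $\psi^A\wedge\theta^B$ evaluated on $(V_A,H_B)$.
\end{itemize}

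Next, the pair $(\Gamma,H_A)$. We have
\[
\d\theta_K(\Gamma,H_A)=\Gamma(V_A K)-H_A(K)-\theta_K(\Gamma^B_AH_B+\Phi^B_AV_B).
\]
Substituting $H_A(K)=\pd{K}{x^A}-\Gamma^B_AV_B(K)$ makes the $\Gamma^B_AV_B(K)$ terms cancel, leaving $E_A(K)$. This is the coefficient of $\d t\wedge\theta^A$.

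The step I expect to need the most care is the pair $(H_A,H_B)$, which is the only place Lemma \ref{lemma 1} enters. We have
\[
\d\theta_K(H_A,H_B)=H_A(V_BK)-H_B(V_AK)-\theta_K(R^C_{AB}V_C),
\]
and the last term vanishes because $R^C_{AB}V_C$ is vertical. The expression $H_AV_B(K)-H_BV_A(K)$ is in the wrong order for Lemma \ref{lemma 1}, so I will commute it. Using $[H_A,V_B]=[H_B,V_A]$ from the bracket relations, the commutator terms cancel, and
\[
H_AV_B(K)-H_BV_A(K)=-\bigl(V_AH_B(K)-V_BH_A(K)\bigr).
\]
Lemma \ref{lemma 1} then gives
\[
\d\theta_K(H_A,H_B)=\onehalf\bigl(V_A(E_B(K))-V_B(E_A(K))\bigr)=2\beta_{AB}.
\]
This agrees with $\beta_{AB}\theta^A\wedge\theta^B$ evaluated on $(H_A,H_B)$, which is $\beta_{AB}-\beta_{BA}=2\beta_{AB}$ because $\beta$ is skew.

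Collecting all components gives \eqref{Cartan form shape}. Nothing in the argument used $\Gamma=\Gamma_K$, so the formula holds for any \SODE\ $\Gamma$, as claimed.
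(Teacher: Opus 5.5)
Your proposal is correct and follows the paper's proof essentially step for step. You evaluate $\d\theta_K$ on the adapted frame $\{\Gamma,V_A,H_A\}$ using the invariant formula and the bracket relations, and for the $(H_A,H_B)$ component you use $[H_A,V_B]=[H_B,V_A]$ to reorder the derivatives so that Lemma \ref{lemma 1} applies, which is exactly the paper's route (citing Part 2 of Proposition \ref{NP2.5} for $\d\theta_K(\Gamma,V_A)=0$ is the same calculation the paper does directly).
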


\begin{proof}
Use the identity for $\d\omega(X,Y),$ the bracket relations and $\theta_K:=\d K\circ S+ K\d t$ to establish that
\[
\d\theta_K(V_A,V_B)=0,\ \d\theta_K(\Gamma, V_A)=0,\d\theta_K(V_A,H_B)=K_{AB}.
\]
And
\begin{align*}
\d\theta_K(H_A,H_B)&=H_A(\theta_K(H_B)-H_B(\theta_K(H_A))-\theta_K([H_A,H_B])\\
&=H_A(V_B(K))-H_B(V_A(K))-0,\ \text{since}\ [H_A,H_B]\ \text{is vertical}\\
&=[H_A,V_B](K)+V_B(H_A(K)) - [H_B,V_A](K)-V_A(H_B(K))\\
&=V_B(H_A(K))-V_A(H_B(K)),\ \text{since}\ [H_A,V_B]=[H_B,V_A]\\
&=\onehalf\left(V_A(E_B(K))-V_B(E_A(K))\right),\ \text{by the lemma above}.
\end{align*}
Lastly,
\begin{align*}
\d\theta_K(\Gamma,H_A)&=\Gamma(V_A(K))-H_A(\theta_K(\Gamma))-\theta_K([\Gamma,H_A])\\
&=\Gamma(V_A(K))-H_A(K)-\theta_K(\Gamma^B_AH_B)\\
&=\Gamma(V_A(K))-\pd{K}{x^A}=E_A(K).
\end{align*}
\end{proof}

As a result of lemma \ref{lemma 2} and proposition \ref{NP2.5} part 3, we have
\begin{propn}\label{shape of Omega}
Let $\Omega$ be a closed maximal rank 2-form on $E$ with $\Omega(V_1,V_2)=0$ for any pair of vertical vector fields, and
    $\Omega(V_A,\Gamma)=0$ for a given \SODE\ $\Gamma$ with distributions $\Sp\{H_A\}, \Sp\{\psi^A\}.$ Then $\Omega$ has the form
\begin{equation}\label{Cartan form shape 2}
\Omega=\lambda_{AB}\theta^A\wedge\theta^B +\lambda_A\d t\wedge\theta^A + g_{AB}\psi^A\wedge\theta^B
\end{equation}
where $\lambda_{AB}:=\oneqtr\left(V_A(\lambda_B)-V_B(\lambda_A)\right)$ and $g$ is symmetric and non-degenerate.
\end{propn}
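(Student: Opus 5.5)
The plan is to reduce the statement to Lemma \ref{lemma 2} by way of Proposition \ref{NP2.5} Part 3, and then read off the remaining properties of the coefficients.

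\emph{Step 1: $\Omega$ is a Cartan two-form.} The hypotheses of Proposition \ref{NP2.5} Part 3 ask that $\Omega(V_A,\Gamma)=0$ for \emph{any} \SODE, whereas here it is assumed only for the given $\Gamma$. This costs nothing. Any other \SODE\ has the form $\Gamma'=\Gamma+V$ with $V$ vertical, so
\[
\Omega(V_A,\Gamma')=\Omega(V_A,\Gamma)+\Omega(V_A,V)=0,
\]
because $\Omega(V_1,V_2)=0$. Part 3 then gives, locally, $\Omega=\d\theta_K$ for some function $K$. Since $\Omega$ has maximal rank, $\d\theta_K$ is non-degenerate. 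I expect this to force the Hessian $K_{AB}=V_AV_B(K)$ to be non-singular. The check is to evaluate $\wedge^n\d\theta_K$ on the basis $\{\Gamma,V_A,H_A\}$: the only surviving contribution pairs each $\psi^A$ with a $\theta^B$, so it is proportional to $\det(K_{AB})$. Hence $K$ is regular and Lemma \ref{lemma 2} applies.

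\emph{Step 2: the shape of $\Omega$.} Apply Lemma \ref{lemma 2} with the given $\Gamma$ and its horizontal distribution. This yields
\[
\Omega=\beta_{AB}\theta^A\wedge\theta^B+E_A(K)\,\d t\wedge\theta^A+K_{AB}\psi^A\wedge\theta^B .
\]
Now set $\lambda_A:=E_A(K)$ and $g_{AB}:=K_{AB}$. With these choices $\lambda_{AB}=\oneqtr\left(V_A(\lambda_B)-V_B(\lambda_A)\right)$ is exactly the coefficient $\beta_{AB}$ of Lemma \ref{lemma 2}, so $\Omega$ has the form \eqref{Cartan form shape 2}. The tensor $g_{AB}=V_AV_B(K)$ is symmetric because vertical fields commute, $[V_A,V_B]=0$. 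It is non-degenerate by the rank computation in Step 1.

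\emph{Main obstacle.} The only place needing care is the non-degeneracy argument, namely confirming that maximal rank of $\Omega$ is equivalent to $\det(g_{AB})\neq0$. The cleanest route is to evaluate $\Omega$ on the pairs $(V_A,H_B)$, where $\Omega(V_A,H_B)=g_{AB}$, and to use $\Omega(V_A,V_B)=0$ and $\Omega(V_A,\Gamma)=0$. If $g$ were singular, some nonzero vertical field $V=c^AV_A$ would satisfy $\Omega(V,H_B)=0$ for every $B$, and hence $V\hook\Omega=0$. That vertical field would lie in the kernel alongside the one-dimensional kernel direction that exists on odd-dimensional $E$, and the kernel direction is transverse to the verticals because $\Omega$ has the form above with $\lambda_A$ paired with $\d t$. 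So the kernel would be at least two-dimensional, contradicting maximal rank. Hence $g$ is non-degenerate.
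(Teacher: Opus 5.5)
Your route to the shape of $\Omega$ is the paper's route. You reduce to Proposition \ref{NP2.5} Part 3 and read the coefficients off Lemma \ref{lemma 2}. Your extension of $\Omega(V_A,\Gamma)=0$ to all \SODEs\ via $\Omega(V_1,V_2)=0$ is exactly the remark following that proposition, and it is a tidier version of the paper's ``easily modified'' proof. You also do not need regularity of $K$ to invoke Lemma \ref{lemma 2}, since its proof never uses it. So the form \eqref{Cartan form shape 2}, with $g_{AB}=V_AV_B(K)$ symmetric, holds for any $K$.

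The genuine gap is the non-degeneracy of $g$, and both of your arguments for it fail. In Step 1, $\wedge^n\Omega$ is not just the $\psi\wedge\theta$ contribution. The term $\lambda_A\,\d t\wedge\theta^A$ contributes $n\,\lambda_A\,\d t\wedge\theta^A\wedge(g_{BC}\psi^B\wedge\theta^C)^{\wedge(n-1)}$, which can be non-zero when $\det g=0$. Evaluating on $(V_1,\dots,V_n,H_1,\dots,H_n)$ isolates only one component of $\wedge^n\Omega$. In the kernel argument, the assertion that the kernel direction is transverse to $V(E)$ is precisely what needs proof. If $g$ is singular, the one-dimensional kernel is simply spanned by the vertical field you constructed, and no contradiction arises.

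In fact this part of the statement cannot be derived from the stated hypotheses. Take $n=1$ and $\Omega=\d t\wedge\d x=\d\theta_K$ with $K=-x$. It is closed, has maximal rank $2$ on the three-dimensional $E$, and satisfies $\Omega(V_1,\Gamma)=0$ for every \SODE\ $\Gamma$. Yet $\Omega=\d t\wedge\theta^1$, so $\lambda_1=1$, $g_{11}=\Omega(V_1,H_1)=0$, and $\ker\Omega=\Sp\{\vf{u}\}$. For general $n$, $K=-x^1+\onehalf\sum_{A\geq 2}(u^A)^2$ gives $\d\theta_K=\d t\wedge\d x^1+\sum_{A\geq 2}\d u^A\wedge\theta^A$, again of maximal rank with singular Hessian.

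The paper's one-line proof does not supply this step either. It applies Lemma \ref{lemma 2}, stated for regular Cartan forms, to the $K$ produced by Part 3, tacitly reading maximal rank of $\d\theta_K$ as regularity of $K$. Only the implication from regularity to maximal rank holds. What is true is that $\det g\neq 0$ if and only if $\ker\Omega\cap V(E)=0$, so you need an extra hypothesis placing the kernel off the verticals. One option is $\Gamma\hook\Omega=0$, as in Theorem \ref{CPT Thm(1)}. Then $\lambda_A=0$, your determinant computation becomes correct, and your kernel argument goes through because $\ker\Omega=\Sp\{\Gamma\}$ with $\d t(\Gamma)=1$. The other is to assume directly that the matrix $\Omega(V_A,H_B)=V_AV_B(K)$ is non-singular.
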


\begin{proof}
The proof of proposition \ref{NP2.5} part 3 is easily modified for a particular choice of $\Gamma$, the result being that the given $\Omega$ is a Cartan two-form,
$\d\theta_K$, for $K=\omega_f(\Gamma).$ Then lemma \ref{lemma 2} establishes the stated structure of $\Omega.$
\end{proof}

The expressions \eqref{Cartan form shape}, \eqref{Cartan form shape 2} provide an important model for the two-forms required in our inverse problems. 
Specifically, in the presence of a \SODE\ $\Gamma$ on $E$ any Cartan two-form will have the structure \eqref{Cartan form shape}. Conversely, in looking for 
Cartan two-forms with a specific relationship with $\Gamma$  we must restrict ourselves to two-forms with the structure \eqref{Cartan form shape 2}. The 
closure conditions on our candidates will be the generalised Helmholtz conditions, for example, the usual ones if $\Gamma$ must generate the kernel of the 
Cartan two-form we seek. 

For completeness and later use we give the closure (Helmholtz) conditions on the forms \eqref{Cartan form shape 2}:  
\begin{propn}\label{Omega HH}
The closure conditions on the two-forms of proposition \ref{shape of Omega} are:
\begin{align*}
&0=\d\Omega (\Gamma, V_A, V_B) = g_{AB}-g_{BA},\\
&0=\d \Omega (\Gamma, V_A, H_B) = \Gamma(g_{AB})-\Gamma^C_A g_{CB}-\Gamma^C_B g_{CA}-\onehalf(V_A(\lambda_B)+V_B(\lambda_A)),\\
&0=\d\Omega (\Gamma, H_A, H_B) = \Phi^C_B g_{CA}-\Phi^C_Ag_{CB}+2\Gamma(\lambda_{AB})+2\Gamma^C_A\lambda_{BC}-2\Gamma^C_B\lambda_{AC}+H_B(\lambda_A)-H_A(\lambda_B),\\
&0=\d \Omega (H_A, V_B, V_C) = V_C(g_{BA})-V_B(g_{CA}),\\
&0=\d\Omega(V_A,H_B,H_C) = 2V_A(\lambda_{BC})+H_C(g_{AB})-H_B(g_{AC})+V_A(\Gamma^D_B)g_{DC}-V_A(\Gamma^D_C)g_{DB},\\
&0=\d\Omega(H_A,H_B,H_C) = 2H_A(\lambda_{BC})+2H_C(\lambda_{AB})+2H_B(\lambda_{CA})-R^D_{BC}g_{DA}-R^D_{AB}g_{DC}-R^D_{CA}g_{DB}.
\end{align*}
The last two closure conditions are consequences of the others (see the appendix).
\end{propn}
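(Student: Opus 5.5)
The plan is to compute $\d\Omega$ evaluated on triples of the adapted basis $\{\Gamma,V_A,H_A\}$. For each triple I use the invariant formula
\[
\d\Omega(X,Y,Z)=X(\Omega(Y,Z))-Y(\Omega(X,Z))+Z(\Omega(X,Y))-\Omega([X,Y],Z)+\Omega([X,Z],Y)-\Omega([Y,Z],X).
\]
From \eqref{Cartan form shape 2} and the dual basis $\{\d t,\psi^A,\theta^A\}$ the pairings are
\[
\Omega(V_A,V_B)=0,\quad \Omega(\Gamma,V_A)=0,\quad \Omega(V_A,H_B)=g_{AB},\quad \Omega(\Gamma,H_A)=\lambda_A,\quad \Omega(H_A,H_B)=2\lambda_{AB}.
\]
The brackets are those listed in section \ref{uncontr sodes}. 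Since $\Omega$ is a three-form's antiderivative only on the ten possible type-classes of triples, I organise the work by type.

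The triples $(\Gamma,V,V)$, $(V,V,V)$, $(V,V,H)$ and $(\Gamma,V,H)$ are the easy ones.
\begin{itemize}
\item $(V,V,V)$ vanishes identically, because $\Omega$ vanishes on vertical pairs and $[V_A,V_B]=0$.
\item For $(\Gamma,V_A,V_B)$, only the bracket terms survive. Using $[\Gamma,V_A]=-H_A+\Gamma^C_AV_C$, these give $g_{AB}-g_{BA}$.
\item The triple $(H_A,V_B,V_C)$ gives $V_C(g_{BA})-V_B(g_{CA})$. Here the contributions from $[H_A,V_B]$ are vertical and pair to zero with $V$.
\item For $(\Gamma,V_A,H_B)$, the terms are $\Gamma(g_{AB})$ and $-V_A(\lambda_B)$, together with the bracket terms $-\Gamma^C_Ag_{CB}$ (from $[\Gamma,V_A]$), $-\Gamma^C_Bg_{AC}$ (from $[\Gamma,H_B]$), and $\Omega([V_A,H_B],\Gamma)=0$.
\end{itemize}
Using the symmetry of $g$, I then rewrite $V_A(\lambda_B)=\onehalf(V_A\lambda_B+V_B\lambda_A)+2\lambda_{AB}$. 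One must also check that the $\lambda_{AB}$ term cancels against the contribution $\Omega(H_A,H_B)$ coming from $[\Gamma,V_A]\ni -H_A$. This yields the stated symmetric expression.

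For $(\Gamma,H_A,H_B)$ I substitute $[\Gamma,H_A]=\Gamma^C_AH_C+\Phi^C_AV_C$ and $[H_A,H_B]=R^C_{AB}V_C$. The last bracket pairs with $\Gamma$ to zero, which leaves exactly the displayed $\Phi$, $\Gamma^C$ and $H(\lambda)$ terms. The formulas for $(V_A,H_B,H_C)$ and $(H_A,H_B,H_C)$ come out the same way, using $[H_A,V_B]=V_B(\Gamma^C_A)V_C$ and the curvature bracket. Since the statement lists only six equations, I will also confirm that the remaining two classes vanish trivially. These are $(\Gamma,\Gamma,\cdot)$ by antisymmetry, and $(\Gamma,V,V)$-type mixes with repeated entries. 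Equivalently, the six types listed are exhaustive.

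The main obstacle is the final claim, that the last two conditions follow from the first four. My approach would be to use $\d^2=0$ indirectly, in the following way. Set $\Xi:=\d\Omega$, and assume the first four components vanish, so that $\Xi$ restricted to these types is zero. Then $\d\Xi=0$, and I evaluate the four-form $\d\Xi$ on $(\Gamma,V_A,H_B,H_C)$ and on $(\Gamma,H_A,H_B,H_C)$. With the known vanishing components, the expansion reduces to
\[
0=\d\Xi(\Gamma,V_A,H_B,H_C)=\Gamma(\Xi(V_A,H_B,H_C))+(\text{terms linear in }\Xi(V,H,H)\text{ and }\Xi(H,H,H)).
\]
This gives a linear system of ODEs along $\Gamma$ for the remaining components. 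However, that alone gives vanishing only along integral curves with zero initial data, so it is not enough.

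I would therefore instead try a direct route. Write $\Xi(V_A,H_B,H_C)$ and apply the Jacobi identity to $\Gamma,V_A,H_B$: the relation $H_B=\onehalf([V_B,\Gamma]+\partial_{x^B})$ lets me express $H_C(g_{AB})-H_B(g_{AC})$ via $V$-derivatives of the $(\Gamma,V,H)$ condition. Similarly, $2V_A(\lambda_{BC})$ is expressed through the $V$-derivatives of $\lambda$. Substituting the second condition differentiated by $V$, together with the fourth condition, should make the fifth expression collapse. The sixth then follows by applying $V_D$ to it and using the fifth together with the Bianchi-type identity for $R$. Getting the bookkeeping of these cancellations right is where I expect the real work to be; this is presumably what the appendix does.
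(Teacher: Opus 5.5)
Your evaluation of the six components is sound. With $\Omega(V_A,H_B)=g_{AB}$, $\Omega(\Gamma,H_A)=\lambda_A$, $\Omega(H_A,H_B)=2\lambda_{AB}$ and the bracket relations, each formula drops out. (For $(H_A,V_B,V_C)$ the invariant formula actually gives the negative of the displayed expression, which is harmless.) The paper treats this part as routine.

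The genuine gap is the redundancy claim. Your ``direct route'' through Jacobi identities and $V$-derivatives of the second condition is only a hope that cancellations will occur. Its last step cannot work as stated: applying $V_D$ to $\d\Omega(H_A,H_B,H_C)$ can at most show that this component is independent of the velocities, never that it vanishes. The operator that raises $(V,V,H)$-components to $(V,H,H)$-components, and $(V,H,H)$ to $(H,H,H)$, is $\Gamma$, because $[\Gamma,V_A]=-H_A+\Gamma^D_AV_D$.

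The missing idea, which is what the appendix uses, is your discarded $\d^2=0$ idea applied in the right order. The first three conditions say $\Gamma\hook\d\Omega=0$, hence $\lie{\Gamma}\d\Omega=\d(\Gamma\hook\d\Omega)=0$.

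First apply this to the component $\d\Omega(H_A,V_B,V_C)$, i.e.\ evaluate $\d(\d\Omega)$ on $(\Gamma,V_B,V_C,H_A)$ rather than on $(\Gamma,V_A,H_B,H_C)$. That component vanishes identically by the fourth condition, so its $\Gamma$-derivative is zero and there is no transport equation at all. The bracket terms then give the purely algebraic relation $\d\Omega(H_A,H_B,V_C)=\d\Omega(H_A,H_C,V_B)$. Thus $T_{ABC}:=\d\Omega(H_A,H_B,V_C)$ is antisymmetric in $A,B$ and symmetric in $B,C$, which forces $T=0$.

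Only then use your quadruple $(\Gamma,V_A,H_B,H_C)$. Now the differentiated component $\d\Omega(V_A,H_B,H_C)$ is already known to vanish, and the relation collapses to $\d\Omega(H_A,H_B,H_C)=0$. The ODE obstruction you ran into arose only because you started from components not yet known to vanish.
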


In the next section we examine Cartan forms in the presence of constraints.

\subsubsection{The constrained case}\label{sect3.1.2}

We now examine the structure of $\d\PB\theta_K$ for an 
arbitrary, regular Cartan 1-form $\theta_K$ of definition \ref{Cartan form} and arbitrary constrained dynamics, $(\tGamma,G^\alpha)$. We also use the abbreviations 
$\tth_K:=\PB\theta_K$ and $\tth_{\bar{K}}:=\PB\theta_{\bar{K}}.$ In coordinates, $\theta_K=K\d t+\pd{K}{u^A}\theta^A$ with pull-back 
\begin{equation}
\tilde{\theta}_K:=\PB\theta_K=\bar{K}\d t+\left(\overline{\pd{K}{u^a}}+\pd{\Psi^\beta}{u^a}\overline{\pd{K}{u^\beta}}\right)\tth^a
+\overline{\pd{K}{u^\beta}}\teta^\beta,
\end{equation}
with the overline indicating pullback by $\PB$. Use of the chain rule:

\[
\pd{\overline K}{w}=\overline{\pd{K}{w}}+\overline{\pd{K}{u^\alpha}}\pd{\Psi^\alpha}{w},
\]
($w$ stands for any of $t,x^A,u^a$) gives
\begin{equation}\label{PBTheta_K}
\tilde{\theta}_K=\bar{K}\d t+\pd{\bar K}{u^a}\tth^a+\overline{\pd{K}{u^\beta}}\teta^\beta
= \bar{K}\d t+\d\bar{K}\circ\tS+\overline{\pd{K}{u^\beta}}\eta^\beta
\end{equation}
where $\tS=\tth^a\otimes\tV_a$ is the vertical endomorphism on $\tE.$ Notice that $\PB\theta_{\hat K}=\PB\theta_K$ where $\hat K=K+W\circ G^\alpha$ 
with $W'(0)=0;$ this will be important in our later discussion of uniqueness of Lagrangians. Notice also that $\bar{K}$ can be regarded as a singular Lagrangian on $E$ with $\PB\theta_{\bar{K}}=\tth_{\bar{K}}$ which will become important below when, in addition, $\pd{\bar{K}}{x^\alpha}=0$.

Following \eqref{PBTheta_K} we have, $\tilde{\theta}_K=\tilde{\theta}_{\bar{K}} +\overline{\pd{K}{u^\beta}}\eta^\beta,$ and  after some calculation,
\begin{equation*}
\d\tth_K=\d\tth_{\bar{K}}+\d\left(\overline{\pd{K}{u^\alpha}}\right)\wedge\teta^\alpha+\left(\overline{\pd{K}{u^\alpha}}\right)\d\teta^\alpha
\end{equation*}
with non-zero components in the presence of an arbitrary constrained \SODE\ $\tGamma$
\begin{align}
&\d\tth_K(\tGamma, \tH_a)=\d\tth_{\bar{K}}(\tGamma, \tH_a)-\overline{\pd{K}{u^\alpha}}\tilde{K}^\alpha_a,\\ 
&\d\tth_K(\tGamma, \vf{x^\alpha})=\d\tth_{\bar{K}}(\tGamma, \vf{x^\alpha})+\tGamma\left(\overline{\pd{K}{u^\alpha}}\right)+\overline{\pd{K}{u^\beta}}\pd{\Psi^\beta}{x^\alpha}, \\
&\d\tth_K(\tH_a,\tH_b)=\d\tth_{\bar{K}}(\tH_a,\tH_b)-\overline{\pd{K}{u^\alpha}}{\check R}^\alpha_{ab},\\ 
&\d\tth_K(\tH_a,\tV_b)=\d\tth_{\bar{K}}(\tH_a,\tV_b)+\overline{\pd{K}{u^\alpha}}\frac{\partial^2\Psi^\alpha}{\partial u^a\partial u^b},\label{kAB}\\
&\d\tth_K(\tH_a,\vf{x^\alpha})=\d\tth_{\bar{K}}(\tH_a,\vf{x^\alpha})+\tH_a\left(\overline{\pd{K}{u^\alpha}}\right)+\overline{\pd{K}{u^\beta}}\frac{\partial^2\Psi^\beta}{\partial u^a\partial x^\alpha}, \\
&\d\tth_K(\tV_a,\vf{x^\alpha})=\tV_a\left(\overline{\pd{K}{u^\alpha}}\right).
\end{align}

Furthermore,
\begin{align}
\d\tth_{\bar{K}}=&\left(\tGamma\left(\pd{\bar K}{u^a}\right)-\pd{\bar K}{x^a}-\pd{\Psi^\beta}{u^a}\pd{\bar K}{x^\beta}\right)\d t\wedge\tth^a
-\pd{\bar K}{x^\alpha}\d t\wedge\teta^\alpha\notag \\
&+\frac{\partial^2\bar{K}}{\partial u^a\partial u^b}\tpsi^a\wedge\tth^b -\frac{\partial^2 \bar{K}}{\partial x^\alpha\partial u^a}\tth^a\wedge\teta^\alpha
+\onehalf\left(\tH_a\left(\pd{\bar K}{u^b}\right)-\tH_b\left(\pd{\bar K}{u^a}\right) \right)\tth^a\wedge\tth^b.
\end{align}
These expressions indicate that for particular types of constraints there are specific Cartan 1-forms of interest.  For example, when the (affine) constraints are integrable, so that $\tilde{K}^\alpha_a=0$ and $\check R=0$ in the expressions above. More interestingly, in the non-integrable case with non-affine constraints and with $\tGamma, \Psi$, satisfying $\tilde{K}^\alpha_a=0$ and $\check R\neq 0$ (recall that affine constraints with $\tilde{K}^\alpha_a=0$ are integrable),
then all Cartan one-forms $\theta_K$ with $\pd{\bar K}{x^\alpha}=0$ satisfy
\begin{align}\label{bar K1}
&\d\tth_K(\tGamma, \tH_a)=\d\tth_{\bar{K}}(\tGamma, \tH_a),\\ 
&\d\tth_K(\tGamma, \vf{x^\alpha})=\tGamma\left(\overline{\pd{K}{u^\alpha}}\right)-\overline{\pd{K}{x^\alpha}}, \\
&\d\tth_K(\tH_a,\tH_b)=\d\tth_{\bar{K}}(\tH_a,\tH_b)-\overline{\pd{K}{u^\alpha}}{\check R}^\alpha_{ab},\\ 
&\d\tth_K(\tH_a,\tV_b)=\d\tth_{\bar{K}}(\tH_a,\tV_b)+\overline{\pd{K}{u^\alpha}}\frac{\partial^2\Psi^\alpha}{\partial u^a\partial u^b},\\
&\d\tth_K(\tH_a,\vf{x^\alpha})=\tH_a\left(\overline{\pd{K}{u^\alpha}}\right)+\overline{\pd{K}{u^\beta}}\frac{\partial^2\Psi^\beta}{\partial u^a\partial x^\alpha}, \\
&\d\tth_K(\tV_a,\vf{x^\alpha})=\tV_a\left(\overline{\pd{K}{u^\alpha}}\right),
\end{align}
with
\begin{equation}\label{bar K2}
\d\tth_{\bar{K}}=\tilde{E}_a(\bar{K})\d t\wedge\tth^a
+\frac{\partial^2\bar{K}}{\partial u^a\partial u^b}\tpsi^a\wedge\tth^b +\oneqtr\left(\tV_a\left(\tilde{E}_a({\bar K})\right)-\tV_b\left(\tilde{E}_b({\bar K})\right)\right)\tth^a\wedge\tth^b,
\end{equation}
where we have used the identity $\tH_a=\onehalf\left(\vf{x^a}+\pd{\Psi^\alpha}{u^a}\vf{x^\alpha}+[\tV_a,\tGamma]\right)$ and the obvious notation $\tilde{E}_a(\bar{K}):=\tGamma\left(\pd{\bar K}{u^a}\right)-\pd{\bar K}{x^a}.$ Expression \eqref{bar K2} bears close resemblance to \eqref{Cartan form shape} of lemma \ref{lemma 2}. It indicates, at least for this class of constraints and Cartan forms, that we can use the methods of the unconstrained inverse problem to find $\d\tth_{\bar{K}}$ and hence $\bar{K}$, see example \ref{xmpl4} in section \ref{sect5.2}.

There remains the nondegeneracy issue for $\d\tth_K$ and possibly $\d\tth_{\bar{K}}$. We will see in proposition \ref{Prop2.1} below that the uniqueness of the Lagrange-d'Alembert field in general requires only the nondegeneracy of the restriction of $\PB{\d\theta_K}$ to the annihilator of the constraint distribution $\Sp\{\teta^\alpha\}$. For this reason we will not give an extensive treatment of the full  nondegeneracy of $\PB{\d\theta_K},$ although it may be useful at a computational level in the constrained inverse problem.

 On the full space it is the maximal rank of an arbitrary Cartan form $\d\theta_K,$ namely $\wedge^n\d\theta_K\neq 0$, which expresses the assumed regularity of $K.$  We will briefly examine the implications of this condition on the various $p$-fold wedges of $\d\tth_K$ for $p<n.$ While these $2p$-forms will depend on the constraints, they are independent of any particular constrained \SODE\, so we will use $\hGamma_K$ of \eqref{hGamma}, so that 
\[
\PF\hGamma_K=\bar{\Gamma}_K-\overline{\Gamma_k(G^\alpha)}\tV_\alpha
\]
with its associated fields $\tH_a,\tV_b,\vf{x^\alpha}$ on $\tE.$ Recall that $\bar{\Gamma}_K$ coincides with the push-forward of a field on $\tE$ (namely $\hGamma_K$) if and only if $\Gamma_K$ is tangent to $\Img(\iE),$ that is, $\overline{\Gamma_k(G^\alpha)}=0.$

Notice first that, for $p$ with  $1+n+m<2p\leq 2n,$ every term in $\wedge^p\d\theta_K$ has $\d G^\alpha$ factors, so that all these exterior products pull-back to zero on $\tE$ (also by degree).  However, because lower order $p$-fold wedges of $\d\theta_K$ may have terms with no $\d G^\alpha$ factors, for the largest $p$ such that $2p\leq 1+n+m, $
$\wedge^p\PB\d\theta_K$ may be non-zero  since the $\text{dim}(\tE)=n+m+1.$ 

By contrast, the non-degeneracy condition on the restriction of $\PB\d\theta_K$ to the $2m+1$ dimensional distribution $\mathcal{D}:=\Sp\{\hGamma,\tH_a,\tV_a\}$ is  simply $\wedge^m(\PB\d\theta_K)|_\mathcal{D}\neq 0$ (see \cite{BatesSn} for a similar condition for the autonomous case).

\begin{defn}
We will say that  $\PB\d\theta_K$ is {\em non-degenerate or maximal rank} if $\wedge^p\PB\d\theta_K\neq 0$ for the largest $p$ such that $2p\leq 1+n+m$.

\end{defn} 
So there are two cases: the number of constraints, $n-m,$ is even (so $p=(n+m)/2$) or odd (so $p=(n+m+1)/2$), and we have (without proof)
\begin{propn}\label{PF regularity}
Given $n-m$ constraint functions,  $G^\alpha:=u^\alpha-\Psi^\alpha,$ a regular function $K$ and the existence of non-degenerate $\PB\d\theta_K$, then
\begin{itemize}
\item[1.] if $n-m$ is even, then $\wedge^{(n+m)/2}\PB{\d\theta_K}\neq 0$ and $\PB{\d\theta_K}$ has a one-dimensional null space
\item[2.] if $n-m$ is odd, then $\wedge^{(n+m+1)/2}\PB{\d\theta_K}\neq 0$ and $\PB{\d\theta_K}$ has trivial null space
\item[3.]  if $n-m$ is even, then the null space of $\PB{\d\theta_K}$ is spanned by a constrained \SODE\ $\tGamma$
if an only if $\Gamma_K$ is tangent to $\Img(\iE)$, in which case $\PF\tGamma=\bar{\Gamma}_K.$ 
\item[4.] If $n-m$ is odd there exists no non-trivial  $X$ on $\tE$ with $\PF{X}\in\Sp\{\bar{\Gamma}_K\}.$
\end{itemize}
\end{propn}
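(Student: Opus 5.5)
The plan is to work with the linear algebra of a skew form on the $(n+m+1)$-dimensional manifold $\tE$. Parts 1 and 2 are parity statements. A skew 2-form $\omega$ on a vector space of dimension $N$ has even rank $2r$ and null space of dimension $N-2r$. With $N=n+m+1$, the non-degeneracy hypothesis says $\wedge^p\PB\d\theta_K\neq0$ for the largest $p$ with $2p\le N$, so the rank is $2p$. If $n-m$ is even, then $n+m$ is even, so $N$ is odd, $p=(n+m)/2$ and the null space is one-dimensional. If $n-m$ is odd, then $N$ is even, $p=(n+m+1)/2=N/2$ and the null space is trivial. This is pointwise linear algebra and needs nothing from the earlier results.

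For part 3, I would compute $\PB\d\theta_K=\d\PB\theta_K$ on pushed-forward vectors. For $X,Y$ tangent to $\tE$ we have $\PB\d\theta_K(X,Y)=\d\theta_K(\PF X,\PF Y)$. Suppose first that $\Gamma_K$ is tangent to $\Img(\iE)$. Then $\hGamma_K$ satisfies $\PF\hGamma_K=\bar\Gamma_K$, and by Proposition \ref{NP2.5} Part 1, $\Gamma_K\hook\d\theta_K=0$. Hence $\hGamma_K\hook\PB\d\theta_K=\PB(\Gamma_K\hook\d\theta_K)=0$, and $\hGamma_K$ is a constrained \SODE\ spanning the one-dimensional null space from part 1. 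Conversely, suppose a constrained \SODE\ $\tGamma$ spans the null space. Then $\PF\tGamma\hook\d\theta_K$ vanishes on $T\,\Img(\iE)$, which has codimension $n-m$ in $TE$ and is annihilated by the $\d G^\alpha$. So we can write
\[
\PF\tGamma\hook\d\theta_K=\mu_\alpha\,\d G^\alpha
\]
along the image. Applying $\flat_K^{-1}$ and using $\d t(\PF\tGamma)=1$ gives
\[
\PF\tGamma=\Gamma_K+\mu_\alpha\flat_K^{-1}(\d G^\alpha).
\]
The task is then to show $\mu_\alpha=0$. I would contract with $\PF\tV_a$ and $\PF\vf{x^\alpha}$. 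Proposition \ref{NP2.5} Part 2 gives $\d\theta_K(V_A,\Gamma)=0$ for any \SODE, and $\PF\tGamma$ differs from a \SODE\ on $E$ only by a vertical term. Using this together with $\d\theta_K(V_A,V_B)=0$ and the matrix $K_{AB}$ from Lemma \ref{lemma 2}, the vectors $\flat_K^{-1}(\d G^\alpha)$ have horizontal parts $\sim K^{\alpha B}H_B$ (using the inverse Hessian). Those are non-\SODE\ directions: they alter the $\theta^A$ components, while $\PF\tGamma$ and $\Gamma_K$ both have $\theta^A(\cdot)=0$. Regularity of $K$ then forces $\mu_\alpha=0$, so $\PF\tGamma=\Gamma_K$ on the image, $\Gamma_K$ is tangent, and $\PF\tGamma=\bar\Gamma_K$.

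For part 4, when $n-m$ is odd, suppose $\PF X=f\bar\Gamma_K$ with $X\neq0$. Since $\PF$ is injective, $f\neq0$. Then $X\hook\PB\d\theta_K=f\,\PB(\Gamma_K\hook\d\theta_K)=0$, so $X$ lies in the null space, which is trivial by part 2. This is a contradiction.

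The main obstacle is the converse in part 3, namely showing $\mu_\alpha=0$. The $\theta^A$-component argument requires checking carefully that $\theta^A(\flat_K^{-1}\d G^\alpha)$ is essentially $(K^{-1})^{A\alpha}$ and hence independent in $\alpha$. I would first try the adapted basis $\{\Gamma_K,H_A,V_A\}$ of $\Gamma_K$ in Lemma \ref{lemma 2}: with $E_A(K)=0$ one gets $\flat_K(V_A)=-K_{AB}\theta^B$, and $\d G^\alpha$ has $\theta$-free and $\psi^\alpha$ parts, which should make the inversion explicit.
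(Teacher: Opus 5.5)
Your proof is correct. The paper states this proposition without proof, so there is no argument to match line by line. Parts 1, 2 and 4, and the sufficiency direction of part 3 via $\hGamma_K$, are exactly the pointwise rank count and the contraction $X\hook\PB\d\theta_K=\PB(\PF X\hook\d\theta_K)$ one would expect.

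Your route differs from the paper's own toolkit in the converse of part 3. You invert $\flat_K$ and compare $\theta^A$-components. This works, but the components are not quite $(K^{-1})^{A\alpha}$. In the basis of $\Gamma_K$ they are
\[
\theta^B\big(\flat_K^{-1}(\d G^\alpha)\big)=-(K^{-1})^{B\alpha}+(K^{-1})^{Ba}\pd{\Psi^\alpha}{u^a},
\]
that is, minus $K^{-1}$ applied to the $\psi$-coefficients of $\d G^\alpha$. These are still independent in $\alpha$, because the $\psi^\beta$-coefficient of $\d G^\alpha$ is $\delta^\alpha_\beta$. Two smaller points: with the convention $\d\theta_K(V_A,H_B)=K_{AB}$ of Lemma~\ref{lemma 2} one has $\flat_K(V_A)=+K_{AB}\theta^B$; and contracting with $\PF\tV_a$, $\PF\vf{x^\alpha}$ yields nothing, since both sides of $\PF\tGamma\hook\d\theta_K=\mu_\alpha\d G^\alpha$ vanish on $T\,\Img(\iE)$.

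What you call the main obstacle is in fact one line: evaluate that identity on the transverse field $V_\beta$. Since $\PF\tGamma$ is of \SODE\ form at points of the image, Proposition~\ref{NP2.5} Part 2 gives $\mu_\beta=\d\theta_K(\PF\tGamma,V_\beta)=0$. Equivalently, in the style of the proof of Corollary~\ref{zero multipliers} and the Comment after Proposition~\ref{reaction force}: by Lemma~\ref{lemma 2}, $\PF\tGamma\hook\d\theta_K=E_A(K)\theta^A$ is a contact form, and no nonzero combination of the $\d G^\alpha$ is one.

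This direct route kills $\mu_\alpha$ without using regularity at all. Regularity of $K$ enters only at the last step, through $\ker\d\theta_K=\Sp\{\Gamma_K\}$, to get $\PF\tGamma=\bar{\Gamma}_K$. Your $\flat_K^{-1}$ computation is heavier, but it does express $\PF\tGamma-\Gamma_K$ explicitly through $\flat_K^{-1}(\d G^\alpha)$, much as Proposition~\ref{reaction force} expresses the reaction forces.
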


So this form of nondegeneracy puts very strong conditions on 
$\PB\d\theta_K$ and we revisit the issue after the next proposition with an example comparing the two degeneracy types.

\subsection{Geometric formulation of the Lagrange-d'Alembert equations}\label{sect3.2}

In order to treat the constrained inverse problem, we first give a geometric formulation of the Lagrange-d'Alembert equations akin to $\Gamma \hook
\d\theta_L=0$ for the Euler-Lagrange equations. In doing this we will have to account for the surprising fact that, in general, a given regular Lagrangian and specified constraints do not specify a unique Lagrange-d'Alembert field, that is, unique constrained dynamics, without additional regularity conditions. This obviously does not alter the inverse problem when the dynamics and constraints are given, although we might choose to remove some of the alternative Lagrangians because of this. Both the geometric formulation and degeneracy of the Lagrange-d'Alembert equations are discussed in \cite{SCS99} although our development is more direct and extensive.

We will need the array of values $\PB d\theta_L ({\tilde V}_a, {\tilde H}_b)$. Since $\tH_b=\vf{x^a}+\pd{\Psi^\alpha}{u^a}\vf{x^\alpha}-\tGamma^b_a\tV_a$, the expression $\PB d\theta_L ({\tilde V}_a, {\tilde H}_b)$ is independent of the vertical part of $\tH_b$ and hence of $\tGamma,$ and we will use it in the statement of the next proposition without ambiguity. It is also important to note that both $\eta^\alpha:=\theta^\alpha-\pd{\Psi^\alpha}{u^a}\theta^a$ and $\teta^\alpha:=\PB \eta^\alpha$ are contact forms, independent of any \SODEs.

\begin{propn}\label{Prop2.1}

Given specified constraints and a regular Lagrangian with the matrix $\PB d\theta_L ({\tilde V}_a, {\tilde H}_b)$ non-singular on $\Img(\iE)$, the Lagrange-d'Alembert field $\tGamma_L$ \eqref{tGamma LdA} can be uniquely determined from  the equation on $\tE$ 
\begin{equation}\label{LAeqn2}
\tGamma_L \hook \d(\PB\theta_L)=\lambda_\alpha\teta^\alpha.
  \end{equation}

\end{propn}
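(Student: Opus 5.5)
The approach is to evaluate the equation $\tGamma\hook\d\tth_L=\lambda_\alpha\teta^\alpha$ on the adapted basis $\{\tGamma,\tH_a,\vf{x^\alpha},\tV_a\}$ of $T\tE$, whose dual basis is $\{dt,\teta^\alpha,\tpsi^a,\tth^b\}$, for an arbitrary constrained \SODE\ $\tGamma$ of the form \eqref{tGamma}. The aim is to show three things: that the components reproduce exactly the Lagrange-d'Alembert equations \eqref{LAeqn}, that the multipliers $\lambda_\alpha$ are forced, and that the accelerations $\tilde F^a$ are then uniquely determined by the non-singularity of $\PB\d\theta_L(\tV_a,\tH_b)$.

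First, the right-hand side $\lambda_\alpha\teta^\alpha$ vanishes on $\tGamma,\tH_a,\tV_a$ and equals $\lambda_\alpha$ on $\vf{x^\alpha}$. The components of the equation are therefore the following.

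(i) Evaluation on $\tV_a$. Use $\tth_L=\bar L\d t+\d\bar L\circ\tS+\overline{\pd{L}{u^\beta}}\teta^\beta$ from \eqref{PBTheta_K} and the standard formula $\d\omega(X,Y)=X\omega(Y)-Y\omega(X)-\omega([X,Y])$ together with the brackets \eqref{basis brackets}. This gives $\d\tth_L(\tGamma,\tV_a)=0$, exactly as in Part 2 of proposition \ref{NP2.5}: the $\teta^\beta$ term contributes nothing because $\teta^\beta$ kills $\tGamma$, $\tV_a$ and $[\tGamma,\tV_a]$. So this component is automatic.

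(ii) Evaluation on $\vf{x^\alpha}$. Using the listed component $\d\tth_K(\tGamma,\vf{x^\alpha})$ with $K=L$, the equation reads $\lambda_\alpha=\tGamma\big(\overline{\pd{L}{u^\alpha}}\big)-\overline{\pd{L}{x^\alpha}}$ after applying the chain rule. This is the second of \eqref{LAeqn}, and it defines the multipliers once $\tGamma$ is known.

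(iii) Evaluation on $\tH_a$. Here the equation is $\d\tth_L(\tGamma,\tH_a)=0$. Computing it as in the last step of lemma \ref{lemma 2}, the result should be
\[
\tGamma\Big(\overline{\pd{L}{u^a}}\Big)-\overline{\pd{L}{x^a}}+\pd{\Psi^\alpha}{u^a}\Big(\tGamma\Big(\overline{\pd{L}{u^\alpha}}\Big)-\overline{\pd{L}{x^\alpha}}\Big)=0.
\]
Substituting (ii), this is precisely \eqref{LdA id}, i.e.\ the first of \eqref{LAeqn} with $-\lambda_\alpha\pd{\Psi^\alpha}{u^a}$ on the right. To carry out this step, expand $\tH_a=\vf{x^a}-\tGamma^b_a\tV_b-\Psi^\beta_a\vf{x^\beta}$ and use the bracket $[\tGamma,\tH_a]$. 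The $\tGamma^b_a$, $\tilde K^\alpha_a$ and $\tilde\Phi$ contributions cancel against the corresponding terms in $\tH_a(\tth_L(\tGamma))$, where $\tth_L(\tGamma)=\bar L$. This bookkeeping, with the chain rule relating $\pd{\bar L}{w}$ to $\overline{\pd{L}{w}}$, is the main computational obstacle.

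(iv) Evaluation on $\tGamma$. This component is trivial.

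Hence \eqref{LAeqn2} is equivalent to \eqref{LAeqn}.

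For uniqueness, suppose $\tGamma_1$ and $\tGamma_2$ both satisfy \eqref{LAeqn2}, with multipliers $\lambda^1_\alpha$ and $\lambda^2_\alpha$. Their difference is $D=(\tilde F_1^a-\tilde F_2^a)\tV_a$, and
\[
D\hook\d\tth_L=(\lambda^1_\alpha-\lambda^2_\alpha)\teta^\alpha .
\]
Evaluate both sides on $\tH_b$. The right-hand side vanishes, while the left-hand side equals $(\tilde F_1^a-\tilde F_2^a)\,\PB\d\theta_L(\tV_a,\tH_b)$. As noted before the proposition, this matrix is independent of the choice of $\tGamma$, so by hypothesis it is non-singular and therefore $\tilde F_1=\tilde F_2$; then (ii) gives $\lambda^1=\lambda^2$.

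Existence follows the same way. Equation (iii) with (ii) substituted is linear in $\tilde F^b$, with coefficient matrix $\PB\d\theta_L(\tV_b,\tH_a)$, because $\tGamma(f)=(\text{terms independent of }\tilde F)+\tilde F^b\tV_b(f)$. By the explicit formula \eqref{kAB} this matrix is $\frac{\partial^2\bar L}{\partial u^a\partial u^b}+\overline{\pd{L}{u^\alpha}}\frac{\partial^2\Psi^\alpha}{\partial u^a\partial u^b}$, so non-singularity lets us solve uniquely for $\tilde F^a$. Both existence and uniqueness of $\tGamma_L$ therefore follow.
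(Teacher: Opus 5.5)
Your proof is correct, but it is organised differently from the paper's.

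\textbf{How the routes differ.} The paper does not compute on $\tE$ frame by frame. It takes an auxiliary \SODE\ $\Gamma$ on $E$ with $\Gamma=\PF\tGamma_L$ on $\Img(\iE)$ and uses the full-space shape of $\d\theta_L$ from lemma \ref{lemma 2}. This gives $\Gamma\hook\d\theta_L=E_A(L)\theta^A=(E_a(L)+E_\alpha(L)\pd{\Psi^\alpha}{u^a})\theta^a+E_\alpha(L)\eta^\alpha$, and pulling back yields the equivalence with \eqref{LAeqn} in two lines. For uniqueness the paper contracts $\PF\tV$ with $\d\theta_L=g_{AB}\psi^A\wedge\theta^B$ in the Euler--Lagrange basis. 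That expresses the $k$-matrix through the Hessian $g_{AB}$, and also produces the $\eta^\alpha$-components that are reused in proposition \ref{reaction force}.

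Your route stays intrinsic to $\tE$ and needs only the brackets \eqref{basis brackets}. The price is that step (iii) in effect redoes a constrained analogue of lemma \ref{lemma 2}. In return, your uniqueness step is more transparent: you evaluate $D\hook\d\tth_L$ on $\tH_b$, and $\d\tth_L(\tV_a,\tV_c)=0$ makes the choice of $\tH_b$ irrelevant. You also prove existence explicitly through linearity in $\tilde F^a$, which the paper leaves implicit.

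\textbf{Two small repairs.}

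First, in (iii) the $\tilde\Phi$ term does not cancel against anything; it simply vanishes because $\tth_L(\tV_b)=0$. The term $-\tilde K^\alpha_a\overline{\pd{L}{u^\alpha}}$ cancels against the chain-rule terms coming from $\tGamma(\pd{\bar L}{u^a})$ and $\tH_a(\bar L)$. This works precisely because $\tilde K^\alpha_a=\tGamma(\pd{\Psi^\alpha}{u^a})-\pd{\Psi^\alpha}{x^a}-\pd{\Psi^\beta}{u^a}\pd{\Psi^\alpha}{x^\beta}$. With this, your displayed formula in (iii) is exactly right.

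Second, the second term of your explicit $k$-matrix has the wrong sign. From \eqref{kAB} and antisymmetry,
\[
\d\tth_L(\tV_b,\tH_a)=\frac{\partial^2\bar L}{\partial u^a\partial u^b}-\overline{\pd{L}{u^\alpha}}\frac{\partial^2\Psi^\alpha}{\partial u^a\partial u^b}.
\]
It is this expression, not your version, that equals the coefficient $\tV_b(\overline{\pd{L}{u^a}})+\pd{\Psi^\alpha}{u^a}\tV_b(\overline{\pd{L}{u^\alpha}})$ of $\tilde F^b$ in (iii). The slip does not affect your logic. All you need is that this coefficient matrix is $\d\tth_L(\tV_b,\tH_a)$, which follows by writing $\tGamma=\tGamma_0+\tilde F^b\tV_b$ and using (i).
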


\begin{proof}
Suppose that $\tGamma_L$ is the Lagrange-d'Alembert field of $L$ and let $\Gamma$ be any \SODE\ on $E$ coinciding with $\PF\tGamma_L$ on the image of $\iE$ (in the notation of \eqref{hGamma} these are the \SODEs\  with $\hGamma=\tGamma_L.$) We will first show that $\tGamma_L \hook \d(\PB\theta_L)\in \Sp\{\teta^\alpha\}.$  As a result of \eqref{Cartan form shape} in Lemma \ref{lemma 2}, in
the bases provided by $\Gamma,$
\[
\d\theta_L=\beta_{AB}\theta^A\wedge\theta^B +E_A(L)\d t\wedge\theta^A + L_{AB}\psi^A\wedge\theta^B
\]
where $\beta_{AB}:=\oneqtr\left(V_A(E_B(L))-V_B(E_A(L))\right)$ and $L_{AB}:=V_AV_B(L)$. Hence
\[
\Gamma\hook\d\theta_L=E_A(L)\theta^A=(E_a(L)+E_\alpha(L)\pd{\Psi^\alpha}{u^a})\theta^a+E_\alpha(L)\eta^\alpha,
\]
so that  on the image of $\iE,$ using $\d(\PB\theta_L)=\PB(\d \theta_L)$ and $\PF \tGamma_L=\bGamma$,
\[
\tGamma_L \hook \d(\PB\theta_L)=\PF\tGamma_L\hook\d\theta_L=\lambda_\alpha\eta^\alpha \iff \lambda_\alpha=\overline{E_\alpha(L)} \ 
\text{and}\ \overline{E_a(L)}=-\lambda_\alpha\pd{\Psi^\alpha}{u^a}.
\]

We are left with verifying the statement about uniqueness. Assume that the equation (\ref{LAeqn2}) is satisfied by two 
distinct constrained \SODE s of the type \eqref{tGamma} (we allow distinct Lagrange multipliers for each). Then they differ by a vertical vector field 
$\tilde V = {\tilde V}^a {\tilde V}_a$, with 
\[
\PF\tilde V = {\tilde V}^a \left( V_a + \pd{\Psi^\alpha}{u^a} V_\alpha \right)\ \text{on}\ \Img(\iE),
\]
satisfying $\PF\tilde V \hook d\theta_L \in \Sp\{\eta^\alpha\}$. We show that under the assumed regularity condition, this requirement admits only $\tilde V=0$.

In the expression for $d\theta_L$ we may now use the bases provided by the Euler-Lagrange field of $L$, i.e. $\d\theta_L= g_{AB}\psi^A\wedge\theta^B$. In that case,  contracting with $\PF\tV$ and using $\{\theta^a,\eta^\alpha\}$ as a basis for the contact forms on $E$ gives,  

\begin{align}\label{V hook dtheta}
\PF\tilde V \hook d\theta_L &= {\tilde V}^a \left(d\theta_L(\PF\tV_a,\PF\tH_b)\theta^b+d\theta_L(\PF\tV_a,\PF\vf{x^\beta})\eta^\beta\right)\\ \notag
&= {\tilde V}^a \left(\left(\overline{g_{ab}} + \overline{g_{a\beta}} \pd{\Psi^\beta}{u^b} + \pd{\Psi^\alpha}{u^a}\left(\overline{g_{\alpha b}} + \overline{g_{\alpha \beta}} \pd{\Psi^\beta}{u^b}\right)\right) \theta^b + \left(\overline{g_{a\beta}}+\pd{\Psi^\alpha}{u^a}\overline{g_{\alpha \beta}}\right)\eta^\beta\right),
\end{align}
from which the statement follows (remembering that $\tH_b$ can belong to either of the \SODEs\ as noted in the preamble). However, notice that the non-singularity of $\PB d\theta_L ({\tilde V}_a, {\tilde H}_b)$  on $\Img(\iE)$ is generally only a sufficient condition for the $\tth^b$ components above to be zero, but it is a necessary condition for both the $\tth^b$ components and $\tV$ to be zero. 
\end{proof}

In \cite{SCS99} the  expression for $\d\tth_L(\tV_a,\tH_b):=\PB d\theta_L (\tV_a, \tH_b)$ in \eqref{V hook dtheta} is called the {\em $k$-matrix} and a regular $L$ will be termed {\em $k$-regular} at $p\in \tE$ if this array is non-singular at $p.$ Note from \eqref{kAB} that, for affine constraints $\d\tth_L(\tV_a,\tH_b)=\d\tth_{\bar{L}}(\tV_a,\tH_b),$ which is computationally useful.

A geometrical interpretation of the the $k$-matrix $\PB d\theta_L ({\tilde V}_a, {\tilde H}_b)$ is related to the restriction of $\PB d\theta_L$ to the $(2m+1)$--distribution on $\tilde{E}$ generated by
\[
{\mathcal D}:= {\rm Sp}\{\hat{\Gamma}, \tilde{H}_a, \tilde{V}_b\}=\{\tilde \eta^{\alpha}, m+1\leq \alpha\leq n\}^o
\]
where $\hat{\Gamma}$ is an arbitrary  constrained SODE as in (\ref{hGamma}). 
If we denote by $\tOmega_{\mathcal D}={\PB d\theta_L}\big|_{\mathcal D}$ then the non-degeneracy of $\tOmega_{\mathcal D}$, that is $\Lambda^m\tOmega_{\mathcal D}\not=0$, is equivalent to the non-degeneracy of the matrix with components $\PB d\theta_L ({\tilde V}_a, {\tilde H}_b)$. 
As a result the map
\[
\begin{array}{rrcl}
\flat_{\mathcal D}: &{\mathcal D}&\rightarrow& {\mathcal D}^*\\
& X&\rightarrow &X\hook\tOmega_{\mathcal D}+\d t(X)\d t
\end{array}
\]
is an isomorphism. And so the solution of the Lagrange-d'Alembert equations is unequivocally characterised  as the unique constrained \SODE\,    $\tilde{\Gamma}_L$ satisfying
\[
\tilde{\Gamma}_L\hook\tOmega_{\mathcal D}=0.
\]
Unfortunately $\tOmega_{\mathcal D}$ is not in general closed, however, it does provide a simple non-degeneracy check for the non-holonomic inverse problem similar to that in the unconstrained case.

We now provide the promised example contrasting the two forms of nondegeneracy.
\begin{xmpl}
Suppose that $L$ is a regular Lagrangian for an $n$-dimensional system with a single constraint ($m=n-1$) and Lagrange-d'Alembert field $\tGamma_L.$ Suppose that $\wedge^p\PB\d\theta_L\neq 0$ for $2p=n+m+1=2n$, so that $\d\tth_L:=\PB\d\theta_L$ is non-degenerate and by proposition \ref{PF regularity} has a trivial null space. Now $\tGamma_L\hook\d\tth_L=\lambda_{m+1}\teta^{m+1}$ means that 
\begin{align*}
&\wedge^n\d\tth_L(\tGamma_L,\tV_1,\dots,\tV_m,\dots,\tH_1,\dots,\tH_m,\vf{x^{m+1}})\\
&\ =\d\tth_L(\tGamma_L,\vf{x^{m+1}})|\d\tth_L(\tV_a,\tH_b)|\\
&\ =\lambda_{m+1}|k_{ab}|,
\end{align*}
meaning that this strong nondegeneracy requires {\em both} $\lambda_{m+1}\neq 0$ and $\wedge^m\tOmega_\mathcal{D}\neq 0.$
\end{xmpl}

However, in many cases the nondegeneracy on $E$ of the Lagrangian itself is not necessary to obtain a constrained regular problem.
\begin{xmpl}\label{sing_L xmpl}
 Consider the singular Lagrangian $L:  {\mathbb R}\times T{\mathbb R}^2\rightarrow {\mathbb R}$ given by 
$L=\frac{1}{2}\dot{x}^2+y$.
 Now introduce the  (integrable) constraint $\dot{y}=0$. Since
$$\theta_L=\dot{x}dx-\frac{1}{2}\dot{x}^2dt+ydt,$$
we get, in the
 coordinates $(t, x,y,\dot{x})$ on $\tilde{E},$
\[
\d\tth_L=d\dot{x}\wedge dx-\dot{x}d\dot{x}\wedge dt+dy\wedge dt.
\]
which is nondegenerate on $\tilde{E}$ (symplectic 2-form).
Observe that $\tilde{\eta}=dy$
and therefore
$i_X\d\tth_L=-dy$
gives us the unique constrained \SODE: 
\[
\tilde{\Gamma}_L=\frac{\partial}{\partial t}+\dot{x}\frac{\partial}{\partial x}
\]
Therefore, the system 
$\tilde{\Gamma}_L\hook\tOmega_{\mathcal D}=0$ has a unique \SODE\ solution.
\end{xmpl}
Nonetheless, we will pose our constrained inverse problem in section \ref{sect CIP} in terms of regular Lagrangians.

The following corollary will be useful later in looking for Lagrangians for constrained systems whose 
Lagrange-d'Alembert equations are Euler-Lagrange (that is, with zero Lagrange multipliers). 

\begin{cor}\label{zero multipliers}
Let $\Gamma_L$ be the Euler-Lagrange field of a regular Lagrangian $L$ and $\tGamma_L$ be the Lagrange-d'Alembert field of $L$ for some fixed constraints 
and Lagrange multipliers $\lambda_\alpha$.  Then, for all $\alpha,$
\[
\lambda_\alpha=0 \iff \PF\tGamma_L=\Gamma_L\ \text{on the image of}\ i_{\tE}. 
\]
\end{cor}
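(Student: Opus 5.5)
We will work directly from the computation in the proof of proposition \ref{Prop2.1}. Let $\Gamma$ be any \SODE\ on $E$ agreeing with $\PF\tGamma_L$ on $\Img(\iE)$, that is, any $\Gamma$ with $\hGamma=\tGamma_L$ in the notation of \eqref{hGamma}. That proof showed that on the image of $\iE$
\[
\lambda_\alpha=\overline{E_\alpha(L)},\qquad \overline{E_a(L)}=-\lambda_\alpha\pd{\Psi^\alpha}{u^a},
\]
where $E_A(L)=\Gamma\left(\pd{L}{u^A}\right)-\pd{L}{x^A}$ is computed with this $\Gamma$. Both directions of the corollary will be read off from these relations together with the characterisation of $\Gamma_L$ in proposition \ref{NP2.5} Part 1.

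($\Leftarrow$) Suppose $\PF\tGamma_L=\Gamma_L$ on $\Img(\iE)$. Then we may take $\Gamma=\Gamma_L$ in the relations above. Since $\Gamma_L\hook\d\theta_L=0$, we have $E_A(L)=0$ identically by \eqref{EL id}. Hence $\lambda_\alpha=\overline{E_\alpha(L)}=0$.

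($\Rightarrow$) Suppose $\lambda_\alpha=0$ for all $\alpha$. Then the relations give $\overline{E_a(L)}=0$ and $\overline{E_\alpha(L)}=0$, so $\overline{E_A(L)}=0$ for all $A$. Since $\Gamma$ and $\Gamma_L$ are both \SODEs, they differ by a vertical field, $\Gamma-\Gamma_L=(F^B-F_L^B)V_B$. It follows that
\[
E_A(L)=E_A^{\Gamma_L}(L)+(F^B-F_L^B)V_BV_A(L)=(F^B-F_L^B)L_{AB}.
\]
Restricting to $\Img(\iE)$ gives $\overline{(F^B-F^B_L)}\,\overline{L_{AB}}=0$. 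Regularity of $L$ then forces $\overline{F^B}=\overline{F^B_L}$, so $\Gamma=\Gamma_L$ on $\Img(\iE)$, and hence $\PF\tGamma_L=\Gamma_L$ there.

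The one point that needs care is that $\Gamma_L$ can legitimately serve as an extension of $\PF\tGamma_L$ in the ($\Leftarrow$) direction. This is exactly what the hypothesis says, and it implies $\Gamma_L$ is tangent to $\Img(\iE)$. The proof of proposition \ref{Prop2.1} allowed any \SODE\ extension, so the argument applies to this choice. We also use that $L_{AB}$ is nonsingular at points of the constraint submanifold, which holds because regularity of $L$ is assumed on the relevant open set of $E$.
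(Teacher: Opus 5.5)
Your proof is correct and is essentially the paper's argument written in components. The paper gets one direction from $\PB(\Gamma_L\hook\d\theta_L)=\tGamma_L\hook\PB\d\theta_L=\lambda_\alpha\teta^\alpha$; for the other it writes $\PF\tGamma_L=\bGamma_L+V$ with $V$ vertical and uses the maximal rank of $\d\theta_L$ to get $V=0$ from $\PB(g_{AB}\psi^A(V)\theta^B)=0$, which is exactly your identity $E_A(L)=(F^B-F^B_L)L_{AB}$ evaluated on $\Img(\iE)$. One small slip, also present in the paper: the gloss ``that is, $\hGamma=\tGamma_L$'' does not by itself force $\Gamma$ to be tangent to $\Img(\iE)$, but your argument correctly relies on the stronger condition $\Gamma=\PF\tGamma_L$ on the image.
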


\begin{proof}

Suppose that $\PF\tGamma_L=\Gamma_L$ on the image of $\ i_{\tE},$ then 
\[
0=\PB(\Gamma_L\hook\d\theta_L)=\tGamma_L\hook\PB\d\theta_L=\lambda_\alpha\teta^\alpha,
\] 
so that all $\lambda_\alpha=0.$ Conversely, if all $\lambda_\alpha=0$ then 
\[
0=\tGamma_L\hook\PB\d\theta_L=\PB(\PF\tGamma_L\hook\d\theta_L).
\] 
Now $\PF\tGamma_L = \bGamma_L +V$ for some vertical $V,$ so that 
\[
0=\tGamma_L\hook\PB\d\theta_L=\PB (V\hook \d\theta_L) 
= \PB(g_{AB}\psi^A(V)\theta^B)
\]
in the 1-form basis for $\Gamma_L.$ Note that $g_{AB}\psi^A(V)\theta^B \notin \Sp\{\d G^\alpha\}$ so that $0=\overline{V\hook\d\theta_L}=\PF\tGamma\hook\d\theta_L.$ But $\d\theta_L$ has maximal rank on its domain which includes the image of $i_{\tE}$ and 
so $\PF\tGamma_L$ is (the unit multiple of) $\Gamma_L$ on the image of $i_{\tE}$. 

\end{proof}

We can utilise the last part of the proof of proposition \ref{Prop2.1} to directly relate 
the Euler-Lagrange field, $\Gamma_L$, to the Lagrange-d'Alembert field, $\tGamma_L$ and also give an alternate proof of the corollary above.
Following \eqref{hGamma} we define $\hGamma_L$ on $\tE$ by 
\[
\PF\hGamma_L=\Gamma_L-\overline{\Gamma_L(G^\alpha)}V_\alpha
\]
so that
\[
\hGamma_L=\vf{t}+u^a\vf{x^a}+\Psi^\alpha\vf{x^\alpha}+\overline F^a\vf{u^a}
\]
and
\[
\tGamma_L-\hGamma_L=(\tilde F^a-\bar F^a)\tV_a.
\]\

\begin{propn}\label{reaction force}
Let $\Gamma_L$ and $\tGamma_L$ be the Euler-Lagrange and Lagrange-d'Alembert fields respectively of a regular Lagrangian $L$ with Hessian $g_{AB}$ and constraint functions $G^\alpha:=u^\alpha-\Psi^\alpha.$ 
If the matrix $k_{ab}:=\PB d\theta_L ({\tilde V}_a, {\tilde H}_b)$ is invertible then 
\begin{equation}\label{tF-bF}
\tilde F^a-\bar F^a=\overline{\Gamma_L(G^\beta)}\left(\bar g_{\beta b}+\bar g_{\beta\sigma}\pd{\Psi^\sigma}{u^b}\right)k^{ab}
\end{equation}
with Lagrange multipliers
\begin{equation}
\lambda_\alpha=(\tilde F^a-\bar F^a)\left(\bar g_{a\alpha}+\pd{\Psi^\beta}{u^a}\bar g_{\alpha\beta}\right)-\overline{\Gamma_L(G^\beta)}\bar g_{\alpha\beta}
\end{equation}

\end{propn}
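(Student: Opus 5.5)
We work on the image of $\iE$ and compare the two fields through their difference. By the discussion preceding the statement,
\[
\PF\tGamma_L=\PF\hGamma_L+(\tilde F^a-\bar F^a)\PF\tV_a=\Gamma_L-\overline{\Gamma_L(G^\beta)}V_\beta+(\tilde F^a-\bar F^a)\left(V_a+\pd{\Psi^\alpha}{u^a}V_\alpha\right).
\]
Thus $\PF\tGamma_L-\Gamma_L$ is a vertical field $W$ with components
\[
W^a=\tilde F^a-\bar F^a=:\Delta^a,\qquad W^\alpha=\Delta^a\pd{\Psi^\alpha}{u^a}-\overline{\Gamma_L(G^\alpha)}.
\]
Since $\Gamma_L\hook\d\theta_L=0$, Proposition \ref{Prop2.1} (equation \eqref{LAeqn2}) gives, after pushing forward and using $\d(\PB\theta_L)=\PB\d\theta_L$,
\[
W\hook\d\theta_L=\lambda_\alpha\eta^\alpha\quad\text{on }\Img(\iE).
\]

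To evaluate the left side, write $\d\theta_L=g_{AB}\psi^A\wedge\theta^B$ in the basis adapted to $\Gamma_L$, as in the uniqueness part of the proof of Proposition \ref{Prop2.1}. For vertical $W$ this gives $W\hook\d\theta_L=g_{AB}W^A\theta^B$. We re-express this in the contact basis $\{\theta^b,\eta^\beta\}$ using $\theta^\beta=\eta^\beta+\pd{\Psi^\beta}{u^b}\theta^b$:
\[
W\hook\d\theta_L=\left(\bar g_{Ab}+\bar g_{A\beta}\pd{\Psi^\beta}{u^b}\right)W^A\theta^b+\bar g_{A\beta}W^A\eta^\beta .
\]

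Next substitute the components of $W$. The $\theta^b$ coefficient splits into two parts. The first is $\Delta^a$ times the bracketed expression in \eqref{V hook dtheta}, which is exactly $k_{ab}$; this identification holds because $k$ is independent of the choice of \SODE, as noted before Proposition \ref{Prop2.1}. The second is
\[
-\overline{\Gamma_L(G^\beta)}\left(\bar g_{\beta b}+\bar g_{\beta\sigma}\pd{\Psi^\sigma}{u^b}\right).
\]
Since the right side $\lambda_\alpha\eta^\alpha$ has no $\theta^b$ component, this coefficient must vanish. Using the symmetry of $g$ and the invertibility of $k$, we multiply by $k^{ab}$ and obtain \eqref{tF-bF}.

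The $\eta^\alpha$ coefficient then yields $\lambda_\alpha$ directly:
\[
\lambda_\alpha=\Delta^a\bar g_{a\alpha}+\Delta^a\pd{\Psi^\beta}{u^a}\bar g_{\beta\alpha}-\overline{\Gamma_L(G^\beta)}\bar g_{\beta\alpha},
\]
which is the stated formula after invoking symmetry of $g$.

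The main care needed is in the basis change. One must check that pushing $\tGamma_L\hook\PB\d\theta_L$ forward is legitimate, i.e.\ that the pullback loses no information in the $\theta^b$ and $\eta^\beta$ components. This holds because the one-form $W\hook\d\theta_L$ lies in $\Sp\{\theta^A\}$ and contains no $\d G^\alpha$ terms, so its pullback faithfully records the coefficients, as in the proof of Corollary \ref{zero multipliers}. One must also confirm the index placement in $k_{ab}$, so that contracting with $k^{ab}$ (inverse on the correct side) gives \eqref{tF-bF} as written.
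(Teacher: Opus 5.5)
Your proposal is correct and is essentially the paper's proof. The paper likewise contracts the vertical field $\PF(\tGamma_L-\hGamma_L)$ into $\d\theta_L=g_{AB}\psi^A\wedge\theta^B$ (in the basis of $\Gamma_L$), identifies the $\theta^b$ coefficient with $k_{ab}$ via \eqref{V hook dtheta}, and reads off the $\theta^a$ and $\eta^\alpha$ components; the only difference is that it keeps $\overline{\Gamma_L(G^\alpha)}V_\alpha\hook\d\theta_L$ on the right-hand side rather than folding it into your single field $W$. Your index-placement concern is settled by the explicit bracket in \eqref{V hook dtheta}, which shows that $k_{ab}$ is symmetric because $g_{AB}$ is.
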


\begin{proof}
Set $\hat{V}:=\tGamma_L-\hGamma_L=(\tilde F^a-\bar F^a)\tV_a,$ then, on $\Img(\iE),$
\begin{align*}
\PF\hat{V}\hook\d\theta_L &=(\PF\tGamma_L-\Gamma_L+\overline{\Gamma_L(G^\alpha)}V_\alpha)\hook\d\theta_L\\
&=\lambda_\alpha\eta^\alpha-0+\overline{\Gamma_L(G^\alpha)}V_\alpha\hook\d\theta_L.
\end{align*}
Using \eqref{V hook dtheta} on the left hand side gives
\begin{equation}
(\tilde F^b-\bar F^b)k_{ab}=\overline{\Gamma_L(G^\beta)}\left(\bar g_{\beta a}+\bar g_{\beta\sigma}\pd{\Psi^\sigma}{u^a}\right)
\end{equation}
and
\begin{equation}
(\tilde F^a-\bar F^a)\left(\bar g_{a\alpha}+\pd{\Psi^\beta}{u^a}\bar g_{\alpha\beta}\right)=\lambda_\alpha+\overline{\Gamma_L(G^\beta)}\bar g_{\alpha\beta}
\end{equation}
being the $\theta^a$ and $\eta^\alpha$ components respectively; the $k$-regularity of $L$ then gives the result.
\end{proof}

In mechanical terms $\tilde F^b-\bar F^b$ are the components of the reaction forces induced by constraints. 
The presence of terms $\overline{\Gamma_L(G^\beta)}$ identifies the role of the possible tangency of $\Gamma_L$ to $\Img(\iE)$ or of the $G^\alpha$ as possible first integrals of $\Gamma_L.$ 

\begin{comment*}\ 

Because
\[
\teta^\alpha=\PB(\theta^\alpha-\pd{\Psi^\alpha}{u^a}\theta^a)
\]
equation \eqref{LAeqn2} is equivalent to
\begin{equation}
(\PF\tGamma)\hook \d\theta_L-\lambda_\alpha(\theta^\alpha-\pd{\Psi^\alpha}{u^a}\theta^a) \in \text{ker}(\PB).
\end{equation}

The left hand side of this relation is in the contact distribution and $\d(u^\alpha-\Psi^\alpha)$ is not, so we have
\begin{equation}
(\PF\tGamma)\hook \d\theta_L-\lambda_\alpha(\theta^\alpha-\pd{\Psi^\alpha}{u^a}\theta^a)=\ell_A\theta^A
\end{equation}
where $\ell_A\circ i_{\tE}=0.$
\end{comment*}

\section{The unconstrained Inverse Problem} \label{sec4}

\subsection{Generalities}

We start with a new, instrinsic proof of the Crampin, Prince and Thompson theorem \cite{CPT84} for the unconstrained inverse problem which avoids the a
priori use of the Helmholtz conditions. The theorem itself can be seen as Part 4 of proposition \ref{NP2.5} and the proof could easily be modified from
that of part 3; however the importance of the result warrants a stand-alone proof.

\begin{thm}\label{CPT Thm(1)} For a given \SODE\ $\Gamma$ and a closed, maximal rank 2-form $\Omega$ on $E$ with $\Gamma\hook\Omega=0$ and $\Omega(V_1,V_2)=0$
for any
pair of vertical vector fields, there exists a regular Lagrangian $L$ for which $\Gamma$ is the Euler-Lagrange field.
\end{thm}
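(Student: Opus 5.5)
The plan is to imitate the proof of Proposition \ref{NP2.5} Part 3, using the specific $\Gamma$ supplied by the hypotheses. Since $\Omega$ is closed, locally $\Omega=\d\omega$ for some one-form $\omega$. The hypothesis $\Omega(V_1,V_2)=0$ says that $\d\omega$ vanishes on the vertical distribution. That distribution is integrable, so $\omega$ restricted to its leaves is closed, and locally there is a function $f$ with $\omega(V)=V(f)$ for every vertical $V$. Replacing $\omega$ by $\omega_f:=\omega-\d f$ leaves $\Omega=\d\omega_f$ unchanged and gives $\omega_f(V)=0$ on vertical fields.

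Next set $L:=\omega_f(\Gamma)$ and show that $\theta_L=\omega_f$. On $\Gamma$ both sides equal $L$, since $\theta_L(\Gamma)=L$. On vertical fields both vanish, because $S$ kills vertical vectors and $\d t(V)=0$. On horizontal fields, Cartan's formula and the hypothesis $\Gamma\hook\Omega=0$ give
\[
\d L=\lie{\Gamma}\omega_f-\Gamma\hook\Omega=\lie{\Gamma}\omega_f .
\]
Since $S(X)$ is vertical and $\omega_f$ annihilates verticals, $\d L(S(X))=(\lie{\Gamma}\omega_f)(S(X))=-\omega_f([\Gamma,S(X)])$. For a horizontal $H$ we have $\lie{\Gamma}S(H)=-H$ and $S(H)$ vertical, so $[\Gamma,S(H)]=-H+S([\Gamma,H])$. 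The last term is vertical, hence $\d L\circ S(H)=\omega_f(H)$. This matches $\theta_L(H)$, since $\d t(H)=0$. Therefore $\theta_L=\omega_f$ and $\Omega=\d\theta_L$. The only input used here is the weaker condition $\Omega(V,\Gamma)=0$, which follows from $\Gamma\hook\Omega=0$.

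It remains to check regularity and the Euler-Lagrange property. Because $\Omega=\d\theta_L$ has maximal rank, $\wedge^n\d\theta_L\neq 0$. Lemma \ref{lemma 2}, applied in the basis adapted to $\Gamma$, gives the $\psi^A\wedge\theta^B$ block with coefficients $L_{AB}=V_AV_B(L)$. Every term of $\wedge^n\d\theta_L$ that pairs with $\psi^1\wedge\dots\wedge\psi^n$ involves only this block, so maximal rank forces $\det(V_AV_B(L))\neq0$ and $L$ is regular. Finally, $\Gamma\hook\d\theta_L=0$ together with Proposition \ref{NP2.5} Part 1 (uniqueness of the \SODE\ in the kernel) shows that $\Gamma=\Gamma_L$. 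Equivalently, by Lemma \ref{lemma 2}, $\Gamma\hook\d\theta_L=E_A(L)\theta^A$, so $E_A(L)=0$, which is the identity \eqref{EL id}.

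I expect the main obstacle to be the regularity step. One must argue carefully that maximal rank of $\Omega$ controls the Hessian block: the $\theta^A\wedge\theta^B$ and $\d t\wedge\theta^A$ terms cannot contribute to the top wedge against the $\psi$'s, because there are only $n$ of the forms $\theta^A$. The remaining local steps, producing $\omega$ and $f$, are routine Poincaré-lemma arguments.
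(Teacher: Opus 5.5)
Your proposal is correct and follows the paper's own proof essentially step for step: a local primitive $\omega$, correction by $\d f$ to kill its vertical part, $L:=\omega_f(\Gamma)$, and $\theta_L=\omega_f$ via $\d L=\lie{\Gamma}\omega_f$ and the eigen-properties of $\lie{\Gamma}S$; your regularity argument supplies what the paper only asserts. In that step, counting $\theta^A$'s removes only the $\theta^A\wedge\theta^B$ terms, and the $\d t\wedge\theta^A$ terms could still make $\wedge^n\Omega\neq0$ with a singular Hessian; it is $E_A(L)=0$ (from $\Gamma\hook\Omega=0$, giving $\Omega=V_AV_B(L)\,\psi^A\wedge\theta^B$) that removes them, so invoke it before concluding $\det(V_AV_B(L))\neq0$.
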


\begin{proof}
We show that $\Omega = \d\theta_L$. Assume that we have a \SODE\ $\Gamma$ and a closed, maximal rank 2-form $\Omega$ on $E$ with $\Gamma\hook\Omega=0$ and $\Omega(V_1,V_2)=0$ for any pair of vertical vector fields. Now, since $\d\Omega=0$, $\Omega=\d\omega$ at least locally and
so $\d\omega(V_1,V_2)=0$ for any pair of vertical vectors. And because the vertical distribution is integrable there exists locally a function $f$ on $E$
with $\omega=\d f$ on $V(E)$, that is, $\omega(V)=V(f)$ for all vertical fields $V$. Now we will show that $\omega_f:=\omega-\d f$ is a Cartan 1-form,
$\theta_L,$ for $L:=\omega_f(\Gamma)=\omega(\Gamma)-\Gamma(f).$ \newline

Setting $\theta_L:=L\d t+\d L\circ S$, using $\d L=\d(\omega_f(\Gamma))=\lie{\Gamma}\omega_f,$ and the properties of $S$ including $\lie{\Gamma}S(V)=V$ for
$V\in V(E)$, it is straightforward to show that
\[
\d L\circ S = \omega_f\circ P_H
\]
and so $\theta_L=\omega_f$ as required.

Moreover, $\d\theta_L=\d\omega=\Omega$ so that $\Gamma \hook \d\theta_L=0, hence $ $L$ is a regular (local) Lagrangian with Euler-Lagrange field $\Gamma$.
\end{proof}

Notice that, given $\omega$ and $\d f$ with $V(f)=\omega(V),$ the Lagrangian $L$ can be directly recovered as $L=\omega(\Gamma)-\Gamma(f).$ If $f$ happens
to be a first integral of $\Gamma$ then the Lagrangian is just $\omega(\Gamma)$.

{\bf The Helmholtz Conditions}\\ How does theorem \ref{CPT Thm(1)} help solve the inverse problem as stated at the beginning of this section? In general,
one has a fixed $\Gamma$ but no $\Omega$. The modern approach is to find all closed, maximal rank two-forms $\Omega$ satisfying the conditions of the
theorem. But notice that proposition \ref{NP2.5} guarantees that such an $\Omega$ is a Cartan two-form $\d\theta_K$, so surely we are back in the position
of having to find a $K=L$ satisfying \eqref{trad EL}? The issue is resolved by applying the closure and maximal rank conditions last. So if a Lagrangian,
and hence a Cartan two-form, exists for which $\Gamma\hook\d\theta_L=0$ then we appeal to \eqref{Cartan form shape 2} of proposition \ref{shape of Omega}
to assume that our putative forms are $\Omega=g_{AB}\psi^A\wedge\theta^B$ with $g_{AB}=g_{BA}$ in the basis generated by the $\psi^A$ of our given
$\Gamma$. Applying closure and maximal rank to  two-forms in the ideal generated by $\{\onehalf(\psi^A\wedge\theta^B+\psi^B\wedge\theta^A)\}$ is guaranteed
to produce {\em only} Cartan two-forms, and all of them, and hence all Lagrangians for our $\Gamma$ or none; that is, to elaborate the uniqueness and
existence of Lagrangians for our \SODE.

These closure conditions are called the {\em Helmholtz conditions}, developed by Douglas \cite{D41}, in more modern form by Sarlet \cite{S82}.  In the 
current context we follow \cite{KP08}; for example, $\d\Omega(\Gamma, V_A,V_B)=0$ gives the symmetry condition required for the Hessian of the supposed
Lagrangian. In full, the four Helmholtz conditions are 
\begin{alignat*}{2}
&\d\Omega (\Gamma, V_A, V_B) = 0, \qquad \qquad &&\d \Omega (\Gamma, V_A, H_B) = 0, \\
&\d\Omega (\Gamma, H_A, H_B) = 0, \qquad \qquad &&\d \Omega (H_A, V_B, V_C)  = 0.
\end{alignat*}

The remaining conditions arising from $\d\Omega = 0$, namely
\begin{displaymath}
\d \Omega (H_A, H_B, V_C) = 0 \qquad \mbox{and} \qquad \d \Omega(H_A, H_B, H_C) = 0,
\end{displaymath}
are known to be derivable from the first four, e.g., in \cite{A03}, and a new proof is given in the appendix.

These produce the Helmholtz conditions of Sarlet on the $g_{AB}$, known as the `multiplier' or `multiplier matrix':

\[
g_{AB} = g_{BA}, \quad \Gamma(g_{AB}) =
g_{AC}\Gamma_B^C+g_{BC}\Gamma_A^C, \quad
g_{ac}\Phi_B^C  = g_{BC}\Phi_A^C, \quad \frac{\partial g_{AB}}{\partial u^C}  =
\frac{\partial g_{AC}}{\partial u^B},
\]

For reference:
\begin{align*}
\d\Omega & = (\Gamma (g_{AB}) - g_{CB} \Gamma^C_A - g_{AC} \Gamma^C_B) \d t \wedge \psi^A \wedge \theta^B \\
&+ (H_D (g_{AB}) - g_{CB} V_A (\Gamma^C_D))\psi^A\wedge\theta^B \wedge\theta^D \\
&+ V_C (g_{AB}) \psi^C \wedge\psi^A\wedge\theta^B \\
&+ g_{AB} \psi^A\wedge\psi^B \wedge \d t \\
&+ g_{CA} \Phi^C_B\theta^A \wedge\theta^B \wedge \d t \\
&+ g_{CA} H_B (\Gamma^C_D)\theta^A \wedge\theta^B\wedge\theta^D.
\end{align*}
To re-iterate: applying the closure condition last is the key to solving this inverse problem. We will produce the Helmholtz conditions for the constrained
case in the next section.

The following result (compare with lemma \ref{lemma 2} and proposition \ref{shape of Omega}) will be useful later.

\begin{propn}\label{shape of Omega 2} Suppose that $\Gamma_L$ is the Euler-Lagrange field of a Cartan two-form $\d\theta_L$ and let
$\Gamma$ be a \SODE\ distinct from $\Gamma_L.$ Let the modified force forms of $\Gamma_L$ and $\Gamma$ be $\hat\psi^A$ and $\psi^B$ respectively and
suppose that
\[
\d\theta_L=g_{AB}\hat\psi^A\wedge\theta^B
\]
where $g_{AB}$ satisfies the Helmholtz conditions (relative to $\Gamma_L$). Then
\begin{equation}\label{shape Omega eqn}
\d\theta_L=g_{AB}\psi^A\wedge\theta^B + g_{AB}\check F^A\d t\wedge\theta^B+\lambda_{AB}\theta^C\wedge\theta^B
\end{equation}
where $\check F^A:=F^A-\hat{F}^A$ are the (vertical) components of $\Gamma-\Gamma_L,$
\[
\lambda_{AB}:=\oneqtr\left(\pd{}{u^C}(g_{AB}\check F^A)-\pd{}{u^B}(g_{AC}\check F^A)\right)
\]
and
\begin{equation}\label{Gamma hook dtheta_L}
 \Gamma\hook\d\theta_L=g_{AB}\check F^A\theta^B.
\end{equation}
\end{propn}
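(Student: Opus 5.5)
The plan is to rewrite the forms $\hat\psi^A$ of $\Gamma_L$ in terms of the forms $\psi^A$ of $\Gamma$ and substitute. Write $\Gamma=\Gamma_L+\check F^AV_A$. The connection coefficients then differ by $\Gamma^A_B-\hat\Gamma^A_B=-\onehalf V_B(\check F^A)$. From the definitions,
\[
\hat\psi^A=\d u^A-\hat F^A\d t+\hat\Gamma^A_B\theta^B
=\psi^A+\check F^A\d t-(\Gamma^A_B-\hat\Gamma^A_B)\theta^B
=\psi^A+\check F^A\d t+\onehalf V_B(\check F^A)\theta^B .
\]
Substituting into $\d\theta_L=g_{AB}\hat\psi^A\wedge\theta^B$ gives
\[
\d\theta_L=g_{AB}\psi^A\wedge\theta^B+g_{AB}\check F^A\d t\wedge\theta^B+\onehalf g_{AB}V_C(\check F^A)\,\theta^C\wedge\theta^B .
\]
Only the antisymmetric part of the coefficient $\onehalf g_{AB}V_C(\check F^A)$ in $C,B$ survives.

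The key step is to identify this antisymmetric part with $\lambda_{CB}=\oneqtr\bigl(V_C(g_{AB}\check F^A)-V_B(g_{AC}\check F^A)\bigr)$. Expanding $\lambda_{CB}$ produces, besides the wanted term, the extra piece $\oneqtr\check F^A\bigl(V_C(g_{AB})-V_B(g_{AC})\bigr)$. This piece vanishes by the Helmholtz condition $\pd{g_{AB}}{u^C}=\pd{g_{AC}}{u^B}$ for $\Gamma_L$, combined with the symmetry of $g$. So the $\theta\wedge\theta$ term equals $\lambda_{CB}\theta^C\wedge\theta^B$, which is \eqref{shape Omega eqn}; the statement's index $\lambda_{AB}$ should be read as $\lambda_{CB}$.

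Alternatively, one could note directly that, by Lemma \ref{lemma 2} applied with $K=L$ and the \SODE\ $\Gamma$, the form has this shape with $E_B(L)$ in place of $g_{AB}\check F^A$. Indeed, $E_B(L)=\Gamma(V_B L)-\pd{L}{x^B}=\check F^AV_AV_B(L)=g_{AB}\check F^A$, using the Euler–Lagrange identity for $\Gamma_L$. The $\lambda$ coefficients then follow immediately from $\beta_{AB}$ in Lemma \ref{lemma 2}. I would present this route as the main proof and use the substitution above as a cross-check.

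Finally, I contract \eqref{shape Omega eqn} with $\Gamma$. In $\Gamma$'s own adapted basis we have $\psi^A(\Gamma)=0$, $\theta^B(\Gamma)=0$ and $\d t(\Gamma)=1$, so only the middle term contributes, giving $g_{AB}\check F^A\theta^B$, which is \eqref{Gamma hook dtheta_L}. The main obstacle is only bookkeeping: the sign conventions in $\Gamma^A_B=-\onehalf\partial F^A/\partial u^B$, and the factor of $\onehalf$ versus $\oneqtr$ arising from antisymmetrisation. Both are settled by checking the value $\d\theta_L(H_C,H_B)$ against Lemma \ref{lemma 2}.
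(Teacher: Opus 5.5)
Your proposal is correct. The substitution argument you keep as a cross-check is exactly the paper's proof. The paper writes $\hat\psi^A=\psi^A+\check F^A\d t+(\hat\Gamma^A_C-\Gamma^A_C)\theta^C$ with $\hat\Gamma^A_C-\Gamma^A_C=\onehalf\pd{\check F^A}{u^C}$. It then uses the Helmholtz condition $\pd{g_{AB}}{u^C}=\pd{g_{AC}}{u^B}$ to discard the piece $\check F^A\pd{g_{AB}}{u^C}$, which is symmetric in $B,C$ and so drops out against $\theta^C\wedge\theta^B$. You do the same.

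Your preferred route through Lemma \ref{lemma 2} is genuinely different. You first compute $E_B(L)=\check F^A V_AV_B(L)$ from the Euler--Lagrange identity for $\Gamma_L$. You then read off all three terms, including $\beta_{CB}=\lambda_{CB}$, from the shape that every Cartan two-form has relative to an arbitrary \SODE.

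For this you need $g_{AB}=V_AV_B(L)$, which you use without comment. It follows at once: the proof of Lemma \ref{lemma 2} gives $\d\theta_L(V_A,H_B)=V_AV_B(L)$ for the horizontal fields of any \SODE, while the hypothesis gives $\d\theta_L(V_A,\hat H_B)=g_{AB}$.

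With that line added, your route never invokes the Helmholtz conditions. They hold automatically for the Hessian, so that hypothesis in the statement is redundant. Your route also yields Corollary \ref{Phys EL} as an intermediate step rather than as a consequence. The paper's route, by contrast, is a pure change of coframe that uses neither closure nor the Lagrangian. It therefore applies verbatim to any two-form $g_{AB}\hat\psi^A\wedge\theta^B$ whose coefficients satisfy $\pd{g_{AB}}{u^C}=\pd{g_{AC}}{u^B}$. That is the form in which the same shape is used for the candidate two-forms in the constrained Helmholtz setting.

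Your reading of the index typo ($\lambda_{AB}$ should be $\lambda_{CB}$) and your final contraction with $\Gamma$ are both fine.
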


\begin{proof}
Now
\[
\hat\psi^A=\d u^A-\hat{F}^A\d t+\hat{\Gamma}^A_B\theta^B,\quad\ \psi^A=\d u^A-F^A\d t+\Gamma^A_B\theta^B
\]
so that
\[
\d\theta_L=g_{AB}\hat\psi^A\wedge\theta^B=g_{AB}\psi^A\wedge\theta^B+g_{AB}\check F^A\d t\wedge\theta^B +
g_{AB}(\hat{\Gamma}^A_C-\Gamma^A_C)\theta^C\wedge\theta^B.
\]
Now
\begin{align*}
&\hat{\Gamma}^A_C-\Gamma^A_C=-\onehalf\pd{}{u^C}(\hat F^A-F^A)=\onehalf\pd{\check F^A}{u^C}\\
\implies &g_{AB}(\hat{\Gamma}^A_C-\Gamma^A_C)\theta^C\wedge\theta^B=\onehalf\left(\pd{(g_{AB}\check{F}^A)}{u^C}
-\pd{g_{AB}}{u^C}\check{F}^A\right)\theta^C\wedge\theta^B.
\end{align*}
But $g_{AB}$ satisfies the Helmholtz conditions, in particular, $\pd{g_{AB}}{u^C}=\pd{g_{AC}}{u^B}$, so that the final expression is 
skew symmetric on $B,C$, hence the claimed result. 

\end{proof}
As a result of lemma \ref{lemma 2} we have
\begin{cor}\label{Phys EL}
The \SODE\ $\Gamma$ of proposition \ref{shape of Omega 2} satisfies 
\[
E_A(L):=\Gamma\left(\pd{L}{u^A}\right)-\pd{L}{x^A}=\bar{F}_A:=g_{AB}\bar{F}^B.
\]
\end{cor}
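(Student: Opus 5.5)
The plan is to compute $\Gamma\hook\d\theta_L$ in two ways and compare coefficients. The first computation uses lemma \ref{lemma 2}. The second uses equation \eqref{Gamma hook dtheta_L} of proposition \ref{shape of Omega 2}. Throughout I read $\bar F^B$ in the statement as the components $\check F^B=F^B-\hat F^B$ of $\Gamma-\Gamma_L$, so the claim to prove is $E_A(L)=g_{AB}\check F^B$.

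First, apply lemma \ref{lemma 2} with $K=L$, which is regular. We use the arbitrary \SODE\ $\Gamma$, not $\Gamma_L$, together with its adapted coframe $\{\d t,\psi^A,\theta^A\}$. This gives
\[
\d\theta_L=\beta_{AB}\theta^A\wedge\theta^B+E_A(L)\,\d t\wedge\theta^A+L_{AB}\psi^A\wedge\theta^B .
\]
Since $\{\d t,\psi^A,\theta^A\}$ is dual to $\{\Gamma,V_A,H_A\}$, we have $\d t(\Gamma)=1$ and $\psi^A(\Gamma)=\theta^A(\Gamma)=0$. Contracting with $\Gamma$ therefore kills the $\theta\wedge\theta$ and $\psi\wedge\theta$ terms and leaves
\[
\Gamma\hook\d\theta_L=E_A(L)\,\theta^A .
\]

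Second, equation \eqref{Gamma hook dtheta_L} gives $\Gamma\hook\d\theta_L=g_{AB}\check F^A\theta^B$. The $\theta^A$ are linearly independent, so equating the two expressions yields $E_B(L)=g_{AB}\check F^A$. Relabelling indices and using the symmetry $g_{AB}=g_{BA}$, which is the first Helmholtz condition, gives $E_A(L)=g_{AB}\check F^B$, as claimed.

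As a consistency check, one can also compare the full expansions, lemma \ref{lemma 2} against \eqref{shape Omega eqn}, in the basis $\{\d t\wedge\theta,\ \psi\wedge\theta,\ \theta\wedge\theta\}$. The $\psi\wedge\theta$ coefficients give $L_{AB}=g_{AB}$, as expected for the Hessian. The $\d t\wedge\theta$ coefficients reproduce the result above.

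There is no real obstacle here. The only care needed is to note that lemma \ref{lemma 2} holds for any \SODE, so it may be applied with $\Gamma$ rather than $\Gamma_L$, and to track the index placement so that the symmetry of $g$ is invoked correctly.
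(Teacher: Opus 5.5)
Your proof is correct and is essentially the paper's argument. The paper derives the corollary directly from lemma \ref{lemma 2}, by matching the $\d t\wedge\theta$ terms (equivalently, contracting with $\Gamma$) of that lemma's expansion of $\d\theta_L$ relative to $\Gamma$ against \eqref{shape Omega eqn}--\eqref{Gamma hook dtheta_L}, and your reading of $\bar F^B$ as the components $\check F^B$ of $\Gamma-\Gamma_L$ is the right one.
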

As an aside, this resembles an equation of motion under a generalised  force $\bar{F}_A,$ which we briefly discuss in the  final outlook section. 
\subsection{The solution - an outline} \label{sec42}

While we are mainly concerned with the formulation of the inverse problem in the two cases, we should not omit a brief summary of the structure of the solution of the unconstrained inverse problem in the sense of Douglas and as elaborated by Do and Prince. The reader is directed to a variety of references with different degrees of technical detail \cite{D41,APST06,KP08,Do16,DP16,DP21}. 

The 2016 Douglas-Do classification hinges on the spectral properties of the Jacobi endomorphism $\Phi.$ In many of the cases the fine structure of the solution depends on the Frobenius integrability of particular eigenspaces, and this is examined with the Cartan-K\"{a}hler theorem via EDS which we won't discuss.

As indicated in the previous sub-section we begin with a generic $\Gamma$ and the ideal generated by $\{\onehalf(\psi^A\wedge\theta^B+\psi^B\wedge\theta^A)\}$ in which possible Cartan two-forms will be found. Then we lay out all the possible spectral types (diagonalisable, distinct eigenvalues, etc) for the Jacobi endomorphism of $\Gamma.$ For each type we make vertical and horizontal copies of the eigenforms of $\Phi$ and reframe our ideal accordingly. We also lay out all the possible integrability types for each of these double eigenspaces. The spectral type and the corresponding integrability types will dictate the largest differential (closed) sub-ideal of the original ideal in which any closed, maximal rank two-forms will be found, by an algebraic process of reduction. Importantly, non-existence of a solution arising at this differential ideal step usually comes from applying the non-degeneracy requirement.
So our classification is based on the diagonalisability of $\Phi$ firstly, then the number of distinct eigenvalues and the integrability of eigenform distributions of $\Phi$ and lastly, the step at which a `maximal rank' differential ideal is obtained. The uniqueness and existence details of the closed forms in each class are then determined by application of the Cartan-K\"{a}hler theorem.

As an example, here is an outline of the second of the four cases (\cite{Do16,DP16,DP21}):\\
{\bf Case B. $\mathbf{\Phi}$ is diagonalisable with distinct real-valued eigenvalues}\\
Further subcases will be divided according to the integrability of the lifted two-dimensional  eigenform distributions of $\Phi$ i.e. $q$ distributions are non-integrable and $n-q$ are integrable for $q=1,\dots,n$. For each $q,$ if up to and including the $q^{th}$ step of the algebraic reduction process there is no differential ideal, then there is no non-degenerate multiplier $g_{AB}$. Hence, for each $q$, the subcases to be considered are that a differential ideal is generated at step $1$, step $2$,..., up to step $q$.\\

\section{The constrained Inverse Problem\label{sect CIP}}\label{sec5}

\subsection{Generalities}

Before directly attacking the constrained inverse problem, it is worth forming some expectations about the solution. Since the given data, that is, 
constraints and a constrained {\SODE}, is entirely on $\tE,$ can we hope to construct a Lagrangian on the full space $E$? And, if we can, what can be said 
about its uniqueness? For example, consider a nontrivial  function $K$ on $E$ of the constraint functions $G^\alpha:=u^\alpha-\Psi^\alpha$ alone, 
zero-valued on the zero set of the $G^\alpha$ and having zero-valued exterior derivative on the constraint submanifold of $E$. The addition of such a 
function to a full-space Lagrangian does not change the Lagrange-d'Alembert equations but in general produces a different dynamical system on the full 
space, even on its restriction to the constraint submanifold on $E$ (as distinct from $\tE$). The point being that even if we can construct Lagrangians on the full space they 
may not give any insight into the physical nature of the external forces producing the constrained dynamics. 

As we saw in section \ref{motivation}, even the a priori restriction of possible solutions to physical Lagrangians such as \eqref{xmpl 1.1} on the full space does not produce a unique Lagrangian, with different solutions producing distinct, `pre-constraint' dynamics.
However, this is consistent with the view that the 
Lagrange-d'Alembert approach produces the constraint reaction forces to the externally applied ones. We will explore this ansatz in the next subsections.

In this discussion $\Gamma$ is any \SODE\ that is tangent to the constraints and that restricts on the constraints to a given \SODE-type 
vector field $\tilde\Gamma$. It is useful to think of $\tilde\Gamma$ as the nonholonomic vector field of a given physical 
nonholonomic system (whose integral curves are the nonholonomic trajectories), and of our inverse problem as the search for {\em alternative} Lagrangians 
for that mechanical system. We provide concrete examples in section \ref{dissipative}.

In what follows $\ker(\left.\PB\right|_{\Omega^1(\R,M)}):=\{\ell_A\theta^A\in\Omega^1(\R,M):\ell_A\circ i_{\tE}=0\},$ that is, the subset of contact 
one-forms which are zero-valued on $\tilde E$. 
\begin{thm} \label{CIP}
Suppose that we have a constrained system represented by a vector field $\tGamma$ given by \eqref{tGamma} and a closed, maximal rank 2-form $\Omega$ on $E$
with non-trivial pullback to $\tE$ satisfying
\begin{itemize}
\item[1.] $\Omega(V_A,V_B)=0$ for any pair of vertical vector fields,
\item[2.] $\Gamma \hook \Omega\in\Sp\{\eta^\alpha\}\mod \ker(\left.\PB\right|_{\Omega^1(\R,M)})$ for any \SODE\ $\Gamma$ with $\Gamma=\PF\tGamma$ on the 
    image of $i_{\tE},$ 
\item[3.] $\PB\Omega ({\tilde V}_a, {\tilde H}_b)$ is non-singular.
\end{itemize}
Then there exists a regular Lagrangian $L$ for which $\PB\Omega=\d(\PB\theta_L)$ so that $\tGamma$ is the unique Lagrange-d'Alembert field for $L$.\\

\end{thm}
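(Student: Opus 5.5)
The plan is to reproduce the argument of theorem \ref{CPT Thm(1)} and proposition \ref{NP2.5} part 3 on the full space $E$, and then pull the result back to $\tE$ and apply proposition \ref{Prop2.1}. First, since $\d\Omega=0$, we may write $\Omega=\d\omega$ locally. Hypothesis 1 gives $\d\omega(V_1,V_2)=0$, and the vertical distribution is integrable, so locally there is a function $f$ with $\omega(V)=V(f)$ for all vertical $V$. Set $\omega_f:=\omega-\d f$. Then $\Omega=\d\omega_f$ and $\omega_f$ annihilates $V(E)$.

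Next, fix any \SODE\ $\Gamma$ on $E$ with $\Gamma=\PF\tGamma$ on $\Img(\iE)$; such extensions exist and differ only by vertical fields. Define $L:=\omega_f(\Gamma)$, which does not depend on the choice of extension. We want to show $\theta_L=\omega_f$. As in proposition \ref{NP2.5} part 3, $\d L=\lie{\Gamma}\omega_f-\Gamma\hook\Omega$, so for any $X$,
\[
\d L\circ S(X)=-\omega_f(\lie{\Gamma}(S(X)))-\Omega(\Gamma,S(X)).
\]
The first term gives $\omega_f\circ P_H$, because $\lie{\Gamma}S=-\mathrm{id}$ on horizontal fields. For the second term we do not have $\Gamma\hook\Omega=0$. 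Instead, proposition \ref{shape of Omega} (whose proof only uses $\Omega(V,\Gamma)=0$) requires the vertical contraction of $\Gamma\hook\Omega$ to vanish. Hypothesis 2 says $\Gamma\hook\Omega$ is a contact form, namely in $\Sp\{\eta^\alpha\}$ modulo contact forms vanishing on $\tE$. Contact forms kill vertical vectors, so $\Omega(\Gamma,V)=0$ on $\Img(\iE)$ at least.

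This is the main obstacle. Hypothesis 2 controls $\Gamma\hook\Omega$ only modulo $\ker(\PB|_{\Omega^1(\R,M)})$, and the statement only asks for $\PB\Omega=\d(\PB\theta_L)$. So I would carry out the identification $\theta_L=\omega_f$ only after pulling back: it suffices that $\PB\theta_L=\PB\omega_f$. Since $\PB\omega_f(\tGamma)=\bar L$ and the tangent vectors to $\tE$ are $\PF$ of $\tGamma,\tH_a,\tV_a,\vf{x^\alpha}$, this reduces to checking $\PB(\d L\circ S)=\PB(\omega_f\circ P_H)$ on $\Img(\iE)$, where $\Omega(\Gamma,\cdot)$ is contact. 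If this pointwise argument fails, I would instead argue that $\Gamma\hook\Omega$ is semibasic everywhere near $\Img(\iE)$ up to terms in $\d G^\alpha$, and exploit the freedom to modify $\Gamma$ off $\Img(\iE)$. Concretely, I would choose a \SODE\ $\Gamma'$ agreeing with $\Gamma$ on the constraint set so that $\Omega(\Gamma',V_A)=0$ on a neighbourhood. This is possible by non-degeneracy of $\Omega$ (maximal rank), solving a linear system for the vertical correction, in the spirit of proposition \ref{NP2.5} part 3.

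Regularity of $L$ follows from maximal rank of $\Omega=\d\theta_L$. We then get $\PB\Omega=\d\PB\omega_f=\d(\PB\theta_L)$ and
\[
\tGamma\hook\d(\PB\theta_L)=\PB(\Gamma\hook\Omega)\in\Sp\{\teta^\alpha\},
\]
because the extra terms lie in $\ker\PB$ by hypothesis 2. Hypothesis 3 says the $k$-matrix $\PB\d\theta_L(\tV_a,\tH_b)$ is non-singular. Proposition \ref{Prop2.1} then shows that $\tGamma$ is the unique Lagrange-d'Alembert field of $L$, with multipliers given by the $\teta^\alpha$ components.
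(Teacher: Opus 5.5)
Your overall route is the paper's: build $\omega_f$ with $\Omega=\d\omega_f$ and $\omega_f$ vanishing on $V(E)$, take $L=\omega_f(\Gamma)$, then use hypothesis 3 and proposition \ref{Prop2.1}. But you mishandle the step you flag as the main obstacle, and neither of your repairs supports your final claims that $\Omega=\d\theta_L$ and that ``regularity of $L$ follows from maximal rank''. The missing observation is that hypothesis 2 is an identity of one-forms on $E$, namely $\Gamma\hook\Omega=\lambda_\alpha\eta^\alpha+\ell_A\theta^A$. The correction term $\ell_A\theta^A$ is itself a contact form; only its coefficients vanish on $\Img(\iE)$. Hence $\Gamma\hook\Omega\in\Omega^1(\R,M)$ on all of $E$, so $\Omega(\Gamma,V_A)=0$ everywhere, not just on $\Img(\iE)$. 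By hypothesis 1, $\Omega(\Gamma+W,V_A)=\Omega(\Gamma,V_A)$ for any vertical $W$, so this holds for every \SODE. That is exactly the hypothesis of proposition \ref{NP2.5} part 3, which then gives $\Omega=\d\theta_L$ on $E$ directly. This is the entire content of the paper's proof.

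Your fallback cannot work, for the same reason. A vertical correction $\Gamma'=\Gamma+W$ leaves $\Omega(\Gamma',V_A)$ unchanged, so the ``linear system for the vertical correction'' has zero coefficient matrix. Concretely, writing $\omega_f=a\,\d t+b_A\theta^A$, one has $\Omega(\Gamma,V_A)=b_A-V_A(a)$ for every \SODE. This is an invariant of $\Omega$, not something maximal rank lets you adjust.

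Your pointwise route is sound as far as it goes. One has identically $\theta_L=\omega_f-(\Gamma\hook\Omega)\circ S$ on $E$, so $\theta_L=\omega_f$ wherever $\Gamma\hook\Omega$ annihilates vertical vectors, and this yields $\PB\theta_L=\PB\omega_f$. However, agreement only along $\Img(\iE)$ means $\d\theta_L-\Omega$ there has the form $\d G^\alpha\wedge\mu_\alpha$, which need not vanish on $V_\alpha$. So you cannot then assert $\Omega=\d\theta_L$, or read off regularity of the Hessian of $L$ from the maximal rank of $\Omega$. If you apply the same identity with hypothesis 2 read on $E$, you get $(\Gamma\hook\Omega)\circ S=0$ identically, hence $\theta_L=\omega_f$ on $E$, which closes the gap.
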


\begin{proof}
Firstly, notice that condition 2 is equivalent to \eqref{LAeqn2} in proposition \ref{Prop2.1}. Secondly, condition 2 implies that $\Gamma 
\hook\Omega\in\Omega^1(\R,M)$ on $E$ and that this implies that $\Omega(\Gamma,V_A)=0$ for all such $\Gamma.$ Moreover, since all \SODEs\ differ only 
vertically, condition 1 implies that this is true for every \SODE. As a result of part 3 of proposition \ref{NP2.5}, conditions 1 and 2 above guarantee the 
existence of a Cartan two-form $\Omega=\d\theta_L$ for a putative Lagrangian $L.$ Then, as remarked, condition 2 gives 
\[
\tGamma \hook \PB\d\theta_L\in\Sp\{\teta^\alpha\}.
\]
The third condition ensures that the matrix $\PB d\theta_L ({\tilde V}_a, {\tilde H}_b)$ of $L$ is non-singular. So, proposition \ref{Prop2.1} establishes that $L$ is a Lagrangian with unique Lagrange-d'Alembert field $\tGamma$.
\end{proof}

Notice that the conclusion of the theorem only requires $\omega-\theta_L \in \ker \PB$, whereas by the construction of the proof it is closed (it is $\d f$).

{\bf Helmholtz Conditions for the constrained problem}

Using the unconstrained case as a model, we have $\tGamma$ and constraints $u^\alpha=\Psi^\alpha$, and we are looking for closed, maximal rank two-forms
$\Omega$ on $E$ modelled on \eqref{shape Omega eqn} where the distributions $\Sp\{H_A\},\Sp\{\psi^A\}$ belong to a particular \SODE\ $\Gamma$ which
coincides with $\PF\tGamma$ and $\Gamma\hook\Omega\in\Sp\{\eta^\alpha\}$ on the image of $\iE,$ that is

\begin{equation}\label{constr HH1}
\Omega={g}_{AB}\psi^A\wedge\theta^B + \lambda_B\d t\wedge\theta^B+\lambda_{DB}\theta^D\wedge\theta^B,
\end{equation}
\begin{align}\label{constr HH2}
&\Gamma\hook \Omega-\lambda_\alpha(\theta^\alpha-\pd{\Psi^\alpha}{u^a}\theta^a) \in \text{ker}(\PB),\notag \\
\text{equivalently}& \notag \\
&\Gamma\hook \Omega-\lambda_\alpha(\theta^\alpha-\pd{\Psi^\alpha}{u^a}\theta^a)=\ell_A\theta^A,
\end{align}
where $\lambda_{DB}:=\oneqtr\left(\pd{\lambda_B}{u^D}-\pd{\lambda_D}{u^B}\right),$ the $\lambda_\alpha$ are to be determined and $\ell_A\theta^A\in
\ker(\left.\PB\right|_{\Omega^1(\R,M)}).$

In these expressions $g_{AB}$ is invertible satisfying $g_{AB} = g_{BA}$ and $\frac{\partial g_{AB}}{\partial u^D}  = \frac{\partial g_{AD}}{\partial 
u^B}$, and, if such a Cartan two-form exists, the $\lambda_A$ will be the Euler-Lagrange expressions as described in lemma \ref{lemma 2}. 

We know from part 3 of proposition \ref{NP2.5} that all Cartan two-forms are of the type \eqref{constr HH1}, so it is the second condition
\eqref{constr HH2} which matters.
Hence we are looking for closed, maximal rank two-forms on $E$ in the ideal generated by
\[
\{\onehalf(\psi^A\wedge\theta^B+\psi^B\wedge\theta^A),(\lambda_B\d t+\lambda_{DB}\theta^D)\wedge\theta^B\ \text{(no sum on $B$)}\},
\]
of which there are infinitely many, but with conditions on $\tilde{E}$ 
\[
\lambda_b=-\lambda_\beta\pd{\Psi^\beta}{u^b}
\]
and $\frac{\partial g_{AB}}{\partial u^D}  = \frac{\partial g_{AD}}{\partial u^B}$ on the coefficient of the $\psi^A\wedge\theta^B$ term in $\Omega.$

Clearly \eqref{constr HH2} is a much weaker condition than $\Gamma\hook\d\theta_L=0$ of the unconstrained case, and some further ansatz may be required; we 
will explore this shortly.

Here are the Helmholtz conditions, that is, the closure conditions on $\Omega$ from proposition \ref{Omega HH} plus the $\tE$ condition. 

\begin{propn}\label{nonholHHcond}

For a fixed \SODE\ $\Gamma$ tangent to $\tE$ with horizontal fields $H_A$ and modified force forms $\psi^A$ the conditions on the form $\Omega$ above are
\begin{align*}
&0=\d\Omega (\Gamma, V_A, V_B) =g_{AB}-g_{BA}\ \text{satisfied by assumption},\\
&0=\d \Omega (\Gamma, V_A, H_B) = \Gamma(g_{AB})-\Gamma^C_A g_{CB}-\Gamma^C_B g_{CA}-\onehalf(V_A(\lambda_B)+V_B(\lambda_A)), \\
&0=\d\Omega (\Gamma, H_A, H_B) = \Phi^C_B g_{CA}-\Phi^C_A g_{CB}+2\Gamma(\lambda_{AB})+2\Gamma^C_A\lambda_{BC}-2\Gamma^C_B\lambda_{AC}+H_B(\lambda_A)-H_A(\lambda_B),\\
&0=\d \Omega (H_A, V_B, V_C) = V_C(g_{BA})-V_B(g_{CA})\ \text{satisfied by assumption},\\
&0=\d \Omega(V_A,H_B,H_C) = 2V_A(\lambda_{BC})+H_C(g_{AB})-H_B(g_{AC})+V_A(\Gamma^D_B)g_{DC}-V_A(\Gamma^D_C)g_{DB},\\
&0=\d \Omega(H_A,H_B,H_C) = 2H_A(\lambda_{BC})+2H_C(\lambda_{AB})+2H_B(\lambda_{CA})-R^D_{BC}g_{DA}-R^D_{AB}g_{DC}-R^D_{CA}g_{DB},\\
& \Gamma\hook \Omega-\lambda_\alpha(\theta^\alpha-\pd{\Psi^\alpha}{u^a}\theta^a)=\ell_A\theta^A.
\end{align*}
As before the last two closure conditions are redundant (see the appendix).
\end{propn}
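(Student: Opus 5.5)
The plan is to reduce almost everything to Proposition \ref{Omega HH}, which already lists the closure conditions for any two-form of the shape \eqref{Cartan form shape 2}. The form $\Omega$ in \eqref{constr HH1} is exactly of that shape, with $\lambda_{DB}=\oneqtr\left(V_D(\lambda_B)-V_B(\lambda_D)\right)$. It is written relative to the fixed \SODE\ $\Gamma$, which is tangent to $\tE$ and agrees with $\PF\tGamma$ there. So I would first check that \eqref{constr HH1} fits the hypotheses of Proposition \ref{shape of Omega}: it satisfies $\Omega(V_1,V_2)=0$, and $\Omega(V_A,\Gamma)=0$ is immediate because $\Gamma\hook\Omega=\lambda_B\theta^B$ has no $\psi$ component. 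The six closure identities $\d\Omega(\cdot,\cdot,\cdot)$ on the adapted frame $\{\Gamma,V_A,H_A\}$ are then obtained by the same computation as in Proposition \ref{Omega HH}, since that computation used only the algebraic shape of $\Omega$ and the bracket relations of section \ref{uncontr sodes}, and never an Euler--Lagrange property.

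Concretely, I would evaluate $\d\Omega(X,Y,Z)=\sum_{\rm cyc} X(\Omega(Y,Z)) - \sum \Omega([X,Y],Z)$ on each triple of frame fields. The values I would use are
\[
\Omega(V_A,H_B)=g_{AB},\quad \Omega(\Gamma,H_B)=\lambda_B,\quad \Omega(H_A,H_B)=2\lambda_{AB},
\]
with all other pairs of frame fields giving zero. The brackets $[\Gamma,H_A]$, $[\Gamma,V_A]$, $[H_A,V_B]$ and $[H_A,H_B]=R^C_{AB}V_C$ then produce the terms involving $\Gamma^C_A$, $\Phi^C_A$, $V_A(\Gamma^D_B)$ and $R^D_{BC}$.

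Two of the conditions hold by assumption. The $(\Gamma,V_A,V_B)$ condition is just $g_{AB}=g_{BA}$. The $(H_A,V_B,V_C)$ condition is $V_C(g_{BA})-V_B(g_{CA})=0$, which is the imposed symmetry $\partial g_{AB}/\partial u^D=\partial g_{AD}/\partial u^B$.

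The final line of the proposition is not a closure condition. It is the $\tE$ requirement \eqref{constr HH2}, i.e. condition 2 of Theorem \ref{CIP}, restated with $\Gamma\hook\Omega=\lambda_B\theta^B$. Rewriting $\theta^\alpha$ in the basis $\{\theta^a,\eta^\alpha\}$ shows it is equivalent to $\overline{\lambda_\alpha}$ being the multipliers together with $\overline{\lambda_b}=-\lambda_\beta\pd{\Psi^\beta}{u^b}$. This follows the same component computation as in the proof of Proposition \ref{Prop2.1}.

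For the redundancy of the last two closure conditions I would appeal to the appendix argument cited in Proposition \ref{Omega HH}. That argument should apply unchanged, since it is presumably a Jacobi-identity or $\lie{\Gamma}$-propagation argument: the $3$-forms $\Gamma\hook\d\Omega$ and $\lie\Gamma\d\Omega$ vanish once the first four conditions hold, and this forces the $(V,H,H)$ and $(H,H,H)$ components to vanish as well. Nothing in that argument depends on constraints, because $\Gamma$ here is an honest \SODE\ on $E$.

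The main obstacle is the bookkeeping in $\d\Omega(\Gamma,H_A,H_B)$. There one must differentiate $\lambda_{AB}$ along $\Gamma$, use $[\Gamma,H_A]=\Gamma^C_AH_C+\Phi^C_AV_C$, and track the factors of $2$ coming from the $\theta^D\wedge\theta^B$ normalisation. I would do this step carefully and check it against the unconstrained case $\lambda_A=0$.
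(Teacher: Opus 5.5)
Your proposal is correct and takes the same route as the paper: the paper likewise obtains these conditions as the closure identities of Proposition \ref{Omega HH} applied to the form \eqref{constr HH1}, adds the $\tE$ requirement \eqref{constr HH2}, and cites the appendix's $\lie{\Gamma}$-propagation argument for redundancy; your frame values do reproduce exactly the listed expressions. One small slip: $\Gamma\hook\d\Omega$ is a two-form, not a three-form, and the vanishing three-form used in the appendix is $\lie{\Gamma}\d\Omega=\d(\Gamma\hook\d\Omega)$.
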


Finally, we identify some possible additional ansatz on the \SODE\ $\Gamma$ to strengthen theorem \ref{CIP} and proposition \ref{nonholHHcond}. 
\begin{itemize}

\item[1.]In condition 2 of theorem \ref{CIP} we could restrict to \SODEs\ with first integrals $G^\alpha:=\dot{x}^\alpha-\Psi^\alpha$ which is a stronger condition than just tangency to 
    $\tE;$ this should help in concrete examples. 
\item[2.] We could restrict to \SODEs\ and Lagrangians $L$ with $V_A(E_B(L))=V_B(E_A(L))$ so that $\lambda_{AB}=0$ in the expression \eqref{constr HH1} for $\Omega.$ See section \ref{dissipative}.

\item[3.] Where viable, we could choose a $\Gamma$ which is the Euler-Lagrange field for at least one Lagrangian $L$, then if $\Omega$ satisfies the conditions of theorem \ref{CIP} so will $\Omega+\d\theta_L,$ generating more Lagrange-d'Alembert Lagrangians.

\end{itemize}

\subsection{Non-existence and existence}\label{sect5.2}

The question of whether a given constrained system admits a Lagrange-d'Alembert 
description at all is both more computationally difficult than its unconstrained counterpart and more ambiguous. 
As an example of this ambiguity, and in contrast to the example in section \ref{motivation}, two distinct physical Lagrangians with the same 
Euler-Lagrange equations, that is, the same unconstrained forces, can have distinct Lagrange-d'Alembert 
fields arising from the same constraint, as we will show. This is highly undesirable, at least in cases where the 
constraint can be switched on or off or modulated. Now there has been considerable discussion of `equivalent Lagrangians' in the unconstrained case (see, for example, \cite{MFLMR90,HS81,HH81}), and of avoidance 
of other types of ambiguity by various Lagrangian selection rules, group actions, etcetera (\cite{CP88,SP10}). 
For this paper we will simply restrict our existence question to the class of physical Lagrangians 
whose kinetic energy metric is exactly the `natural' metric of the underlying configuration manifold, even though it includes the cases in section \ref{motivation}. 
This should cover the many physical examples of non-holonomic systems.

\begin{xmpl}
This is the simplest example of the ambiguity described above. Consider the free particle, 
$\ddot x^A=0$, on $\R^n$ with the standard Euclidean metric, Lagrangian 
$L:=\onehalf\delta_{AB}\dot x^A\dot x^B.$ Applying an arbitrary non-holonomic constraint 
$\dot x^\alpha=\Psi^\alpha$, 
the constrained dynamics are given by the corresponding Lagrange-d'Alembert equations: 
\[
\ddot{x}^a=-\lambda_\alpha\pd{\Psi^\alpha}{\dot x^a},\ \lambda_\alpha:=\frac{d\Psi^\alpha}{dt},\ 
\dot x^\alpha=\Psi^\alpha.
\]

Now use the weighted metric $g_{AB}:=\text{diag}(1,\dots,1,2,\dots,2)$ with $m$ 1's and $(n-m)$ 2's. 
The corresponding, regular, kinetic energy Lagrangian, $L_g,$ still produces the free particle, however 
the Lagrange-d'Alembert equations for the same constraint are now
\[
\ddot{x}^a=-\mu_\alpha\pd{\Psi^\alpha}{\dot x^a},\ \mu_\alpha:=2\frac{d\Psi^\alpha}{dt}=2\lambda_\alpha,\
 \dot x^\alpha=\Psi^\alpha.
\]
Both Lagrangians have non-singular $k$ matrices for generic constraints.
\end{xmpl}

\begin{xmpl}
Another innocuous degeneracy in the unconstrained inverse problem arises from the addition of a 
degenerate Lagrangian for 
a given system to a regular one where the sum is regular. This has no impact on the 
Euler-Lagrange equations 
but should be avoided in the constrained case. We invite the reader to consider the 
regular Lagrangian on Euclidean $\R^3$
\[
L_0:=\onehalf\delta_{AB}\dot{x}^A\dot{x}^B - V_1(x)-V_2(y)-V_3(z)
\]
and the degenerate Lagrangian 
\[
L_{xy}:=\onehalf(\dot{x}^2+\dot{y}^2) - V_1(x)-V_2(y).
\]
It is straightforward to show that $L_0+L_{xy}$ is a regular Lagrangian for the Euler-Lagrange 
system of $L_0,$ however, for the arbitrary constraint $\dot{z}=\Psi$,
the Lagrange-d'Alembert fields are distinct (and $L_0$ and $L_0+L_{xy}$ both have non-singular $k$ matrices).
\end{xmpl}
\begin{comment*}
Perhaps the most notorious example of this last type is that of Henneaux and Shepley \cite{HS81}
 who showed that the Lagrangian 
\[
L_\gamma:=L+\frac{\gamma J}{r^2},\ L:=T-\mu/r,\ J:=\|\vec{r}\times\vec{v}\|,\ \gamma>0
\]
for the Coulomb problem produced a quantum mechanical hydrogen atom without energy level degeneracy. 
It might be interesting to see what 
impact the addition of the degenerate term has in classical constrained cases.

\end{comment*}

For these reasons we will restrict our search for constrained systems {\em without} a 
Lagrange-d'Alembert description to our class of canonical physical Lagrangians.

Now we turn to the important class of non-integrable, affine constrained systems admitting no 
physical Lagrangians on $\R^3$ with the Euclidean metric.

Consider a physical Lagrangian  
\begin{equation}\label{phys_L}
L=\onehalf\delta_{AB}\dot{x}^A\dot{x}^B+M_A\dot{x}^B-V
\end{equation}
where $M_A, V$ are functions of $t,x^A$ on $\R\times\R^3.$ Suppose that we have a constraint 
$\Psi$ affine in the $\dot{x}^a$ with coefficients on $\R\times\R^3$ and non-integrable.
We will assume that the $k$-matrix for $(L,\Psi)$ is non-singular, the condition for which is
\[
|k_{ab}|=\left(1+\left(\pd{\Psi}{\dot{x}}\right)^2\right)\left(1+\left(\pd{\Psi}{\dot{y}}\right)^2\right)
-\pd{\Psi}{\dot{x}}\pd{\Psi}{\dot{y}}\neq 0.
\]
It is straightforward to show that the Lagrange multiplier and the 
Lagrange-d'Alembert equations for the given constraint are
\begin{align*}
\lambda &=\frac{d}{dt}(\Psi+M_z)-\left(\pd{M_a}{z}\dot{x}^a+\pd{M_z}{z}\Psi\right)-\pd{V}{z}\\
\lambda\pd{\Psi}{\dot{x}^a}&=\frac{d}{dt}(\dot{x}^a+M_a)-\left(\pd{M_b}{x^a}\dot{x}^b
+\pd{M_z}{x^a}\Psi\right) - \pd{V}{x^a}
\end{align*}
We see that, apart from the acceleration terms, $\ddot{x}^a$, the Lagrange multiplier is 
quadratic in the $\dot{x}^a$ as are the two Lagrange-d'Alembert equations.
So in this non-degenerate case the accelerations are polynomial in the $\dot{x}^a$ of at 
most degree two.
So we have
\begin{propn}
Suppose that $\tGamma$ is a fixed, constrained \SODE\ on Euclidean $\R^3,$ where the constraint 
$\Psi$ is affine in the $\dot{x}^a$ with coefficients on $\R\times\R^3,$ and non-integrable. 
Then a necessary condition for $\tGamma$ to be the 
Lagrange-d'Alembert field of the physical Lagrangian \eqref{phys_L}, assumed to 
have non-degenerate $k$-matrix, is that the $\tilde{F}^a$ are polynomial in the $\dot{x}^a,$ 
with coefficients on $\R\times\R^3,$ of at most degree two.
\end{propn}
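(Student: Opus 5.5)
The plan is to write out the Lagrange-d'Alembert equations \eqref{LAeqn} for the physical Lagrangian \eqref{phys_L} explicitly. Here $n=3$, $m=2$, and the single constraint is $\dot z=\Psi=A_a(t,x^B)\dot x^a+B(t,x^B)$. Then we solve for the accelerations using the non-singularity of the $k$-matrix. By Proposition \ref{Prop2.1}, if $\tGamma$ is the Lagrange-d'Alembert field of $L$ then $\tGamma=\tGamma_L$ is uniquely determined by these equations. So it suffices to show that the $\tilde F^a$ obtained from them are polynomials of degree at most two in $\dot x^a$, with coefficients on $\R\times\R^3$.

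First I compute $\overline{\partial L/\partial\dot x^a}=\dot x^a+M_a$ and $\overline{\partial L/\partial\dot z}=\Psi+M_z$. Taking the total time derivative along the constrained motion uses $\dot z=\Psi$ and gives
\[
\frac{d}{dt}(\Psi+M_z)=A_a\ddot x^a+\Big(\pd{A_a}{t}+\pd{A_a}{x^b}\dot x^b+\pd{A_a}{z}\Psi\Big)\dot x^a+\pd{B}{t}+\pd{B}{x^b}\dot x^b+\pd{B}{z}\Psi+\frac{d M_z}{dt}.
\]
Every non-acceleration term here is a polynomial in $\dot x^a$ of degree at most two. This holds because $\Psi$ is affine in $\dot x^a$ and all coefficients depend only on $(t,x^B)$. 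The same is true of $\frac{d}{dt}(\dot x^a+M_a)-\ddot x^a$, of the terms $\pd{M_b}{x^a}\dot x^b+\pd{M_z}{x^a}\Psi$, and of the gradient of $V$. This gives the displayed expressions for $\lambda$ and for $\lambda\,\pd{\Psi}{\dot x^a}=\lambda A_a$.

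Next I eliminate $\lambda$. Substituting the first equation into the second gives the linear system
\[
\ddot x^a+A_aA_b\ddot x^b=Q_a(t,x^B,\dot x^b),
\]
where each $Q_a$ is a polynomial in $\dot x^b$ of degree at most two. The coefficient matrix $\delta_{ab}+A_aA_b$ is exactly the $k$-matrix $k_{ab}=\PB\d\theta_L(\tV_a,\tH_b)$. For affine constraints \eqref{kAB} lets me compute it from $\bar L$, and it depends only on $(t,x^B)$ because $A_a$ does. The key point is that the $k$-matrix is independent of the velocities: affinity of $\Psi$ makes $\pd{\Psi}{\dot x^a}$ velocity-free, and the Hessian of $L$ is the constant $\delta_{AB}$. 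Since the matrix is assumed non-singular, inverting it gives $\tilde F^a=k^{ab}Q_b$. The inverse $k^{ab}$ has coefficients on $\R\times\R^3$, so each $\tilde F^a$ is a polynomial in $\dot x^a$ of degree at most two with coefficients on $\R\times\R^3$.

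The main obstacle is bookkeeping rather than conceptual. I must check carefully that no velocity dependence enters the coefficient matrix through $\frac{d}{dt}\Psi$, for example from $\pd{A_a}{z}\Psi\,\dot x^a$. This term is indeed quadratic in $\dot x^a$ but involves no accelerations, so the acceleration coefficients stay velocity-independent. I also note that non-integrability is not used in the degree count: it only ensures the situation is genuinely nonholonomic. Finally, comparison with the given $\tGamma$ through uniqueness (Proposition \ref{Prop2.1}) yields the necessary condition.
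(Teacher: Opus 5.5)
Your proposal is correct and follows the paper's own argument. You write out the Lagrange-d'Alembert equations for \eqref{phys_L}, observe that apart from the $\ddot x^a$ terms both $\lambda$ and the two equations are at most quadratic in the $\dot x^a$, and solve for the accelerations using the non-singular $k$-matrix. You usefully make explicit what the paper leaves implicit: the coefficient matrix of the accelerations is the velocity-independent $k_{ab}=\delta_{ab}+A_aA_b$, so inverting it cannot raise the degree. (This sign comes from $E_a(L)=-\lambda A_a$ in \eqref{LAeqn}; the paper's displayed equations before the proposition, which you say you reproduce, drop that minus sign, but the slip does not affect your argument.)
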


We close this section with an interesting class of non-holonomic inverse problems arising from expressions \eqref{bar K1} to \eqref{bar K2} for which solutions exist.

Given a pair $(\tGamma,\Psi)$ with $\Psi$ not affine and $\tilde{K}^\alpha_a=0,\check R\neq 0$ we could restrict the constrained inverse problem to finding Lagrangians $L$ with $\pd{\bar L}{x^\alpha}=0$ using \eqref{bar K1} to \eqref{bar K2}. Assuming  $\tGamma$ is Lagrange-d'Alembert for $L$ these expressions give

\begin{align}\label{bar K3}
&\d\tth_L(\tGamma, \tH_a)=\d\tth_{\bar{L}}(\tGamma, \tH_a)=0,\\ 
&\d\tth_L(\tGamma, \vf{x^\alpha})=\tGamma\left(\overline{\pd{L }{u^\alpha}}\right)-\overline{\pd{L}{x^\alpha}}=\lambda_\alpha, \\
&\d\tth_L(\tH_a,\tH_b)=-\overline{\pd{L}{u^\alpha}}{\check R}^\alpha_{ab},\\ 
&\d\tth_L(\tH_a,\tV_b)=\d\tth_{\bar{L}}(\tH_a,\tV_b)+\overline{\pd{K}{u^\alpha}}\frac{\partial^2\Psi^\alpha}{\partial u^a\partial u^b},\\
&\d\tth_L(\tH_a,\vf{x^\alpha})=\tH_a\left(\overline{\pd{L}{u^\alpha}}\right)+\overline{\pd{L}{u^\beta}}\frac{\partial^2\Psi^\beta}{\partial u^a\partial x^\alpha}, \\
&\d\tth_L(\tV_a,\vf{x^\alpha})=\tV_a\left(\overline{\pd{L}{u^\alpha}}\right),
\end{align}
with
\begin{equation}\label{bar K4}
\d\tth_{\bar{L}}=\frac{\partial^2\bar{L}}{\partial u^a\partial u^b}\tpsi^a\wedge\tth^b\ \text{and}\ \tGamma\left(\pd{\bar L}{u^a}\right)-\pd{\bar L}{x^a}=0.
\end{equation}
We see that $\bar{L}$, assumed independent of $x^\alpha$, can be determined by solving the `unconstrained inverse problem' for $\tGamma$ with the resulting construction of all $L$ following from the imposition of regularity and the remaining equations.

\begin{xmpl}\label{xmpl4}
On $\R^3$ consider the non-holonomic system $(\tGamma,\Psi)$ with  
\[
\dot z=\Psi:=z\dot{x}\dot{y}\ \text{and}\ \tGamma:=\vf{t}+\dot{x}\vf{x}+\dot{y}\vf{y}+\Psi\vf{z}
\]
(so that $\ddot{x}=0=\ddot{y}$). It is straightforward to check that $\tilde{K}^z_a=0,$ also that $\tGamma$ and, by the Helmholtz conditions or design, $\bar{L}:=\onehalf(\dot{x}^2+\dot{y}^2) +\onehalf(\dot{x}\dot{y})^2$ satisfy
\[
\tGamma\left(\pd{\bar{L}}{\dot{x}^a}\right)-\pd{\bar{L}}{x^a}=0.
\]
A second simple check shows that $\tGamma$ is the Lagrange-d'Alembert field (with $\lambda=0$) for
\[
L:=\onehalf\left(\dot{x}^2+\dot{y}^2+\left(\frac{\dot{z}}{z}\right)^2\right).
\]
The $k$-regularity of $\bar{L}$ and the regularity of $L$ are easily checked.

Note: this example was constructed by first fixing $\Psi$ and then using $\tilde{K}^z_a=0$ to determine $\tGamma.$ Finding a suitable $\bar{L}$ did not require the Helmholtz conditions because of the simplicity of $\tGamma;$ choosing an appropriate $L$ was also easy.
\end{xmpl}

\subsection{Relation to dissipative Lagrangian systems}  \label{dissipative}

We return to the Helmholtz conditions, as they were formulated for the constrained inverse problem in proposition~\ref{nonholHHcond}. As we already know, there may be  distinct, equivalent Lagrangians for a constrained system.  Because of their relative computational simplicity we will look first for Lagrangians which produce $\lambda_{AB} =0$. In a second step, we will consider those which have zero multipliers, $\lambda_A=0$ (as in corollary \ref{zero multipliers}).

If we set $\lambda_{AB} =0$ in the  Helmholtz conditions of proposition~\ref{Omega HH} (or proposition~\ref{nonholHHcond}), the  {resulting} conditions have already been studied in the context of (unconstrained) dissipative systems. The restriction $\lambda_{AB} =0$ means that $V_D(\lambda_B)=V_B(\lambda_D)$, 
 and therefore that there must exist some function $D$ on $E$ such that  $\lambda_A=V_A(D)$. 
The four non-redundant Helmholtz conditions then become 
\begin{align}
&0=g_{AB}-g_{BA}, \label{set2a}\\
&0= \Gamma(g_{AB})-\Gamma^C_A g_{CB}-\Gamma^C_B g_{CA}-V_AV_B(D), \\
&0= \Phi^C_B g_{CA}-\Phi^C_A g_{CB}  -H_A(V_B(D))+H_B(V_A(D))
\\
&0 = V_C(g_{BA})-V_B(g_{CA}) \label{set2b}
\end{align}
These conditions are exactly those that {also} appear in \cite{TWM} (see either equations (30-32) therein, or its Theorem~1). It is shown there that, when $(g_{AB})$ is  non-singular, 
the (unconstrained) \SODE\ $\Gamma$ is dissipative, in the sense that the differential equations that define its base integral curves can be brought in the form
\begin{equation} \label{dissEL}
\frac{d}{dt}\left(\pd{L}{\dot x^A}\right)-\pd{L}{x^A} = \pd{D}{{\dot x}^A},
\end{equation}
for some regular Lagrangian $L$ and {\em dissipative function} $D$. 

In fact, the unknown function $D$ can be eliminated from these conditions altogether by using the redundant Helmholtz 
ones: theorem~3 of \cite{TWM} states that \SODE\ $\Gamma$ is  dissipative if, and only if, there exists $g_{AB}$ satisfying 
\begin{align}
&0=g_{AB}-g_{BA},\label{set1a}\\
&0= V_C(g_{BA})-V_B(g_{CA}) \\
& 0 = H_C(g_{AB}) -H_B(g_{AC}) - g_{DB}V_A(\Gamma^D_C) + g_{DC}V_A(\Gamma^D_B)
  \\
&0= R^D_{BC}g_{DA}+R^D_{AB}g_{DC}+R^D_{CA}g_{DB} \label{set1b},
\end{align}
The last two of these are indeed the redundant conditions of proposition~\ref{Omega HH} with $\lambda_{AB}=0$.

In the examples below we will use these ideas to {obtain} a solution of the constrained inverse problem (for a \SODE\ $\Gamma$ that is tangent to the constraints and that restricts to the given nonholonomic vector field $\tilde\Gamma$). {First, we bring} the \SODE\ $\Gamma$ into the form of a dissipative Lagrangian system. {Second}, we check if it restricts to a nonholonomic Lagrangian system, by imposing 
\begin{equation} \label{final}
\Gamma\hook \Omega-\lambda_\alpha(\theta^\alpha-\pd{\Psi^\alpha}{u^a}\theta^a)=\ell_A\theta^A
\end{equation}
{at the very end}.

\begin{xmpl} The nonholonomic particle on $\R^3$. This instructive example has been extensively used throughout the literature, but it can originally be found in \cite{Rosenberg}.  The Lagrangian is  $L=\onehalf({\dot x}^2+{\dot y}^2+{\dot z}^2)$ and the constraint is $\dot z = -x\dot y$.
The nonholonomic vector field is 
\begin{equation} \label{nonholpartvf}
\tilde\Gamma= \pd{}{t}+{\dot x}\pd{}{x}+{\dot y}\pd{}{y}-x{\dot y}\pd{}{z}+0\pd{}{\dot x} - \left(\frac{x\dot x\dot y}{1+x^2}\right)\pd{}{\dot y} 
\end{equation}
and the corresponding Lagrange multiplier is $\lambda=-\frac{1}{1+x^2}{\dot x}{\dot y}$. 

We will look for alternative Lagrangians for this system. For that purpose, we will consider a whole class of  \SODEs\ with the properties that they are tangent to the constraint,  that they restrict to the 
nonholonomic vector field and that they are dissipative. For the quadratic \SODEs\ below, the first two conditions are clearly satisfied: 
\begin{eqnarray*}
\Gamma^1&=&\pd{}{t}+{\dot x}\pd{}{x}+{\dot y}\pd{}{y}+{\dot z}\pd{}{z}+
   \frac{(a_1{\dot x}+b_1{\dot y}+c_1{\dot z})({\dot z}+x{\dot y}) }{1+x^2}
\pd{}{\dot x} \\&& \hspace*{-15mm} + \left(\frac{(a_2{\dot x}+b_2{\dot y}+c_2{\dot z})({\dot z}+x{\dot y})}{1+x^2}-\frac{x\dot x\dot y}{1+x^2}\right)\pd{}{\dot y}
+\left( \frac{(a_3{\dot x}+b_3{\dot y}+c_3{\dot z})({\dot z}+x{\dot y})}{1+x^2}- \frac{\dot x\dot y}{1+x^2}\right)\pd{}{\dot z},
\end{eqnarray*}
 where all $a_i,b_i,c_i$ are constants, with $b_1\neq 0$. 

We will use the following strategy to find an alternative Lagrangian with the same Lagrange-d'Alembert  equations. First, we will select from our 
class of \SODEs\ those that can be brought in a dissipative form (i.e.\ we only consider the conditions of proposition~\ref{nonholHHcond}, where we have already assumed that $\lambda_{AB}=0$). To do so, we  explore the 
conditions (\ref{set1a}--\ref{set1b}) to find a suitable class of multipliers  $(g_{AB})$ and then we use conditions (\ref{set2a}--\ref{set2b}) as a set 
of determining equations for the dissipative function $D$. Once we know  which of the \SODEs\ in our class are dissipative, we finally impose the 
tangency condition on the Lagrange multiplier. According to  proposition~\ref{nonholHHcond}, the resulting multiplier 
$(g_{AB})$ provides a Lagrangian that solves our inverse problem. 

In full generality, this is a very difficult task and we will use an extra ansatz: given the shape of the standard
Lagrangian, we restrict to Lagrangians whose Hessian is a pseudo-Riemannian metric that is diagonal and invariant under translations of the
coordinates $y$ and $z$:
\begin{equation*} \label{gansatz}
g^1=\begin{bmatrix} g^1_{11}(x) &0&0\\0&g^1_{22}(x)&0\\0&0&g^1_{33}(x) \end{bmatrix}.
\end{equation*}

With this assumption, an easy calculation (we used Maple 2025 \cite{Maple2025}) shows that the conditions (\ref{set1a}--\ref{set1b}) can only be satisfied for
\[
g^1_{11} = C,  g^1_{22}= g^1_{33}=-b_1 C,
\]
($C$ is a non-zero constant) and only in the case where all constants are zero, except for $a_2=-1$ and for $b_1$ (which is otherwise arbitrary), i.e. for \SODEs\ of the
type
\begin{eqnarray*}
\Gamma^1&=&\pd{}{t}+{\dot x}\pd{}{x}+{\dot y}\pd{}{y}+{\dot z}\pd{}{z}+
   \frac{b_1{\dot y}({\dot z}+x{\dot y}) }{1+x^2}
\pd{}{\dot x}    - \frac{( {\dot z}+2x{\dot y}){\dot x}}{1+x^2}\pd{}{\dot y} - \frac{\dot x\dot y}{1+x^2} \pd{}{\dot z}.
\end{eqnarray*}

When we also assume that the dissipative function $D^1$ is independent of $y$ and $z$, conditions (\ref{set2a}--\ref{set2b}) are satisfied only if $D^1$ is of the type
\[
D^1= b_1C( x {\dot y}  + {\dot z})\frac{{\dot x}{\dot y}}{1+ x^2} + f(x){\dot x}+ A{\dot y}+ B{\dot z}  + g(x).
\]

 {Next, it can be shown that the only functions $L^1$ (with the above Hessian $g^1$) and $D^1$ that match the \SODEs\ of our class are}
\[
L^1= \onehalf C({\dot x}^2-b_1{\dot y}^2-b_1{\dot z}^2)+ \kappa_1(x)\dot x+ K_2\dot y+K_3\dot z+\kappa_4(x)
\]
and
\[
D^1= b_1C( x {\dot y}  + {\dot z})\frac{{\dot x}{\dot y}}{1+ x^2} -\kappa_4' (x){\dot x}+g(x).
\]
{In that case, using $\lambda^1_A=E_A(L^1)=V_A(D^1)$,}
\[
\lambda^1_1 = b_1C( x {\dot y}  + {\dot z})\frac{{\dot y}}{1+ x^2} -\kappa_4' (x) ,\quad \lambda^1_2= \frac{b_1C {\dot x}}{1+x^2}  \left(  2x  {\dot y}   + {\dot z}  \right),\quad \lambda^1_3=
b_1C\frac{{\dot x}{\dot y}}{1+ x^2},
\]
and thus indeed $\lambda^1_{AB}=0$. 

So far, we have only established that the \SODE\ is dissipative (on the full space). We still need to see which of the candidates satisfies the
last condition of our Proposition \ref{nonholHHcond}, namely (\ref{final}). This requires $\kappa_4(x)=K_4$. In the end, our alternative Lagrangians for the nonholonomic particle are (up
to a constant):
\[
L^1= \onehalf C({\dot x}^2-b_1{\dot y}^2-b_1{\dot z}^2)+ \kappa_1(x)\dot x+ K_2\dot y+K_3\dot z.
\]
The corresponding Lagrange multiplier is
\[
\lambda^1_3 = \left.\left(\frac{d}{dt}\left(\pd{L^1}{\dot z}\right)-\pd{L^1}{z}\right)\right|_{{\dot z}=-x{\dot y}}
  = \frac{Cb_1}{1+x^2}{\dot x}{\dot y}.
\]
 The standard Lagrangian $L$ of the nonholonomic particle is clearly one of these Lagrangians $L^1$, after setting $C=1$ and $b_1=-1$.  All the other Lagrangians $L^1$ have distinct and unrelated (unconstrained) Euler-Lagrange fields. It is the combination of both the Lagrangian $L^1$ with the appropriate multiplier $\lambda^1_3$ that returns the same nonholonomic vector field $\tilde\Gamma$ in \eqref{nonholpartvf}.
\end{xmpl}

\begin{xmpl}The vertically rolling disk. This example has been discussed in many papers and books. Here, we follow the notations as in the figure of \cite{Bloch03}, section 1.4. The Lagrangian of the  {nonholonomic} problem is
\[
L=\onehalf m ({\dot x}^2 + {\dot y}^2) + \onehalf I {\dot\theta}^2 + \onehalf J {\dot \phi}^2
\]
and the constraints are ${\dot x} = R \cos(\phi){\dot\theta}$ and ${\dot y} = R \sin(\phi){\dot\theta}$. The nonholonomic vector field is  
\[
\tilde\Gamma =\vf{t} + R \cos(\phi){\dot\theta} \pd{}{x} + R \sin(\phi){\dot\theta} \pd{}{y} + {\dot\theta} \pd{}{\theta} + {\dot\phi} \pd{}{\phi}   + 0 \pd{}{\dot\theta} + 0 \pd{}{\dot\phi}.
\]
To find alternative Lagrangians, we will use the parametrised class
\begin{eqnarray*}
\Gamma^2 &=&\pd{}{t}+{\dot x} \pd{}{x} + {\dot y} \pd{}{y} + {\dot\theta} \pd{}{\theta} + {\dot\phi} \pd{}{\phi} -R\sin(\phi)\dot\phi\dot\theta \pd{}{\dot x} 
+ R\cos(\phi)\dot\phi\dot\theta \pd{}{\dot y} \\ && + (a_1{\dot x} + b_1 {\dot y}+ c_1 {\dot\theta} + d_1 {\dot\phi})(\cos(\phi){\dot y} 
- \sin(\phi){\dot x}) \pd{}{\dot\theta}\\&& + (a_2{\dot x} + b_2 {\dot y}+ c_2 {\dot\theta} + d_2 {\dot\phi})(\cos(\phi){\dot y} 
- \sin(\phi){\dot x}) \pd{}{\dot\phi},
\end{eqnarray*}
whose members all restrict to $\tilde\Gamma$ on the constraint manifold.

Again, looking only for diagonal multipliers
\[
g^2=\begin{bmatrix} g^2_{11}(\phi) &0&0&0\\0&g^2_{22}(\phi)&0&0\\0&0&g^2_{33}(\phi)&0 \\ 0&0&0&g^2_{44}(\phi)  \end{bmatrix},
\]
one can show that the conditions (\ref{set1a}--\ref{set1b}) require zero constants except $d_1$ and $c_2$. In that case, we find

\[
g^2_{11} = g^2_{22} = \frac{C}{R}, \quad g^2_{33}= \frac{C}{d_1}, \quad g^2_{44}=\frac{C}{c_2},
\]
where  $C$ is an arbitrary, non-zero constant. This means our \SODEs\ are
\begin{eqnarray*}
\Gamma^2 &=&\pd{}{t}+ {\dot x} \pd{}{x} + {\dot y} \pd{}{y} + {\dot\theta} \pd{}{\theta} + {\dot\phi} \pd{}{\phi} -R\sin(\phi)\dot\phi\dot\theta \pd{}{\dot x} + R\cos(\phi)\dot\phi\dot\theta \pd{}{\dot y} \\ && +  d_1 {\dot\phi}(\cos(\phi){\dot y} - \sin(\phi){\dot x}) \pd{}{\dot\theta} +   c_2 {\dot\theta}  (\cos(\phi){\dot y} - \sin(\phi){\dot x}) \pd{}{\dot\phi}.
\end{eqnarray*}

This requires that the dissipative functions $D^2$ satisfying conditions (\ref{set2a}--\ref{set2b}) are
\[
D^2= C(\cos(\phi){\dot y} - \sin(\phi){\dot x}){\dot\theta}{\dot\phi}.
\]

The Lagrangians whose Hessian is $g^2$ and whose  dissipative equations return $\Gamma^2$ are then
\[
L^2=\onehalf \frac{C}{R}({\dot x}^2 + {\dot y}^2) + \onehalf \frac{C}{d_1} {\dot\theta}^2 + \onehalf \frac{C}{c_2}{\dot\phi}^2 + A {\dot x} + B {\dot y} + E {\dot\theta} + f(\phi){\dot\phi}.
\]
Finally, the Lagrange multipliers for the resulting nonholonomic are
\begin{eqnarray*}
\lambda^2_{x} &=& \left(\frac{d}{dt}\left(\pd{L_1}{\dot x}\right)-\pd{L_1}{x}\right)_{\tilde E}
  = -C\sin(\phi){\dot\theta}{\dot\phi} ,\\
\lambda^2_{y} &=& \left(\frac{d}{dt}\left(\pd{L_1}{\dot x}\right)-\pd{L_1}{y}\right)_{\tilde E} =  C\cos(\phi){\dot\theta}{\dot\phi}
.
\end{eqnarray*}
The  Lagrangian $L$ of the physical nonholonomic problem and its multipliers correspond to the case where $C=mR$, $d_1=\frac{mR}{I}$ and $c_2 = \frac{mR}{J}$. 
\end{xmpl}

\begin{comment*} In some examples,  we may be able to realise the  dissipative 
Helmholtz conditions (\ref{set2a})--(\ref{set2b})   
with a multiplier { $g_{AB}(t,x,\dot x)$} and with a dissipative function $D$ that only depends on $t$ and the coordinates $x^A$, i.e.\ $D=f(t,x)$. If so, we know from (\ref{dissEL}) that the \SODE\ $\Gamma$ is an Euler-Lagrange \SODE\  and so the Lagrange-d'Alembert equations will have zero multipliers,  $\lambda_A=0$  (see corollary \ref{zero multipliers}). This means that those integral curves that satisfy the constraints (amongst many others), will also satisfy the equations for the integral curves of $\tilde\Gamma$ (i.e.\ the 
nonholonomic trajectories of the given mechanical system). This type of questions have already appeared in the literature under the name {\em 
Hamiltonization of nonholonomic systems}, see e.g. \cite{BlFeMe09,BFMM15,TomMal}. In particular in \cite{BlFeMe09}, the (unconstrained) Helmholtz conditions 
have been used to find alternative Lagrangians. {These alternative Lagrangians} are therefore all examples for our {current} constrained inverse problem, and, by construction, they will have the property 
that the  Lagrange multipliers $\lambda_A$ (corresponding to the alternative Lagrangians) for the nonholonomic trajectories are zero. 
 \end{comment*}

\begin{xmpl} The nonholonomic particle (again). It has been shown in  \cite{BlFeMe09}  that the \SODE
\[
\Gamma^3 = \pd{}{t}+{\dot x}\pd{}{x}+{\dot y}\pd{}{y}+  {\dot z}\pd{}{z}+0\pd{}{\dot x} - \left(\frac{x\dot x\dot y}{1+x^2}\right)\pd{}{\dot y}
- \frac{\dot x\dot z}{x(1+x^2)}\pd{}{\dot z}
\]
 has the property that it restricts on the constraint to the vector field $\tilde\Gamma$ (in expression \eqref{nonholpartvf}) and that it has  the additional property that it is of (unconstrained) Euler-Lagrange 
type for the Lagrangians 
\[
L^3 =  \rho({\dot x}) + \onehalf\sqrt{1+x^2}\left(C_1\frac{{\dot y}^2}{\dot x} +C_2\frac{{\dot z}^2}{x\dot x}\right)
\]
(or: with dissipative function $D=0$). Here $\rho$ is any function with $\frac{d^2\rho}{d{\dot x}^2}\neq 0$ and $C_1,C_2$ are constants.
Indeed, one can easily compute the Lagrange-d'Alembert equations of these Lagrangians, obtaining the nonholonomic
particle with Lagrange multiplier $\lambda^3_1 = 0.$
\end{xmpl}

\section{Outlook}

In this paper we have addressed several aspects of the inverse problem of the calculus of variations. We have first investigated the general relationship between SODEs and Cartan two-forms in section~\ref{sec3}, both in the absence and in the presence of nonholonomic constraints. This allowed us to re-examine the unconstrained inverse problem (in section~\ref{sec4}) and to treat its extension to constrained systems on the same footing (in Section~\ref{sec5}). For both inverse problems, we are able to recast the problem in terms of a closed two-forms and derive appropriate  Helmholtz conditions. 

What we did not attempt for the constrained inverse problem is a Douglas-style classification of constrained systems, according to the existence and uniqueness of Lagrangians. This looks to be a very challenging problem, for the following reasons. Recall what we have said about the solution of the problem in section~\ref{sec42}. In the constrained case, we would need to consider the normal forms of the non-holonomic curvatures and the Jacobi endomorphism, both of which are on $\tE$ not $E$. When related back to properties of two-forms, this would involve the two-form $\tOmega=\PF\d\theta_L$ rather than  $\Omega=\d\theta_L$, which is now central in our formulation of the Helmholtz conditions (proposition~\ref{nonholHHcond}). In addition, the simplest nondegeneracy condition, $\wedge^m\tOmega_\mathcal{D}\neq 0,$ (where  $\mathcal{D}:=\Sp\{\hGamma,\tH_a,\tV_a\}$) is on the constraint submanifold $\tE$. This is also much simpler from an EDS point of view.  Then, a natural inverse problem would be: given a \SODE\ $\tilde{\Gamma}$ on $\tilde{E},$ study the conditions of  existence and uniqueness of a nondegenerate restricted 2-form 
$\tOmega_\mathcal{D}$ such that 
$
\tilde{\Gamma}\hook\tOmega_{\mathcal D}=0,
$
where 
 $\tOmega_{\mathcal D}={\PB d\theta_L}\big|_{\mathcal D}$. All the Lagrangians $L$ satisfying this condition (regular or not) would be solutions of the constrained inverse problem we posed in section \ref{sec3}. 
 
Next, we note that the key expression in the constrained inverse problem, 
$\tGamma\hook\tOmega=\lambda_\alpha\teta^\alpha$ bears a close resemblance to the unconstrained inverse problem condition when known external forces are present, $\Gamma\hook\Omega={F_A}^{ext}\theta^A.$ See also corollary \ref{Phys EL} and section~\ref{dissipative} on dissipative systems. A
 closer examination of the relationship between the Lagrange-D'Alembert equations and physical systems with other types of external forces   seems warranted. 
 
 Finally, the considerations about the regularity of the Lagrangian in sections~\ref{sect3.1.2},~\ref{sect3.2} lead to some interesting questions.  The Lagrangian $\bar{L}$ can be regarded as singular on $E$ but when will it have a regular $k$ matrix on $\tE$ as in example ~\ref{xmpl4}? And following example \ref{sing_L xmpl}, when does a singular Lagrangian on $E$ lead to a regular nonholonomic system on $\tE?$ 
The issue arising from both examples is whether a regular Lagrangian can be recovered from $\bar{L}$ or singular $L$, and how the required conditions are related to the nonholonomic curvatures and the constrained \SODE\ curvatures. 

 We postpone all these matters to a future paper.

\subsection*{Appendix: Redundancy in the Helmholtz conditions} We give a simplified proof of redundancy for the Helmholtz conditions in both the unconstrained and constrained cases.
An explicit original proof can be found in \cite{A03} and the result is implicit in \cite{CPT84}. 
\begin{propn}
For a \SODE\ $\Gamma$ with associated fields $\psi^A$ and $H_B$, and a two-form $\Omega$ on $E$ with $\Omega(V_A,V_B)=0$, the conditions
\[
\Gamma \hook\d\Omega=0,\ \d\Omega(H_A,V_B,V_C)=0
\]
imply
\[
\d\Omega(H_A,H_B,V_C)=0,\ \d\Omega(H_A,H_B,H_C)=0.
\]
\end{propn}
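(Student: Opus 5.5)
The plan is to exploit that $\d\Omega$ is closed, i.e.\ $\d(\d\Omega)=0$, and evaluate this four-form identity on suitable quadruples of frame fields from $\{\Gamma,H_A,V_A\}$. Write $\Xi:=\d\Omega$, a three-form. Expanding $\d\Xi(X_0,X_1,X_2,X_3)=0$ gives the usual sum of derivative terms $X_i(\Xi(\dots))$ plus bracket terms $\Xi([X_i,X_j],\dots)$. The hypotheses say that every component of $\Xi$ containing $\Gamma$ vanishes, and so does $\Xi(H,V,V)$. We also need $\Xi(V,V,V)=0$. This follows directly: $\Xi(V_A,V_B,V_C)$ is a sum of terms $V_A(\Omega(V_B,V_C))$ and terms $\Omega([V_A,V_B],V_C)$. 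The former vanish because $\Omega(V_A,V_B)=0$, and the latter vanish because $[V_A,V_B]=0$. So the only possibly non-zero components are $\Xi(H,H,V)$ and $\Xi(H,H,H)$.

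First, take the quadruple $(\Gamma,V_C,H_A,H_B)$. Every derivative term in which $\Gamma$ appears inside $\Xi$ vanishes. The one derivative term without $\Gamma$ inside $\Xi$ is $\Gamma(\Xi(V_C,H_A,H_B))$. In the bracket terms, a bracket not involving $\Gamma$ still leaves $\Gamma$ as an argument, so those vanish too. What remains is
\[
0=\Gamma\big(\Xi(V_C,H_A,H_B)\big)-\Xi([\Gamma,V_C],H_A,H_B)-\Xi(V_C,[\Gamma,H_A],H_B)-\Xi(V_C,H_A,[\Gamma,H_B]).
\]
Insert the bracket relations $[\Gamma,V_C]=-H_C+\Gamma^D_CV_D$ and $[\Gamma,H_A]=\Gamma^D_AH_D+\Phi^D_AV_D$. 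The $\Phi$ terms give $\Xi(V,V,H)=0$. Set $X_{CAB}:=\Xi(H_A,H_B,V_C)$ and $Y_{ABC}:=\Xi(H_A,H_B,H_C)$. The result is a first-order linear relation along $\Gamma$,
\[
\Gamma(X_{CAB})=(\text{linear in }X)-Y_{CAB}.
\]
This is an ODE relation, so on its own it does not force $X=0$. I therefore need a purely algebraic relation as well.

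Second, for the algebraic relation, use the quadruple $(\Gamma,V_A,V_B,H_C)$. Again all terms with $\Gamma$ inside $\Xi$ drop out, leaving
\[
\Gamma(\Xi(V_A,V_B,H_C))-\Xi([\Gamma,V_A],V_B,H_C)-\Xi(V_A,[\Gamma,V_B],H_C)-\Xi(V_A,V_B,[\Gamma,H_C]).
\]
The first term is zero by hypothesis, and the $V$-parts of the brackets give $\Xi(V,V,\cdot)$-type terms, which vanish. What survives is $\Xi(H_A,V_B,H_C)-\Xi(V_A,H_B,H_C)=0$. So $X_{BAC}$, suitably reindexed, is symmetric under exchanging the vertical slot with one horizontal slot. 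Since $X$ is already skew in its two horizontal slots, this forces $X\equiv0$ by the standard argument that a tensor which is symmetric in one pair of indices and skew in an overlapping pair vanishes (alternately apply the symmetry and the skewness around the three-cycle to obtain $X=-X$). This gives $\d\Omega(H_A,H_B,V_C)=0$. Finally, substituting $X=0$ into the relation from the first step gives $Y_{CAB}=0$, which is $\d\Omega(H_A,H_B,H_C)=0$.

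The main obstacle is the index bookkeeping in the second step. I must check that the symmetric pair and the skew pair genuinely overlap in the way needed for the vanishing argument, and that the $\Gamma^D_C$ terms cancel as claimed. If the $(\Gamma,V,V,H)$ quadruple does not give the needed symmetry cleanly, my fallback is to use the quadruple $(V_A,V_B,H_C,H_D)$ together with $[H_A,V_B]=[H_B,V_A]$ to obtain the symmetry instead.
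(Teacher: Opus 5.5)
Your route is the paper's. Evaluating $\d\Xi=0$ on $(\Gamma,X,Y,Z)$ with $\Gamma\hook\Xi=0$ is precisely the identity $(\lie{\Gamma}\d\Omega)(X,Y,Z)=0$ that the paper uses. The paper applies it to $(H_A,V_B,V_C)$ to get an algebraic symmetry that kills $\d\Omega(H,H,V)$, and then to $(V_A,H_B,H_C)$ to kill $\d\Omega(H,H,H)$.

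There is, however, a sign error in your second step, and it is exactly the sign the vanishing argument depends on. Since $[\Gamma,V_A]=-H_A+\Gamma^D_AV_D$, both $-\Xi([\Gamma,V_A],V_B,H_C)$ and $-\Xi(V_A,[\Gamma,V_B],H_C)$ contribute with a plus sign. The surviving relation is therefore
\[
\Xi(H_A,V_B,H_C)+\Xi(V_A,H_B,H_C)=0,\quad\text{i.e.}\quad \Xi(H_A,V_B,H_C)=\Xi(H_B,V_A,H_C).
\]
In your notation this reads $X_{ABC}=X_{BAC}$. Combined with $X_{ABC}=-X_{ACB}$, the chain $X_{ABC}=X_{BAC}=-X_{BCA}=-X_{CBA}=X_{CAB}=X_{ACB}=-X_{ABC}$ gives $X=0$; this is the paper's index cycling. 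With the minus sign you wrote, you would instead get $X_{ABC}=-X_{BAC}$. Then $X$ would merely be totally skew, and nothing would force it to vanish. Your verbal conclusion (``symmetric'') is correct, but it does not follow from your displayed equation.

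Once the sign is fixed, no fallback is needed. Your last step, setting $X=0$ in the first relation to get $Y_{CAB}=0$, is the same as the paper's.
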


\begin{proof}
$\lie{\Gamma}\d\Omega=\d(\Gamma\hook\d\Omega)=0$ so that
\begin{align*}
\lie{\Gamma}(\d\Omega(H_A,V_B,V_C))&=\d\Omega([\Gamma,H_A],V_B,V_C)+\d\Omega(H_A,[\Gamma,V_B],V_C)+\d\Omega(H_A,V_B,[\Gamma,V_C])\\
&=\d\Omega(H_A,-H_B,V_C)+\d\Omega(H_A,V_B,-H_C)\\
&=-\d\Omega(H_A,H_B,V_C)+\d\Omega(H_A,H_C,V_B)
\end{align*}
using the bracket relations and the assumptions (note that $\Omega(V_A,V_B)=0$\\ implies $\d\Omega(V_A,V_B,V_C)=0$). Using $\d\Omega(H_A,V_B,V_C)=0$ and
cycling through the indices gives
\begin{align*}
&0=-\d\Omega(H_A,H_B,V_C)+\d\Omega(H_A,H_C,V_B),\\
&0=-\d\Omega(H_C,H_A,V_B)+\d\Omega(H_C,H_B,V_A),\\
&0=-\d\Omega(H_B,H_C,V_A)+\d\Omega(H_B,H_A,V_C).
\end{align*}
Hence
\begin{align*}
&\d\Omega(H_A,H_B,V_C)=\d\Omega(H_A,H_C,V_B)=-\d\Omega(H_C,H_B,V_A)=\d\Omega(H_B,H_A,V_C)\\
\implies &\d\Omega(H_A,H_B,V_C)=0.
\end{align*}
Establishing that $\d\Omega(H_A,H_B,H_C)=0$ follows from the assumptions is now straightforward:
\begin{align*}
&\lie{\Gamma}(\d\Omega(V_A,H_B,H_C))\\
&=\d\Omega([\Gamma,V_A],H_B,H_C)+\d\Omega(V_A,[\Gamma,H_B],H_C)+\d\Omega(V_A,H_B,[\Gamma,H_C])\\
&=\d\Omega(-H_A+\Gamma^D_AV_D,H_B,H_C)+\d\Omega(V_A,\Gamma^D_BH_D+\Phi^D_BV_D,H_C)+\d\Omega(V_A,H_B,\Gamma^D_CH_D+\Phi^D_CV_D)\\
&=-\d\Omega(H_A,H_B,H_C)
\end{align*}
using $\d\Omega(V_A,V_B,H_C)=0=\d\Omega(V_A,H_B,H_C)$. Hence the result.
\end{proof}

\subsection*{Acknowledgments}  The authors thank Marta Farr\'e Puiggal\'i for many discussions of the constrained inverse problem and David Saunders for his input on the geometry of constrained \SODEs.  GP acknowledges the Department of Mathematics at the University of Antwerp and Instituto de Ciencias Matem\'aticas (ICMAT) Madrid, for their warm hospitality during the preparation of this paper. TM thanks the Research Fund of the University of Antwerp (BOF) for its support through the DOCPRO projects 46954 and 49747. DMdD acknowledges financial support from the Grants PID2022-137909-NB-C21 and  the “Severo Ochoa Program for Centers of Excellence"  R\&D CEX2023-001347-S funded by MCIN/AEI / 10.13039/501100011033.

\vspace{5cm}

G.E.\,Prince \\
Department of Mathematical and Physical Sciences, La Trobe University,\\
Victoria 3086, Australia \\
Email: \url{g.prince@latrobe.edu.au}\\

T.\, Mestdag\\
University of Antwerp, Department of Mathematics,\\
Middelheimlaan 1, 2020 Antwerpen, Belgium \\
Email: \url{tom.mestdag@uantwerpen.be}\\

D.\,Mart\'{i}n de Diego\\
Instituto de Ciencias Matem\'aticas (CSIC-UAM-UC3M-UCM),\\
C/Nicol\'as Cabrera 13-15, 28049 Madrid, Spain\\
Email: \url{david.martin@icmat.es}

\end{document}